\tikzset{/tikz/notestyleraw/.append style={text=black}}
\newtheorem{thm}{Theorem}
\newtheorem{lem}{Lemma}[section]
\newtheorem{defn}[lem]{Definition}
\newtheorem{prop}[lem]{Proposition}
\newtheorem{que}{Open Problem}
\newtheorem{rmk}[lem]{Remark}
\newcommand{\be}{\begin{eqnarray}}
\newcommand{\ee}{\end{eqnarray}}
\newcommand{\beq}{\begin{equation}}
\newcommand{\eeq}{\end{equation}}
\newcommand{\ben}{\begin{eqnarray*}}
\newcommand{\een}{\end{eqnarray*}}
\newcommand{\beal}{\begin{aligned}}
\newcommand{\enal}{\end{aligned}}
\newcommand{\eps}{\epsilon}
\newcommand{\gm}{\gamma}
\newcommand{\lb}{\lambda}
\newcommand{\T}{\mathbb{T}}
\newcommand{\N}{\mathbb{N}}
\newcommand{\R}{\mathbb{R}}
\newcommand{\Z}{\mathbb{Z}}
\newcommand{\om}{\omega}
\newcommand{\Om}{\Omega}
\newcommand{\Dt}{\Delta}
\newcommand{\cU}{\mathcal{U}}
\newcommand{\cA}{\mathcal{A}}
\newcommand{\cL}{\mathcal{L}}
\newcommand{\cT}{\mathcal{T}}
\newcommand{\wt}{\widetilde }
\title[Variational attraction of the KAM torus for CS-systems]{Variational attraction of the KAM torus for conformally symplectic systems}
\thanks{Email: $\dagger$ jl@njust.edu.cn,\quad $\ddagger$ jellychung1987@gmail.com,\quad $*$zhao\_kai@fudan.edu.en}
\subjclass[2010]{Primary 37J39, 37J51, 37J55; Secondary 70H20}
\keywords{conformally symplectic systems, KAM torus, discounted Hamilton-Jacobi equation, viscosity solution, Lax-Oleinik semigroup, Lagrangian manifolds}
\date{}
\begin{document}
\maketitle

\centerline{\scshape Liang Jin$^\dagger$}

{\footnotesize
\centerline{Department of Applied Mathematics, School of Science}
 \centerline{Nanjing University of Sciences and Technology, Nanjing 210094, China}
}
\bigskip

\centerline{\scshape Jianlu Zhang$^\ddagger$}

{\footnotesize
\centerline{Hua Loo-Keng Key Laboratory of Mathematics \&}
 \centerline{Mathematics Institute, Academy of Mathematics and systems science}
 \centerline{Chinese Academy of Sciences, Beijing 100190, China}
}
\bigskip

\centerline{\scshape Kai Zhao$^*$}

{\footnotesize
\centerline{School of Mathematical Sciences}
\centerline{Fudan University, Shanghai 200433, China}
}
\bigskip

\begin{abstract}
For the conformally symplectic system
\[
\left\{
\begin{aligned}
\dot{x}&=H_p(x,p),\quad(x,p)\in T^*\T^n\\
\dot p&=-H_x(x,p)-\lb p, \quad\lb>0
\end{aligned}
\right.
\]
with a positive definite Hamiltonian, 
we discuss the variational significance of invariant Lagrangian graphs and explain
how the presence of the KAM torus impacts the $W^{1,\infty}-$ convergence speed of the Lax-Oleinik semigroup.
\end{abstract}

\vspace{10pt}


\section{Introduction}\label{s1}
\vspace{10pt}

The earliest research on {\sf conformally symplectic systems} can be found in Duffing's experimental designing book published in 1918, concerning forced oscillations with variable frequency \cite{D}. His work  inspires the creation of modern qualitative theory for dynamical systems, and a bunch of interesting phenomena, e.g. chaos, bifurcation, resonance etc \cite{GH} were found since then, although contemporaries of Duffing have  also noticed these objects, e.g. Poincar\'e \cite{P} and Lyapunov. 
Such a kind of systems have a wide practical prospect, which  can be found in almost all the modern scientific subjects,  e.g. astronomy \cite{CC}, electromagnetics \cite{M}, elastomechanics \cite{MH}, and even economics \cite{L}. The study of invariant Lagrangian submanifolds for dissipative systems, and in particular the existence of KAM tori (i.e., invariant Lagrangian tori on which the motion is conjugate to a rotation), have been deeply investigated in \cite{CCD1,CCD, Ma}. Besides, the PDE viewpoint and variational method provide more viewpoints towards the global dynamics \cite{DFIZ,L,MS}.

\subsection{Conformally symplectic system: Hamiltonian/Lagrangian formalism} For a $C^r$ smooth Hamiltonian function ($r\geqslant 2$) 
\be
H:(x,p)\in T^*\T^n\rightarrow\R
\ee
which satisfies
\begin{enumerate}
  \item[\bf (H1)] {\sf (Positive Definite)} $H_{pp}$ is positive definite everywhere on $T_x^*\T^n$;
  \item[\bf (H2)] {\sf (Superlinear)} $\lim_{|p|_{x}\rightarrow+\infty}H(x,p)/|p|_x=+\infty$, where $|\cdot|_x$ is the Euclidean norm on $T^{*}_{x}\T^n$;
\end{enumerate}
 we introduce a dissipative equation by
\be\label{eq:ode0}
\left\{
\begin{aligned}
\dot{x}&=H_p(x,p),\\
\dot p&=-H_x(x,p)-\lb p.
\end{aligned}
\right.
\ee
Here $\lb>0$ is called the {\sf viscous damping index}, since the flow $\Phi_{H,\lb}^t$ of (\ref{eq:ode0}) transports the standard symplectic form into a multiple of itself, i.e.
\[
(\Phi_{H,\lb}^t)^*dp\wedge d q= e^{\lb t}dp\wedge d q
\]
for $t$ in the valid domain. That is why system \eqref{eq:ode0} is called {\sf conformally symplectic} \cite{WL} or {\sf dissipative} \cite{LC} in related literatures.

The Hamiltonian satisfying (H1)-(H2) is usually called {\sf Tonelli}. If  (H1)-(H2) is guaranteed,  the {\sf Legendre transformation}
\[
\mathcal{L}_H:T^{\ast}M\rightarrow TM;(x,p)\mapsto(x,H_p(x,p))
\]
is a diffeomorphism and endows a {\sf Tonelli Lagrangian}
\be\label{eq:lag}
L(x,v):=\max_{p\in T_{x}^*\T^n}\langle p, v\rangle-H(x,p)
\ee
of which the maximum is obtained at $\bar p\in T_x^*M $ such that $\cL_H(x,\bar p)=(x, v)$. With the help of $L(x,v)$, we can introduce a variational principle
\[
h_\lb^{a,b}(x,y):=\inf_{\substack{\gamma\in C^{ac}([a,b],\T^n)\\\gamma(a)=x ,\ \gamma(b)=y}}\int_a^b e^{\lb s}L(\gamma(s),\dot{\gamma}(s))ds,\quad (a\leqslant b).
\]
Due to the {\sf Tonelli Theorem} and {\sf Weierstrass Theorem} \cite{Mat}, the infimum is always achievable by a $C^r-$curve $\eta:[a,b]\rightarrow \T^2 $ connecting $x$ and $y$, which satisfies the following {\sf Euler-Lagrange equation}:
\[
\frac d{dt}L_v(\eta,\dot\eta)+\lb L_v(\eta,\dot\eta)=L_x(\eta,\dot\eta).\tag{E-L}
\]
Such an $\eta$ is called a {\sf critical curve} of $h_\lb^{a,b}(x,y)$. Moreover, the {\sf Euler-Lagrange flow} $\phi_{L,\lb}^t$ satisfies $\phi_{L,\lb}^t\circ\mathcal{L}_H=\mathcal{L}_H\circ\Phi_{H,\lb}^t$ in the valid time region. As an equivalent substitute, we can explore the dynamics of (\ref{eq:ode0}) via the variational method.

\subsection{Symplectic aspects of the KAM torus} Let $\alpha=pdx$ be the Liouville form on $T^\ast\T^n$ and
\[
\Sigma:=\Big\{\Big(x,P(x)\Big)\,\Big|\,x\in\T^n, P\in C^{1}(\T^n,\R^n)\Big\}
\]
be a $C^1-${\sf Lagrangian} graph, i.e. $i^*\om|_\Sigma=0$ with $\om=d\alpha=dp\wedge dx$. That implies the symplectic form $\om$ vanishes when restricted to the tangent bundle of $\Sigma$. If so, there must be a $c\in H^1(\T^n,\R)$ and some $u\in C^{2}(\T^n,\R)$ such that
\be\label{eq:1}
P(x)=d_{x}u(x)+c.
\ee
Additionally, if $\Sigma$ is $\Phi^{t}_{H,\lambda}-$invariant for all $t\in\R$, we can prove the following:
\begin{thm}[proved in Sec. \ref{s4}]\label{thm:exact}
$c=\mathbf{0} $. In other words, any $\Phi_{H,\lb}^t-$invariant Lagrangian graph $\Sigma$ has to be exact.
\end{thm}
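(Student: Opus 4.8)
The plan is to follow the single cohomology class $c$ along the flow: I will show on one hand that restricting the Liouville form to $\Sigma$ and evolving by $\Phi_{H,\lb}^t$ multiplies its cohomology class by a nontrivial exponential factor, while on the other hand the invariance of $\Sigma$ forces this class to be preserved. First I record that $c$ is exactly the class of the restricted Liouville form. With $i\colon\Sigma\hookrightarrow T^*\T^n$ the inclusion and $P=d_xu+c$, we have
\[
i^*\alpha = \langle P,dx\rangle = \langle d_xu,dx\rangle+\sum_k c_k\,dx_k = du+\sum_k c_k\,dx_k,
\]
so that $[\,i^*\alpha\,]=c\in H^1(\T^n,\R)$ is precisely the class I must prove to vanish.

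Next I would derive the transformation law of $\alpha$ under $\Phi_{H,\lb}^t$. Let $X$ denote the generating vector field of the flow. Cartan's formula gives $\mathscr{L}_X\alpha = d(\iota_X\alpha)+\iota_Xd\alpha$, and a direct computation from \eqref{eq:ode0} yields $\iota_X\alpha=\langle p,H_p\rangle$ and $\iota_X\om=-dH-\lb\alpha$, whence
\[
\mathscr{L}_X\alpha = dS-\lb\alpha,\qquad S:=\langle p,H_p\rangle-H .
\]
Here $S=L\circ\mathcal{L}_H$ is the Lagrangian read off on $T^*\T^n$, and applying $d$ recovers the conformal scaling of $\om$. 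Integrating the linear transport equation $\tfrac{d}{dt}(\Phi_{H,\lb}^t)^*\alpha=(\Phi_{H,\lb}^t)^*\mathscr{L}_X\alpha$ then gives
\[
(\Phi_{H,\lb}^t)^*\alpha = e^{-\lb t}\,\alpha + dF_t,\qquad F_t:=\int_0^t e^{-\lb(t-s)}(\Phi_{H,\lb}^s)^*S\,ds,
\]
an exact correction. Restricting to $\Sigma$ and passing to cohomology kills $dF_t$, so $[\,i^*(\Phi_{H,\lb}^t)^*\alpha\,]=e^{-\lb t}\,c$.

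Finally I would play this against invariance. Since $\Phi_{H,\lb}^t(\Sigma)=\Sigma$, the restriction $\Phi_{H,\lb}^t|_\Sigma$ is a diffeomorphism of $\Sigma$, and $i\circ(\Phi_{H,\lb}^t|_\Sigma)=\Phi_{H,\lb}^t\circ i$ gives $i^*(\Phi_{H,\lb}^t)^*\alpha=(\Phi_{H,\lb}^t|_\Sigma)^*(i^*\alpha)$. The family $\{\Phi_{H,\lb}^s|_\Sigma\}_{s\in[0,t]}$ is a continuous path of self-maps of $\Sigma$ starting at the identity, hence $\Phi_{H,\lb}^t|_\Sigma$ acts trivially on $H^1(\Sigma,\R)$, so $[\,i^*(\Phi_{H,\lb}^t)^*\alpha\,]=[\,i^*\alpha\,]=c$. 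Comparing with the previous step yields $c=e^{-\lb t}c$ for every $t>0$, and since $\lb>0$ this forces $c=\mathbf{0}$.

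The only genuinely delicate point is the homotopy argument of the last step: I must make sure $\Phi_{H,\lb}^t|_\Sigma$ is a well-defined self-map depending continuously on $t$, so that it is isotopic to the identity and therefore induces the identity on cohomology. This is where the \emph{global} invariance of $\Sigma$ for all $t$ enters, together with the fact that the $C^1$ graph $\Sigma$ projects diffeomorphically onto $\T^n$, so the isotopy can equivalently be read on the base as the projected flow, which is homotopic to $\mathrm{id}_{\T^n}$. Everything else is the routine Cartan/ODE computation; in particular the sign of the exponent is immaterial, since the argument only uses that the scaling factor differs from $1$.
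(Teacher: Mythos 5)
Your proof is correct, but it takes a genuinely different route from the paper's. The paper never leaves the base torus: differentiating the invariance identity $\Phi_{H,\lb}^t(x,P(x))=(x(t),P(x(t)))$ at $t=0$ gives the pointwise relation $-H_x(x,P(x))-\lb P(x)=\langle d_xP(x),H_p(x,P(x))\rangle$, from which the function $G(x):=\lb u(x)+H(x,P(x))$ is shown to satisfy $d_xG\equiv-\lb c$; integrating $\partial_{x_i}G$ over a coordinate circle and using periodicity of $G$ kills each $c_i$. You instead work with the Liouville form itself: the transport law $(\Phi_{H,\lb}^t)^*\alpha=e^{-\lb t}\alpha+dF_t$, the identification $[i^*\alpha]=c$, and homotopy invariance of the action on $H^1(\Sigma,\R)$ (valid because the restricted flow is an isotopy from the identity, which is exactly where invariance for \emph{all} $t$ enters) yield $c=e^{-\lb t}c$, hence $c=\mathbf{0}$. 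Both arguments are complete, and they buy different things. Yours is more conceptual and strictly more general: the graph structure is used only to identify $[i^*\alpha]$ with $c$, so the same argument shows that \emph{any} compact Lagrangian submanifold invariant under the flow for all time is exact, graph or not; this is essentially the argument standard in the conformally symplectic literature. The paper's computation is more elementary and produces a byproduct that the paper needs later: once $c=\mathbf{0}$, one gets $d_xG\equiv 0$, i.e.\ $\lb u+H(x,d_xu)$ is constant, which after normalizing $u$ by an additive constant says precisely that $u$ is a classical solution of the stationary equation \eqref{eq:sta-hj}; this feeds directly into Theorem \ref{cor:kam-sol}, whereas your route would still require that extra (short) computation. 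One side remark: your derivation gives $(\Phi_{H,\lb}^t)^*\om=e^{-\lb t}\om$ for the forward flow, while the paper's introduction states the factor $e^{+\lb t}$; your sign is the consistent one for the dissipative (contracting) direction, and, as you correctly note, the discrepancy is immaterial since the argument only needs the factor to differ from $1$.
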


Usually, the existence of such a $\Phi_{H,\lb}^t-$invariant Lagrangian graph can be guaranteed by certain object with quasi-periodic dynamic in the phase space, i.e. the {\sf KAM torus}:

\begin{defn}[KAM torus]
A homologically nontrivial, $C^1-$graphic, $\Phi_{H,\lb}^t-$invariant set is called a {\sf KAM torus} and denoted by $\cT_\om$, if the dynamic on it conjugates to the $\om-$rotation for some $\om\in\R^n$. In other words,
we can find a $C^1-$ embedding map $K:\T^{n}\rightarrow T^*\T^{n}$ expressed by
\[
K(x):=\Big({\zeta(x)},\eta(x)\Big),\quad\forall x\in\T^n
\]    
with $\zeta(x+m)=\zeta(x)+m$ and $\eta(x+m)=\eta(x)$ for any $m\in\Z^n$, such that the following diagram is commutative for all $t\in\R$:
\be\label{diam-com}
\begin{CD}
x\ (mod\ \Z^n)\in\T^n@>\rho_\om^t>>x+\om t\ (mod\ \Z^n)\in \T^n\\
@VV K(\cdot) V @ VV K(\cdot) V\\
K(x)\in T^*\T^n@>\Phi_{H,\lb}^t>> K(x+\om t)\in T^*\T^n.
\end{CD}
\ee
Consequently we have
\[
\cT_\om=\bigg\{\bigg(x, \eta\big(\zeta^{-1}(x)\big)\bigg)\in T^*\T^n\bigg|x\in\T^n\bigg\}.
\]
Technically, $\om\in H_1(\T^n,\R)$ is called the {\sf frequency} of $\cT_\om$.
\end{defn}

\begin{rmk}
Although this definition does not rely on the choice of $\om$, in most occasions we have to make a special selection of that, to make such a $K(\cdot)$ available. Precisely, we call $\om\in\R^n$ {\sf $\tau-$Diophantine}, if there exists $\alpha>0$ such that
\[
|\langle k,\om\rangle|\geq\frac{\alpha}{\|k\|^\tau},\quad\forall \ k=(k_1,\cdots k_n)\in\Z^n\backslash\{0\}
\]
where $\|k\|=|k_1|+|k_2|+\cdots+|k_n|$.
It has been proved in a bunch of papers, e.g., \cite{CCD1,CCD,CC,Ma}, the existence of the KAM torus with a Diophantine frequency for \eqref{eq:ode0}, by using an analogy of the classical KAM iterations.
\end{rmk}

\begin{thm}[proved in Sec. \ref{s4}]\label{thm:lan}
The KAM torus $\cT_\om$ is naturally  a $\Phi_{H,\lb}^t-$invariant Lagrangian graph.
\end{thm}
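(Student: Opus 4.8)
The definition of $\cT_\om$ already builds in that it is a homologically nontrivial $C^1$ graph which is $\Phi_{H,\lb}^t$-invariant, so the only substantive point is to verify that $\cT_\om$ is \emph{Lagrangian}, i.e. that the symplectic form $dp\wedge dx$ restricts to zero on the tangent planes of $\cT_\om$. Since $K:\T^n\to T^*\T^n$ is a $C^1$ embedding onto $\cT_\om$, this is equivalent to showing that the pulled-back two-form $\beta:=K^*(dp\wedge dx)$ vanishes identically on $\T^n$. The plan is to extract a functional equation for $\beta$ from the two structural inputs available to us: the conjugacy encoded in the commutative diagram \eqref{diam-com}, and the conformal scaling $(\Phi_{H,\lb}^t)^*(dp\wedge dx)=e^{\lb t}(dp\wedge dx)$ of the flow.

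The key computation combines these two facts through the naturality of pullback. Writing $\rho_\om^t(x)=x+\om t$ for the translation flow and using $\Phi_{H,\lb}^t\circ K=K\circ\rho_\om^t$, I would compute
\[
(\rho_\om^t)^*\beta=(K\circ\rho_\om^t)^*(dp\wedge dx)=(\Phi_{H,\lb}^t\circ K)^*(dp\wedge dx)=K^*(\Phi_{H,\lb}^t)^*(dp\wedge dx)=e^{\lb t}K^*(dp\wedge dx)=e^{\lb t}\beta.
\]
Here I use that the flow is complete on the compact invariant set $\cT_\om$, so the scaling relation is available for all $t\in\R$. Because $\rho_\om^t$ is a rigid translation it fixes each coordinate differential $dx_i$; hence if we expand $\beta=\sum_{i<j}b_{ij}(x)\,dx_i\wedge dx_j$, the identity above reads $b_{ij}(x+\om t)=e^{\lb t}b_{ij}(x)$ for every $i<j$, every $x\in\T^n$ and every $t$.

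To finish, observe that each $b_{ij}$ is continuous (as $K$ is $C^1$) on the compact torus, hence bounded; letting $t\to+\infty$ (the sign of the exponent is immaterial, as one may instead send $t\to-\infty$), the left-hand side stays bounded while the right-hand side would blow up unless $b_{ij}(x)=0$. Since $\lb>0$ and $x$ is arbitrary, this forces $\beta\equiv 0$, which is exactly the Lagrangian condition for $\cT_\om$. Combined with the graph and invariance properties that are part of the definition, this yields the theorem (and, via Theorem \ref{thm:exact}, that $\cT_\om$ is in fact exact). I expect the only point needing genuine care to be the low regularity: $K$ is merely $C^1$, so $\beta$ is only continuous, and one must check that the pullback identities are legitimate at this regularity and that the boundedness argument is applied pointwise rather than after any smoothing. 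A secondary, harmless point is the notational overlap between the symplectic form and the frequency $\om$, which I keep apart by writing $dp\wedge dx$ explicitly throughout.
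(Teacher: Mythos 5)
Your proposal is correct and follows essentially the same route as the paper: both combine the conjugacy $\Phi_{H,\lb}^t\circ K=K\circ\rho_\om^t$ with the conformal scaling $(\Phi_{H,\lb}^t)^*(dp\wedge dx)=e^{\lb t}\,dp\wedge dx$ to obtain $(\rho_\om^t)^*K^*(dp\wedge dx)=e^{\lb t}K^*(dp\wedge dx)$, and then kill the form by playing boundedness on the compact torus against the exponential factor. Your coordinate expansion $b_{ij}(x+\om t)=e^{\lb t}b_{ij}(x)$ merely makes explicit the limiting step that the paper states in one line, so the two arguments are the same in substance.
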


\subsection{Variational aspects of the KAM torus}

Following the ideas of Aubry-Mather theory or weak KAM theory in \cite{DFIZ,MS}, 
 we can define the {\sf Lax-Oleinik semigroup} operator
\[
\cT_{t}^{-}: C(\T^n,\R)\rightarrow C(\T^n,\R),\quad t\geq0,
\]
by
\be\label{eq:evo-solu}
\cT_{t}^{-}\psi(x):=\inf_{\substack{\gamma\in C^{ac}([-t,0],\T^n)\\\gamma(0)=x}}\Big\{e^{-\lb t}\psi(\gamma(-t))+\int_{-t}^0 e^{\lb \tau}L(\gamma,\dot\gamma)d\tau\Big\}.
\ee
We can prove that $\cT_{t}^{-}\psi(x)$ is the {\sf viscosity solution} (see Definition \ref{defn:vis}) of the {\sf Evolutionary discounted Hamilton-Jacobi equation (EdH-J)}:
 \be\label{eq:evo-hj}
 \left\{
 \begin{aligned}
 &\partial_tu(x,t)+H(x,\partial_x u)+\lb u=0,\\
 &u(x,0)=\psi(x),\quad t\geqslant 0.
 \end{aligned}
 \right.
 \ee
and
 $u^-(x):=\lim_{t\rightarrow+\infty}\cT^{-}_{t}\psi(x)$ exists uniquely as the viscosity solution of the  {\sf Stationary discounted Hamilton-Jacobi equation (SdH-J)}
\be\label{eq:sta-hj}
H(x,\partial_x u^-(x))+\lb u^-(x)=0,
\ee
no matter which $\psi\in C(\T^n,\R)$ is chosen. By the {\sf Comparison Principle}, the viscosity solution of (\ref{eq:sta-hj}) is unique but usually not $C^{1}$. Therefore, a corollary of Theorem \ref{thm:exact} and Theorem \ref{thm:lan} can be drawn:
\begin{thm}
\label{cor:kam-sol}
For $\lb>0$, each KAM torus $\cT_\om$
gives us the unique classic solution $u_\om^-$ of (\ref{eq:sta-hj}) which satisfies $\cT_\om=\textup{Graph}(d_x u_\om^-)$. Consequently, KAM torus is  unique for equation (\ref{eq:ode0}).
\end{thm}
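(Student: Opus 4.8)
The plan is to read the stationary equation \eqref{eq:sta-hj} directly off the dynamical invariance of $\cT_\om$, and then to use $\lb>0$ to pin down the additive constant. First I would record the regularity and exactness: by Theorem \ref{thm:lan} the torus $\cT_\om$ is a $\Phi_{H,\lb}^t$-invariant Lagrangian graph $\Sigma=\{(x,P(x))\}$ with $P\in C^1(\T^n,\R^n)$, and by Theorem \ref{thm:exact} it is exact, so there is some $u\in C^2(\T^n,\R)$ with $P=d_xu$ and $\cT_\om=\mathrm{Graph}(d_xu)$. The goal is then to show that, after adjusting $u$ by a constant, it becomes a classical solution of \eqref{eq:sta-hj}.

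The heart of the argument is to convert $\Phi_{H,\lb}^t$-invariance of the graph into a pointwise identity. Invariance of $\Sigma$ for all $t$ is equivalent to tangency of the vector field $X(x,p)=(H_p(x,p),\,-H_x(x,p)-\lb p)$ to $\Sigma$ along $\Sigma$. Since the tangent space of $\mathrm{Graph}(d_xu)$ at $(x,d_xu(x))$ is spanned by the vectors $(\xi,\,D^2u(x)\,\xi)$ with $\xi\in\R^n$, tangency of $X$ forces
\[
D^2u(x)\,H_p(x,d_xu(x))=-H_x(x,d_xu(x))-\lb\,d_xu(x),\qquad x\in\T^n.
\]
I would then set $G(x):=H(x,d_xu(x))+\lb u(x)$ and differentiate: using symmetry of $D^2u$ together with the displayed identity, every term cancels and $\nabla G\equiv 0$, so $G\equiv E$ is constant on the connected manifold $\T^n$.

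It remains to fix $E$ and to deduce uniqueness. Because $\lb>0$, replacing $u$ by $u_\om^-:=u-E/\lb$ leaves the gradient graph unchanged, so $\cT_\om=\mathrm{Graph}(d_xu_\om^-)$, while $H(x,d_xu_\om^-)+\lb u_\om^-\equiv 0$; thus $u_\om^-\in C^2$ is a genuine classical solution of \eqref{eq:sta-hj}. In particular $u_\om^-$ is a viscosity solution, so by the Comparison Principle (which gives uniqueness of the viscosity solution of \eqref{eq:sta-hj}) we must have $u_\om^-=u^-$, the unique viscosity solution; this also shows $u_\om^-$ is the \emph{unique} classical solution with the prescribed gradient graph. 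Finally, any two KAM tori $\cT_{\om_1},\cT_{\om_2}$ produce classical solutions that both coincide with $u^-$, whence $\cT_{\om_1}=\mathrm{Graph}(d_xu^-)=\cT_{\om_2}$, proving uniqueness of the KAM torus for \eqref{eq:ode0}.

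The computation from invariance to $\nabla G\equiv 0$ is the main step, but it is essentially forced once tangency is written down; the genuinely important point — and the place where the dissipative setting differs fundamentally from conservative Hamilton-Jacobi theory — is that the zeroth-order term $\lb u$ with $\lb>0$ lets us absorb the a priori unknown constant $E$ and normalize the right-hand side to $0$. In the classical case $\lb=0$ this constant is the Ma\~n\'e critical value and cannot be removed; here it is precisely the presence of the discount that makes \eqref{eq:sta-hj} solvable with zero on the right and, via the Comparison Principle, forces uniqueness.
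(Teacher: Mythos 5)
Your proposal is correct and follows essentially the same route as the paper: the tangency identity you derive and the function $G(x)=H(x,d_xu(x))+\lb u(x)$ with $\nabla G\equiv 0$ are precisely the computation appearing in the paper's proof of Theorem \ref{thm:exact}, after which the paper likewise treats the statement as a corollary of Theorems \ref{thm:exact} and \ref{thm:lan}, normalizes the constant using $\lb>0$, and invokes the Comparison Principle for uniqueness. Your explicit remark that the discount term is what allows the constant $E$ to be absorbed (in contrast to the Ma\~n\'e critical value when $\lb=0$) is a nice articulation of the same mechanism the paper uses implicitly.
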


Based on this Theorem
and the convergence of $\cT_{t}^{-}\psi$ for any initial $\psi$, we can perceive that $\cT_\om$ works as an `attractor' to global action minimizing orbits. 
So we prove the following :




\begin{thm}[$C^1-$convergency]\label{thm:2}
For a $C^r-$smooth Hamiltonian $H(x,p)\in T^*\T^n\rightarrow \R$ satisfying (H1)-(H2) and the associated conformally symplectic system \eqref{eq:ode0}, if there exists a $C^1$-graphic KAM torus 
\[
\cT_\om=\Big\{\Big(x,P(x)\Big)\Big| x \in\T^n\Big\}
\]
with the frequency $\om$, then for any function $\psi\in C(\T^n,\R)$, there exists a constant $C=C(\psi,L,\lb)>0$ such that 
\be\label{eq:main-c1}
\|\cT_t^-\psi(x)-u^-_\om(x)\|_{W^{1,\infty}(\T^n,\R)}\leqslant C e^{-\lb t} 
\ee
for all $t\in [0,+\infty) $, where $P(x)=du_\om^-(x)$ for all $x\in\T^n$. 
\end{thm}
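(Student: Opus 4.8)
The plan is to split the $W^{1,\infty}$ bound \eqref{eq:main-c1} into a sup-norm part, handled by the contraction property of the semigroup, and a gradient part, handled by the normal attraction of the torus $\cT_\om$ at rate $\lb$.

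First I would dispose of the $C^0$ estimate. By Theorem~\ref{cor:kam-sol}, $u_\om^-$ is the classical stationary solution of \eqref{eq:sta-hj}, hence a fixed point of the Lax--Oleinik semigroup, $\cT_t^-u_\om^-=u_\om^-$ for all $t\geq0$. Directly from \eqref{eq:evo-solu}, estimating one datum's infimum by the competitor supplied by the other datum's minimizer, the terminal weight $e^{-\lb t}$ yields the contraction $\|\cT_t^-\psi-\cT_t^-\phi\|_\infty\leq e^{-\lb t}\|\psi-\phi\|_\infty$. Taking $\phi=u_\om^-$ gives $\|\cT_t^-\psi-u_\om^-\|_\infty\leq e^{-\lb t}\|\psi-u_\om^-\|_\infty$, which is the sup-norm half of \eqref{eq:main-c1} and is immediate.

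Next I would reduce the gradient part to a statement about orbit momenta. For $t\geq1$ the function $\cT_t^-\psi$ is Lipschitz and uniformly semiconcave, hence differentiable almost everywhere; at a differentiability point $x$ its gradient is the terminal momentum $p_t(0)$ of the backward minimizer $\gamma_t:[-t,0]\to\T^n$ with $\gamma_t(0)=x$, lifted to the orbit $(\gamma_t(s),p_t(s))=\Phi_{H,\lb}^{s}(\gamma_t(-t),p_t(-t))$ of \eqref{eq:ode0}. On the torus side $d_xu_\om^-(x)=P(x)$, realized by the calibrated curve lying on $\cT_\om$. Thus, via Rademacher's theorem and the previous paragraph, the whole bound \eqref{eq:main-c1} follows once I show that the terminal deviation $q_t(0):=p_t(0)-P(\gamma_t(0))$ of every minimizing orbit satisfies $|q_t(0)|\leq Ce^{-\lb t}$.

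The core is then a quantitative attraction estimate for $\cT_\om$. Writing a nearby orbit as $p=P(x)+q$ and using that $P=du_\om^-$ satisfies, by differentiating \eqref{eq:sta-hj}, the identity $H_x(x,P)+DP\,H_p(x,P)+\lb P=0$, the transverse variational equation reads $\dot q=A(x)q+O(|q|^2)$ with $A(x)=-\lb I-\big[\tfrac{d}{dx}H_p(x,P(x))\big]^{\!\top}$. Here $\tfrac{d}{dx}H_p(x,P(x))$ generates exactly the linearization of the tangential flow $\dot x=H_p(x,P(x))$, which by the commutative diagram \eqref{diam-com} is $C^1$-conjugate, through the embedding $K$, to the rigid rotation $\rho_\om^t$; since the derivative of a translation is the identity, the associated fundamental solution is uniformly bounded on $\T^n$. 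Consequently the transverse fundamental solution factorizes as $e^{-\lb s}\Psi(s)$ with $\sup_s\|\Psi(s)\|<\infty$, and a Gronwall argument absorbing the quadratic remainder in a small tubular neighborhood $U$ of $\cT_\om$ gives $|q(s)|\leq Ce^{-\lb(s-s_0)}|q(s_0)|$ for any orbit remaining in $U$ on $[s_0,s]$. This is the step where $\lb>0$, the conformal scaling of $\om$, and the $C^1$ rotation structure are all used, and obtaining the sharp rate $\lb$ (rather than $\lb-\eps$) is the \emph{technical heart}.

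Finally I would combine attraction with the variational selection of $\cT_\om$. By the Tonelli a priori compactness all minimizers for a fixed $\psi$ lie in a compact set $K\subset T^*\T^n$; since $\cT_\om$ attracts the action-minimizing dynamics (as reflected by $u^-=\lim_t\cT_t^-\psi=u_\om^-$ in Theorem~\ref{cor:kam-sol}), there is a uniform time $T_0$ after which any such orbit enters $U$ and, by forward invariance, remains there. Hence a minimizer on $[-t,0]$ enters $U$ at some $s_0\leq -t+T_0$ with $|q_t(s_0)|$ controlled by the size of $U$, so the previous paragraph yields $|q_t(0)|\leq Ce^{-\lb(t-T_0)}\leq C'e^{-\lb t}$, the terminal momentum bound required above. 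Adding the $C^0$ estimate produces \eqref{eq:main-c1}. I expect the two genuine obstacles to be the uniform normal-attraction rate and the uniform capture time $T_0$, i.e.\ upgrading the qualitative global convergence to a quantitative, orbit-wise entry into the basin; the sup-norm step is by contrast automatic from the contraction property.
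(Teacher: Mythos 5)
Your $C^0$ step is correct (the discounted contraction $\|\cT_t^-\psi-\cT_t^-\phi\|_\infty\leqslant e^{-\lb t}\|\psi-\phi\|_\infty$ together with $\cT_t^-u^-_\om=u^-_\om$ is a clean substitute for the paper's direct comparison in item (6) of Proposition \ref{prop-2}), and your reduction of the gradient bound to the terminal momenta of backward minimizers at points of differentiability matches the paper's analysis via $D^*U_\psi$. The trouble is that the two steps you yourself flag as ``obstacles'' are precisely the mathematical content of the theorem, and your proposal does not supply the tool that closes either of them. First, the uniform capture time $T_0$: you justify it by saying that $\cT_\om$ ``attracts the action-minimizing dynamics, as reflected by $u^-=\lim_t\cT_t^-\psi$''. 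This is circular --- the convergence of the \emph{values} $\cT_t^-\psi$ is a pointwise statement about the infima and by itself gives no orbit-wise, $x$- and $t$-uniform statement about when a minimizer's lift enters a prescribed tubular neighborhood $\cU$. What actually makes this quantitative (and what the paper uses, in Lemma \ref{lemma:speed distance}) is the nonnegative substitute Lagrangian \eqref{eq:rec-lan}: the $C^0$ estimate forces $\int e^{\lb\tau}l(\gamma_{x,t},\dot\gamma_{x,t})\,d\tau\leqslant 2C_1e^{-\lb t}$ as in \eqref{eq:estimate l}, and uniform convexity (H1) converts smallness of $l$ into closeness of $\dot\gamma_{x,t}$ to the torus vector field, hence of the momentum to $P(\gamma_{x,t})$. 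Without this (or an equivalent quantitative device) your entry-and-forward-invariance step is an assertion, not a proof.

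Second, the sharp rate: your transverse equation $\dot q=A(x)q+O(|q|^2)$ and the boundedness of the tangential fundamental solution are fine, but a plain Gronwall absorption of the quadratic remainder in a $\delta$-neighborhood yields decay $e^{-(\lb-C\delta)(s-s_0)}$, not $e^{-\lb(s-s_0)}$; recovering the exact exponent $\lb$ requires a bootstrap (feed the preliminary rate back through Duhamel), and there is a further unaddressed error because your coefficient matrix is evaluated along the projected minimizer, which is not an orbit of $\dot x=H_p(x,P(x))$, contributing another $O(|q|)$ term to absorb. The paper avoids this ODE-by-hand route altogether: the conformally symplectic identity forces the normal eigenvalue of $D\Phi^1_{H,\lb}$ along $\cT_\om$ to be \emph{exactly} $e^{-\lb}$ (Proposition \ref{prop:local-stab}), it quotes the two-sided distance estimates \eqref{eq:acctor1} from normal hyperbolicity, and --- crucially --- it does not deduce the momentum bound from the flow estimate alone but from playing the \emph{backward} expansion of the flow against the exponentially small action integral, as in \eqref{eq:act est}. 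So your architecture (forward decay from an entry time) could in principle be completed, but as written both load-bearing steps are gaps, and the missing ingredient in each case is the quantitative action estimate built on \eqref{eq:rec-lan}.
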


\begin{rmk}
\begin{itemize}
\item This conclusion indicates that, the solution of the Cauchy problem \eqref{eq:evo-hj} converges in exponential speed to the solution of the stationary equation \eqref{eq:sta-hj} w.r.t. the $W^{1,\infty}-$norm, under the prior existence of a KAM torus $\cT_\om$. We will see from the proof (in Sec. \ref{s5}), the semiconcavity of $\cT_t^-\psi(x)$ plays a crucial role in the controlling of $\|\partial_x\cT_t^-\psi(x)\|_{L^\infty}$. However, it remains open to get the convergence speed of the semigroup under norms with higher regularity.


\item We should point out that usually $\cT_\om$ is not a {\sf global attractor} and extra invariant sets may exist in the phase space (see Fig. \ref{fig1}). Nonetheless, the KAM torus is the only destination of all the variational minimal orbits as $t\rightarrow+\infty$,  not the extra invariant sets. 
\end{itemize}
\end{rmk}

\subsection{Is the Lagrangian graph variational stable?}

\begin{que}
Does an invariant Lagrangian graph of a conformally symplectic system still persists as an invariant Lagrangian graph under small perturbation?
\end{que}

The answer to this question seems negative, as is shown in Fig. \ref{fig2}, the dissipative property of (\ref{eq:ode0}) leads to a bifurcation of the global attractor. For $\alpha$ equal to the bifurcate value $0.754...$, the Lagrangian graph is a union of homoclinic orbit and a hyperbolic equilibrium (only Lipschitz smooth). If we reduce the value of $\alpha$, the Lagrangian graph disappears and a non-graphic global attractor comes out, as shown in (a) of Fig. \ref{fig2}.

Since the KAM torus is normally hyperbolic, under small perturbation it persists as a $\Phi_{H,\lb}^t-$invariant $C^1-$graph due to the {\sf Invariant Manifold Theorem} \cite{HPS},  although the dynamic on the perturbed torus may no longer conjugate to a rotation. That implies the persistence of Lagrangian graphs is possible with prior KAM assumption:


\begin{figure}
\begin{center}
\includegraphics[width=7cm]{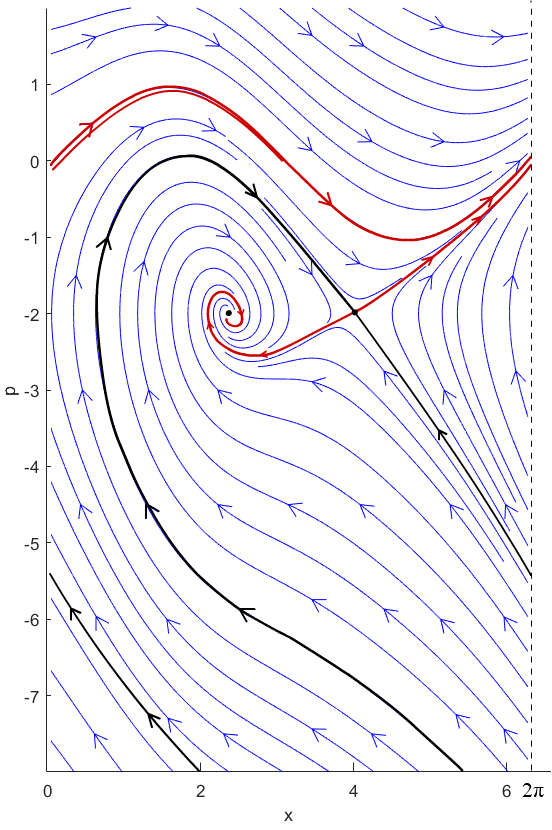}
\caption{Example with a coexistence of KAM torus and fixed points: $H(x,p)=\frac12(p+2)^2-2\sin x+\frac1{\sqrt 2}\cos x+\frac14\cos 2x$, $\lb=\frac1{\sqrt 2}$. The KAM torus equals $\{(x,\sin x)|x\in\T \}$. The maximal global attractor defined in \cite{MS} is marked in red.}
\label{fig1}
\end{center}
\end{figure}


\begin{thm}(proved in Sec. \ref{a1})\label{thm:3}
The KAM torus of a conformally symplectic system keeps to be a $C^1-$Lagrangian graph under small perturbations. 
\end{thm}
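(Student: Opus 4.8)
The plan is to decouple the two assertions implicit in the statement: (i) that the KAM torus $\cT_\om$ survives any sufficiently $C^r$-small perturbation of $H$ as a $\Phi_{H,\lb}^t$-invariant $C^1$ graph $\wt\Sigma$, and (ii) that this persistent graph is again \emph{Lagrangian}. Part (i) is a fairly routine application of the Invariant Manifold Theorem \cite{HPS} once normal hyperbolicity is in hand, while part (ii) is where the conformally symplectic structure must be exploited; I expect (ii) to be the genuine obstacle.

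For (i), I would first verify that $\cT_\om$ is normally hyperbolic, in fact normally attracting. By Theorem \ref{thm:lan} the tangent bundle of $\cT_\om$ is Lagrangian for the symplectic form $\om=dp\wedge dx$, and since the dynamics on $\cT_\om$ conjugates to the rigid rotation $\rho_\om^t$ via $K$, differentiating $\Phi_{H,\lb}^t\circ K=K\circ\rho_\om^t$ shows that $D\Phi_{H,\lb}^t$ restricts to the identity on $T\cT_\om$; hence the $n$ tangential multipliers of the time-$T$ map equal $1$. The damping $-\lb p$ gives $\mathrm{div}\,X=-n\lb<0$, so $\Phi_{H,\lb}^T$ is conformally symplectic with factor $e^{-\lb T}<1$; the eigenvalue pairing $\mu\nu=e^{-\lb T}$ forced by $(\Phi_{H,\lb}^T)^*\om=e^{-\lb T}\om$ together with the Lagrangian-ness of $T\cT_\om$ then pins the $n$ normal multipliers to $e^{-\lb T}<1$. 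Thus $\cT_\om$ is $r$-normally hyperbolic for every $r$ (the tangential dynamics is an isometry of rate $1$, strictly dominated by the normal contraction $e^{-\lb T}$), and \cite{HPS} yields a $\Phi_{H,\lb}^t$-invariant $C^1$ manifold $\wt\Sigma$ that is $C^1$-close to $\cT_\om$; being $C^1$-close to a graph, $\wt\Sigma$ is itself a graph.

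For (ii), set $\sg:=i^*\om$, the pullback of $\om$ to $\wt\Sigma$, and let $g^t$ denote the flow induced on the base $\T^n$ by projecting the (graphic) dynamics on $\wt\Sigma$. Invariance of $\wt\Sigma$ together with the conformal relation gives the eigen-equation $(g^t)^*\sg=e^{-\lb t}\sg$ for all $t$. A purely cohomological reading only shows $\sg$ exact: since $g^t$ is isotopic to the identity it acts trivially on $H^2(\T^n,\R)$, so $[\sg]=e^{-\lb t}[\sg]$ forces $[\sg]=0$. To upgrade exactness to the pointwise vanishing $\sg\equiv0$ I would run the equation backward in time. Because $\wt\Sigma$ is $C^1$-close to $\cT_\om$, on which $g^t$ is conjugate to $\rho_\om^t$, the generating vector field $Y$ of $g^t$ is $C^1$-close to a constant field, so $\|DY\|_\infty=:\eps$ is small and Gronwall gives $\|Dg^t\|\leqslant e^{\eps|t|}$ uniformly. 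Evaluating $(g^{-s})^*\sg=e^{\lb s}\sg$ pointwise and taking norms yields $e^{\lb s}\|\sg_z\|\leqslant\|Dg^{-s}_z\|^2\,\|\sg\|_\infty\leqslant e^{2\eps s}\|\sg\|_\infty$, whence $\|\sg_z\|\leqslant e^{(2\eps-\lb)s}\|\sg\|_\infty\to0$ as $s\to+\infty$ once the perturbation is small enough that $\eps<\lb/2$. Hence $\sg\equiv0$ and $\wt\Sigma$ is a $C^1$-Lagrangian graph.

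The crux, and the step I expect to demand the most care, is the comparison $\lb>2\eps$ in part (ii): the cohomological argument is blind to the actual size of $\sg$, and the conformal contraction of $\om$ can only defeat the (small but nonzero) internal expansion of the perturbed, no-longer-rigid rotation when the damping rate $\lb$ strictly beats twice the internal expansion rate. Making $\eps$ uniformly small requires knowing that the restricted dynamics on $\wt\Sigma$ converges in $C^1$ to the rotation as the perturbation shrinks, which is precisely the $C^r$-persistence of the restricted flow furnished by $r$-normal hyperbolicity; I would devote the technical part of the proof to that uniformity. Exactness of $\wt\Sigma$, if desired, then follows from Theorem \ref{thm:exact}.
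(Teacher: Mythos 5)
Your part (i) is sound and coincides with the paper's own setup: normal hyperbolicity of $\cT_\om$ via the conformal pairing of multipliers is exactly Proposition \ref{prop:local-stab} in the appendix, and persistence of a $C^1$ invariant graph then comes from \cite{HPS}. The genuine gap is in part (ii), at the sentence ``the generating vector field $Y$ of $g^t$ is $C^1$-close to a constant field, so $\|DY\|_\infty=:\eps$ is small''. Conjugacy of the restricted flow to the rotation $\rho_\om^t$ does not make the restricted vector field close to a constant one: the conjugacy $\zeta$ is an essentially arbitrary $C^1$ diffeomorphism, nowhere assumed close to the identity, and the induced field on the base is $Y_0(x)=H_p(x,P(x))$, whose derivative is of order one in general. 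The paper's own example in Fig.~\ref{fig1} makes this quantitative: there $P(x)=\sin x$ and $H=\frac12(p+2)^2+\cdots$, so $Y_0(x)=2+\sin x$ and $\|DY_0\|_\infty=1$, while $\lb=\tfrac{1}{\sqrt2}$; hence the requirement $2\eps<\lb$ fails already for the \emph{unperturbed} torus, and your Gronwall bound gives $\|Dg^{-s}\|^2e^{-\lb s}\leqslant e^{(2-\lb)s}\to\infty$, which proves nothing. What actually controls the tangential growth in the unperturbed case is the conjugacy itself: $\|Dg_0^{t}\|\leqslant\|D\zeta\|_\infty\|D\zeta^{-1}\|_\infty$ uniformly in $t$ (this is precisely why the paper's proof of Theorem \ref{thm:lan} works, $\rho_\om^{-t}$ being an isometry). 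For the perturbed flow no conjugacy to a rotation is available, so to close your argument you must invoke the spectral part of normally hyperbolic invariant manifold theory: for every $\delta>0$ and all sufficiently small perturbations the tangential cocycle satisfies $\|Dg^{-s}\|\leqslant C_\delta e^{\delta s}$ for all $s\geqslant 0$ (continuity of the Mather spectrum, or an adapted-metric argument), and then one chooses $\delta<\lb/2$; the constant $C_\delta$ is harmless as $s\to+\infty$. This is a real additional ingredient, not a consequence of $C^1$-closeness of the manifolds plus Gronwall; as written, the proof fails at this step. (Your sign convention $(g^t)^*\sg=e^{-\lb t}\sg$ is the correct, contracting one, consistent with the paper's appendix.)

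It is worth noting that the paper proves the Lagrangian property by an entirely different, variational route that sidesteps this difficulty: upper semicontinuity of the Aubry set (Proposition \ref{lem:u-semi}) forces $\wt\cA(H_\eps)$ into the attracting neighborhood, hence into $\cT^\eps$; the perturbed viscosity solution $u_\eps^-$ is differentiable a.e., and each point $(x,d_xu_\eps^-(x))$ lies on a backward calibrated orbit asymptotic to $\wt\cA(H_\eps)$, hence lies on the normally attracting manifold $\cT^\eps$; so $\textup{Graph}(d_xu_\eps^-)$ agrees a.e.\ with the $C^1$ graph $\cT^\eps$, forcing $u_\eps^-$ to be a classical solution with $\textup{Graph}(du_\eps^-)=\cT^\eps$, which is therefore exact, hence Lagrangian. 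That route needs no control of tangential expansion rates and yields exactness directly; your geometric contraction argument, once repaired with the spectral-continuity ingredient above, is a legitimate alternative that avoids the weak KAM machinery and extends the mechanism of Theorem \ref{thm:lan} to perturbed dynamics that is no longer conjugate to a rotation.
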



\begin{figure}
\begin{center}
\includegraphics[width=12cm]{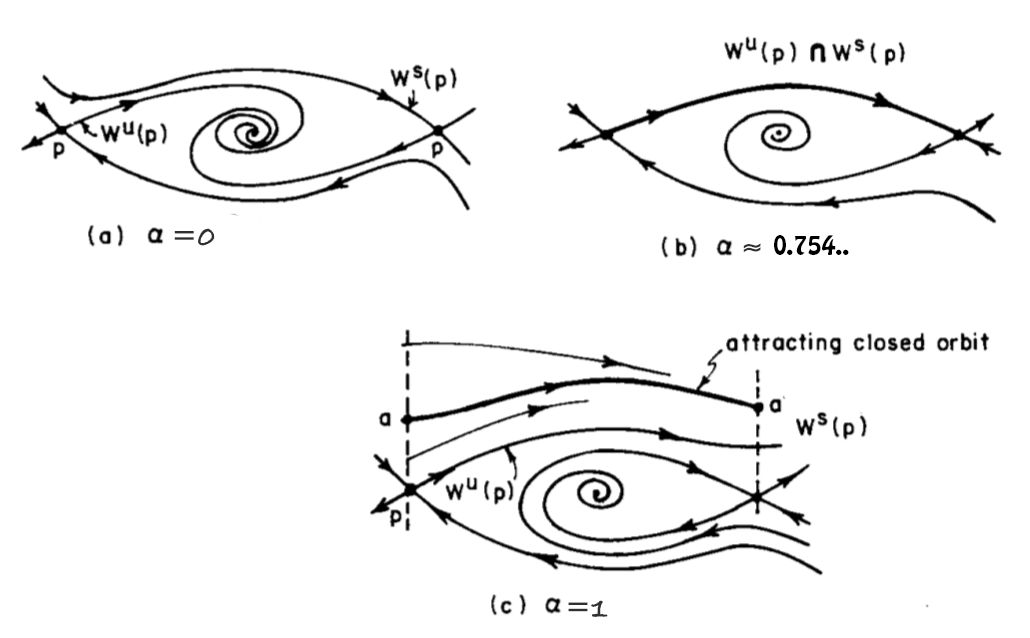}
\caption{Example: $H_\alpha(x,p)=\frac12(p+2\alpha)^2+(\cos x-1)+\big(3-2\alpha +2\sin x-\cos x+\frac1{\sqrt 2}\cos x-\frac14\cos 2x\big)\alpha$, $\lb=\frac1{\sqrt 2}$, $\alpha\in[0,1]$. For $\alpha=0$, the system is actually a dissipative pendulum. For $\alpha=1$, the system has been shown in Fig. \ref{fig1}, with a KAM torus and two fixed points. When $\alpha\approx0.754...$, there would be a bifurcation: we get an invariant torus which is only Lipschitz smooth, and comprises of a hyperbolic equilibrium and its homoclinic orbit. The invariant torus is exact, but the associated viscous solution $u^-(x)$ is only $C^{1,1}$ smooth.}
\label{fig2}
\end{center}
\end{figure}

\noindent{\bf Organization of the article:} The paper is organized as follows: In Sec. \ref{s3}, we give a brief introduction about the the weak KAM theory. In Sec. \ref{s4}, we prove the symplectic properties of the KAM torus, i.e. Theorem \ref{thm:exact} and Theorem \ref{thm:lan}. In Sec. \ref{s5}, we discuss the $C^1-$convergence of the Lax-Oleinik semigroup and prove Theorem \ref{thm:2}. In Sec. \ref{a1} we prove the Lagrangian persistence of the KAM toruus, namely Theorem \ref{thm:3}.
For the consistency of the proof, some longsome and independent conclusions are moved to the Appendix.

\vspace{10pt}

\noindent{\bf Acknowledgements:}  Jianlu Zhang is supported by the National Natural Science Foundation of China (Grant No. 11901560). The authors are grateful to Prof. A Sorrentino for checking the proof of exactness of the Lagrangian graphs and giving constructive suggestions. 

\vspace{20pt}

\section{Weak KAM theory of discounted H-J equations} \label{s3}

\vspace{10pt}

In this section we display a list of definitions and conclusions about the variational principle of system (\ref{eq:ode0}), which can be used in later sections.

%

\begin{defn}[Viscosity solution]\label{defn:vis}
\begin{itemize}
	\item [(1)]A function $u:\T^n\to \R $ is called a {\sf viscosity subsolution} (resp. {\sf viscosity supersolution}) of equation \eqref{eq:sta-hj}, if for every $C^1$ function $\varphi:\T^n \rightarrow \R$ and every point $x_0\in \T^n$ at which $u- \varphi $ reaches a local maximum (resp. minimum) , we have 
	$$
	  H(x_0,\partial_x \varphi(x_0))+\lambda u(x_0) \leqslant 0 \quad ( resp. \geqslant 0 );
	$$
	\item[(2)]A function  $u:\T^n\to \R $ is called a {\sf viscosity solution} of equation \eqref{eq:sta-hj}, if it is both a
viscosity subsolution and a viscosity supersolution.
    \item[(3)] Similarly, a function $U:\T^n\times [0,+\infty)\rightarrow\R$ can be defined by the viscosity subsolution, viscosity supersolution or viscosity solution of equation \eqref{eq:evo-hj}, if aforementioned items holds in the interior region $(0,+\infty)\times M$ for $U$ respectively.

\end{itemize}
	
\end{defn}

\begin{prop}\label{prop-1}
\begin{enumerate}
\item {\sf (Variational principle)}   For each $\psi \in C(\T^n, \R) $, each $x\in \T^n $ and each $t\geqslant 0 $, we can find a $C^2$ smooth curve $\gamma_{x,t}:\tau\in[-t,0]\rightarrow\T^n $ ending with $x$ such that
  \begin{align*}
  	\cT^-_{t}\psi(x)=\, e^{-\lb t}\psi(\gamma_{x,t}(-t))+\int _{-t}^0 e^{\lb \tau } L(\gamma_{x,t}(\tau ),\dot \gamma_{x,t}(\tau) )  d \tau.
  \end{align*}
Moreover, $\cL_H^{-1}(\gamma_{x,t}(\tau),\dot\gamma_{x,t}(\tau))$ satisfies (\ref{eq:ode0}) for $\tau\in(-t,0)$.
 \item {\sf (Pre-compactness)} for any $\psi\in C(\T^n,\R)$ and any $t\geqslant 1$, there exists a constant $K:=K(L,\lambda)>0$ depending only on $L$ and $\lambda$, such that the minimizing curve $\gamma_{x,t}$ achieving $\cT_{t}^-\psi$ satisfies $|\dot\gamma_{x,t}(\tau)|\leqslant K$ for all $\tau\in(-t,0)$.
\item {\sf (Viscosity solution)} Suppose $U_{\psi}(x,t):=\cT_{t}^-\psi(x)$, then it is a viscosity solution of the EdH-J equation (\ref{eq:evo-hj}).
\end{enumerate}
\end{prop}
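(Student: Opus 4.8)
The plan is to read all three items as standard outputs of Tonelli calculus of variations and dynamic programming, applied to the time--weighted Lagrangian $\tilde L(\tau,x,v):=e^{\lb\tau}L(x,v)$. For item (1), I would first note that $\tilde L$ inherits the Tonelli structure of $L$: for each fixed $\tau$ the weight $e^{\lb\tau}>0$ preserves positive definiteness of $\tilde L_{vv}=e^{\lb\tau}L_{vv}$ and superlinearity in $v$, while the $\tau$--dependence is $C^r$ and, on the compact interval $[-t,0]$, the weight stays pinned between $e^{-\lb t}$ and $1$. Hence the direct method (lower semicontinuity of the action plus coercivity from (H2)), together with the Tonelli and Weierstrass theorems quoted in the introduction, furnishes a minimizer $\gamma_{x,t}$ of \eqref{eq:evo-solu}, and interior regularity upgrades it to a $C^2$ (indeed $C^r$) extremal. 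Writing the Euler--Lagrange equation for $\tilde L$ as
\[
\frac{d}{d\tau}\big(e^{\lb\tau}L_v(\gamma,\dot\gamma)\big)=e^{\lb\tau}L_x(\gamma,\dot\gamma)
\]
and cancelling $e^{\lb\tau}$ recovers the discounted equation (E-L); applying $\cL_H$ then turns it into the conformally symplectic system \eqref{eq:ode0}, as required.

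For item (2) I would establish the a priori speed bound in the usual two steps. An upper bound for $\cT^-_t\psi(x)$ comes from inserting a fixed comparison curve (for instance a constant--speed segment); since $\psi$ is bounded on the compact $\T^n$ and $L$ is bounded below, this yields a bound on $\int_{-t}^0 e^{\lb\tau}L(\gamma,\dot\gamma)\,d\tau$ uniform for $t\ge1$, and superlinearity (H2) then forces the average speed to be controlled. To pass from an averaged to a pointwise bound I would track the energy $E(\tau):=H(\gamma(\tau),L_v(\gamma,\dot\gamma))$ along the extremal; a direct computation using \eqref{eq:ode0} gives $\frac{d}{d\tau}\big(e^{\lb\tau}E\big)=-\lb e^{\lb\tau}L$, so $e^{\lb\tau}E$ obeys a closed, forced evolution whose right--hand side is already controlled by the action bound. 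Thus once $E$ is bounded at a single time (available from the averaged bound via a measure argument), it stays bounded on $(-t,0)$; superlinearity of $H$ in $p$ then bounds $|L_v|$, and Legendre duality bounds $|\dot\gamma_{x,t}|$ by a constant $K(L,\lb)$.

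For item (3) the engine is the dynamic programming (semigroup) identity
\[
U(x,t)=\inf_{\gamma(0)=x}\Big\{e^{-\lb s}U(\gamma(-s),t-s)+\int_{-s}^0 e^{\lb\tau}L(\gamma,\dot\gamma)\,d\tau\Big\},\qquad 0<s<t,
\]
which follows by splitting a curve at time $-s$ and reparametrising the tail, the bookkeeping of the discount being exactly the factor $e^{\lb s}$. Granting this, the subsolution inequality is obtained by testing with constant--velocity curves $\gamma(\tau)=x_0+v\tau$: at a local maximum $(x_0,t_0)$ of $U-\varphi$, normalising $U(x_0,t_0)=\varphi(x_0,t_0)$ and using $U\le\varphi$ gives, after dividing by $s$ and letting $s\to0^+$,
\[
\partial_t\varphi+\lb\varphi+\langle\partial_x\varphi,v\rangle-L(x_0,v)\le 0\qquad\text{for every }v,
\]
so taking the supremum over $v$ produces the Legendre transform $H(x_0,\partial_x\varphi)$ and hence $\partial_t\varphi+H(x_0,\partial_x\varphi)+\lb\varphi\le0$. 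The supersolution inequality is dual: at a local minimum I would feed the optimal curve from item (1) into the identity, use $U\ge\varphi$, rewrite the boundary difference as $\int_{-s}^0\frac{d}{d\tau}\big(e^{\lb\tau}\varphi(\gamma,t_0+\tau)\big)\,d\tau$, and bound $\langle\partial_x\varphi,\dot\gamma\rangle-L\le H(\gamma,\partial_x\varphi)$ pointwise before averaging and sending $s\to0^+$.

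I expect item (2) to be the genuine obstacle. In the dissipative regime energy is not conserved, so the pointwise speed bound cannot simply be read off from a conserved quantity and must instead be extracted from the forced evolution of $E$, with care taken that the uniform constant is available only once $t$ is bounded away from $0$ (hence the hypothesis $t\ge1$). Items (1) and (3) should then be routine once the minimizers and their regularity from item (1) are in hand, since the supersolution step relies on plugging an actual minimizer into the dynamic programming identity.
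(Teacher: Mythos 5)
Your items (1) and (3) are correct and essentially identical to the paper's own proof: (1) is the direct method plus the Tonelli--Weierstrass theorems, and (3) is the standard dynamic-programming argument (constant-velocity test curves for the subsolution, the minimizer itself for the supersolution). The genuine gap is in item (2), exactly the step you flagged as the obstacle. Your plan is: global weighted action bound $\int_{-t}^0 e^{\lb\tau}L(\gamma,\dot\gamma)\,d\tau\leqslant A(\psi,L,\lb)$, an anchor time from a measure argument, then propagation of the energy $E(\tau)=H(\gamma,L_v(\gamma,\dot\gamma))$ via the (correct) identity $\frac{d}{d\tau}\bigl(e^{\lb\tau}E\bigr)=-\lb e^{\lb\tau}L$. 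Both halves lose an exponential factor in the far past. First, the global bound only controls the \emph{discounted} speed: since $L\geqslant k|v|-l$, on a window $[-m-1,-m]$ it yields only $\int_{-m-1}^{-m}|\dot\gamma|\,d\tau\leqslant e^{\lb(m+1)}(A+l/\lb)/k$, so the measure argument produces anchor points with uniformly bounded speed only near $\tau=0$, not in every window. Second, integrating the energy identity backward from an anchor $\tau_0$ gives $E(\tau)=e^{\lb(\tau_0-\tau)}E(\tau_0)-\lb e^{-\lb\tau}\int_{\tau_0}^{\tau}e^{\lb s}L\,ds$, so every bound propagated backward is multiplied by $e^{\lb(\tau_0-\tau)}$, which is unbounded as $\tau\to-t$ when $t$ is large. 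Your scheme therefore yields at best $|\dot\gamma_{x,t}(\tau)|\leqslant K e^{\lb|\tau|}$, not a uniform $K$, and no choice of anchor fixes this using the global action bound alone.

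The missing idea---and the crux of the paper's proof---is localization by the semigroup (dynamic programming) identity, which you state in item (3) but never use in item (2). Partition $[-t,0]$ into windows $[-\sigma_i,-\sigma_{i-1}]$ of length in $[1/2,1]$. By the semigroup property, the restriction of $\gamma_{x,t}$ to each window is a fixed-endpoint minimizer of the localized problem, so comparing it with the straight line joining its own endpoints (whose speed is at most $2\,\mathrm{diam}(\T^n)$ because the window has length $\geqslant 1/2$) gives $\int e^{\lb\tau}L(\gamma,\dot\gamma)\,d\tau\leqslant C\int e^{\lb\tau}\,d\tau$ with $C$ depending only on $L$. Inside a unit window the weights $e^{\lb\tau}$ are comparable up to the fixed factor $e^{\lb}$, so this localized bound---unlike the global one---gives a uniform average-speed bound, hence an anchor point with uniformly bounded speed in \emph{every} window; the Euler--Lagrange flow (or your energy identity, now run only across a unit window, where the exponential loss is at most $e^{\lb}$) then upgrades it to the uniform pointwise bound. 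This also explains why the paper's constant $K$ depends only on $(L,\lb)$ and not on $\psi$: the value-function terms cancel in the fixed-endpoint comparison. Finally, the hypothesis $t\geqslant 1$ is there to allow windows of length $\geqslant 1/2$; the difficulty in your argument is large $t$, not small $t$ as your closing remark suggests.
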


\begin{proof}

For assertion (1), by taking 
\be\label{eq:h^t}
h_\lambda^t(y,x):=\inf_{\substack{\gamma\in C^{ac}([-t,0],\T^n)\\ \gamma(-t)=y \ \gamma(0)=x }} \int_{-t}^0 e^{\lb \tau}L(\gamma,\dot\gamma)\ d\tau  ,
\ee
we get a simplified expression 
 $$
 \cT_{t}^-\psi(x) =\inf_{y\in \T^n} \{ e^{-\lb t}\psi(y)+ h_\lambda^{t}(y,x)\}.
 $$
 Since the function $y \mapsto e^{-\lb t}\psi(y)+ h_\lambda^{t}(y,x) $ is continuous on $\T^n$, we can find $y_{x,t} \in \T^n$ such that $\cT_{t}^-\psi(x) = e^{-\lb t}\psi(y_{x,t})+ h_\lambda^t(y_{x,t},x) $. Due to the Tonelli Theorem, the infimum of $h^t_\lambda(y,x)$ in \eqref{eq:h^t} is always achievable, and has to be $C^2-$smooth due to the Weierstrass Theorem \cite{Mat}. Hence, we can find a  $C^2$ smooth curve $\gamma_{x,t}:[-t,0] \to \T^n $ with $\gamma_{x,t}(-t)=y_{x,t} $ and $\gamma_{x,t}(0)=x $ such that
$$
h_\lambda^{t}(y_{x,t},x)=\int _{-t}^0 e^{\lb \tau }  L(\gamma_{x,t}(\tau ),\dot \gamma_{x,t}(\tau) ) \ d \tau .
$$

For assertion (2), as we know, for any $\psi\in C^0(\T^n,\R)$ and $t \geqslant 1$,we choose $0<\sigma<t$ such that $ \frac{1}{2} \leqslant \sigma \leqslant 1 $ , due to item (1), there exists a a  $C^2$ smooth curve $\gamma_{x,t}:[-t,0] \mapsto \T^n $ with  $\gamma_{x,t}(0)=x $ and due to   $\cT^-_t$ is a semigroup operator,
\begin{align*}
	\cT^-_{t}\psi (x)=&\,  \inf_{y\in \T^n} \{ e^{-\lb t}\psi (y)+ h_\lambda^{t}(y,x)\} \\
	=&\,\inf_{z\in\T^n } \{ e^{-\lb \sigma } \big( \cT^-_{t-\sigma} \psi  (z) \big) +h_\lambda^{\sigma}(z,x) \}\\
	=&\, e^{-\lb \sigma }  \cT^-_{t-\sigma} \psi  (\gamma_{x,t}(-\sigma)) +h_\lambda^{\sigma}(\gamma_{x,t}(-\sigma ),\gamma_{x,t}(0))  \\
	=&\, e^{-\lb \sigma }  \cT^-_{t-\sigma} \psi  (\gamma_{x,t}(-\sigma))+\int_{-\sigma}^0 e^{\lb \tau}  L(\gamma_{x,t}(\tau ),\dot \gamma_{x,t}(\tau) ) d \tau
\end{align*}
By chosen $\eta:[-\sigma,0] \to \R$ being the straight line connecting $\gamma_{x,t}(-\sigma)$ and $ \gamma_{x,t}(0) $, we have
\begin{align*}
	h_\lambda^{\sigma }(\gamma_{x,t}(\sigma ),\gamma_{x,t}(0)) \leqslant &\, \int_{-\sigma}^0 e^{ \lb \tau}  L(\eta(\tau ),\dot \eta(\tau) ) d \tau \\
 \leqslant &\,  C\int_{-\sigma}^0 e^{\lambda \tau } d \tau
\end{align*}
for a suitable constant $C$ depending only on $L(x,v)$ with $|v|\leqslant diam(\T^n)$. On the other hand, there exists constants $k, l>0$ such that $L(x,v)\geqslant k|v |-l $ for all $(x,v)\in T\T^n$, then
\begin{align*}
	h_\lambda^{\sigma}(\gamma_{x,t}(-\sigma ),\gamma_{x,t}(0)) \geqslant &\,\int_{-\sigma}^0 e^{\lambda \tau}(k|\dot \gamma_{x,t}|-l)d \tau \\
	\geqslant &\, k\int_{-\sigma}^0 e^{\lambda \tau} |\dot \gamma_{x,t}|d \tau-l\int_{-\sigma}^0 e^{\lambda \tau} d \tau.
\end{align*}
Hence, we have 
$$
\int_{-\sigma}^0 |\dot \gamma_{x,t}| d\tau \leqslant \frac{l+C}{ke^{-\lambda \sigma}} \int_{-\sigma}^0 e^{\lambda \tau} d \tau = \frac{l+C}{k\lambda } (e^{\lambda \sigma }-1).
$$
There always exists a $t'\in [-\sigma,0]$ such that $|\dot \gamma_{x,t}(t')|\leqslant \frac{l+C}{k\lambda } \frac{e^{\lambda \sigma}-1}{\sigma} $. Since $ \gamma_{x,t}$ satisfies the (E-L), then $|\dot \gamma_{x,t}(\tau)| $ is uniformly bounded on $\tau\in [-\sigma,0]$.

 Choose $-t\leqslant -\sigma_N\leqslant -\sigma_{N-1}\leqslant \cdots \leqslant -\sigma_1\leqslant -\sigma_0=-\sigma\leqslant 0$ such that $\frac{1}{2}\leqslant \sigma_i-\sigma_{i+1}\leqslant 1$, then for any $1\leqslant i\leqslant N$ we get
$$
\cT^-_{t-\sigma_{i-1}}\psi (\gamma_{x,t}(-\sigma_{i-1}))=e^{\lb(\sigma_{i-1}-\sigma_i) }  \cT^-_{t-\sigma_i} \psi  (-\gamma_{x,t}(-\sigma_{i} ))+\int_{-\sigma_i}^{-\sigma_{i-1}} e^{\lb \tau}  L(\gamma_{x,t}(\tau ),\dot \gamma_{x,t}(\tau) ) d \tau,
$$
the same scheme as above still works. So $|\dot \gamma_{x,t}(\tau)| $ is uniformly bounded on $\tau\in [-t,0]$.
\medskip

For assertion (3), it's a classical conclusion in the Optimal Control Theory (e.g. \cite[Chapter III]{Ba}) that $U_{\psi}(x,t):=\cT_{t}^-\psi(x)$ is a continuous viscosity solution of \eqref{eq:sta-hj}. Here we give a sketch: 

To prove $ U_{\psi}(x,t)=\cT_{t}^-\psi(x)$ is a subsolution; for any fixed $(x_0,t_0)\in   \T^n \times (0,+\infty)$. Let $\varphi$ be a $C^1$ test function such that $(x_0,t_0)$ is a local maximal point of $U_{\psi}-\varphi$ and $U_{\psi}(x_0, t_0)=\varphi (x_0,t_0 ) $. That is
\be \label{eq:subsolution}
\varphi(x_0,t_0)-\varphi(x,t)\leqslant U_{\psi}(x_0,t_0)-U_{\psi}(x,t),\quad (x,t)\in W,
\ee
	with $W$ be an open neighborhood of $(x_0,t_0)$ in $\T^n \times (0,+\infty)$.
	
	Due to item(1) and $\cT_t^-$ is a semigroup operator, for any differentiable point $x_2 \in W$, we have that $\cT_{t_2-t_1}^- \circ \cT_{t_1}^- \psi(x_2) = \cT_{t_2}^- \psi(x_2)$ for any $t_1<t_2$ in $W$ which implies that 
	\begin{align*}
		e^{\lb t_2} U_{\psi}(x_2,t_2)- e^{\lb t_1} U_{\psi}(x_1,t_1)\leqslant \,\int^{t_2}_{t_1}  e^{\lb \tau }  L(\gamma,\dot{\gamma} )  d \tau.
	\end{align*}
for any $C^1$ curve $\gamma:[t_1,t_2]\to \T^n$ connecting $x_1$ to $x_2$. By \eqref{eq:subsolution} it follows
	\begin{align*}
		\varphi(t_2,x_2)-\varphi(t_1,x_1) \leqslant \int^{t_2}_{t_1}  e^{\lb (\tau-t_2) }  L(\gamma,\dot{\gamma}) d \tau  -(1 -e^{\lb( t_1-t_2) })U_{\psi}(x_1,t_1).
	\end{align*}
	By letting $|t_2-t_1|\to 0$, this gives rise to
	\begin{align*}
		\partial_t \varphi(x_0,t_0)+\partial_x\varphi(x_0,t_0)\cdot \dot \gamma(t_0) \leqslant L(x_0,\dot \gamma(t_0) )-\lb U_\psi (x_0,t_0) 
	\end{align*}
	As an application of Fenchel-Legendre dual, we obtain
	\begin{align*}
		\partial_t \varphi(x_0,t_0)+H(x_0,\partial_x\varphi(x_0,t_0))+\lb U_\psi (x_0,t_0) \leqslant 0,
	\end{align*}
	which shows that $U_{\psi}$ is a subsolution.

Now we turn to the proof that $U_{\psi}$ is a supersolution. Let $\varphi$ be a $C^1$ test function such that $(x_0,t_0)$ is a local minimal point of $u-\varphi$ and $U_{\psi}(x_0,t_0)=\varphi(x_0,t_0)  $ . That is
	\begin{align*}
		\varphi(x_0,t_0)-\varphi(x,t)\geqslant U_{\psi}(x_0,t_0)-U_{\psi}(x,t),\quad (x,t)\in V,
	\end{align*}
	with $V$ be an open neighborhood of $(x_0,t_0)$ in $\T^n$. There exists a $C^2$ curve $\xi:[t, t_0]\to V$ with $\xi(t_0)=x_0$ such that
	\begin{align*}
		U_{\psi}(\xi(t_0),t_0)-U_{\psi}(\xi(t),t)=\int^{t_0}_{t} L(\xi,\dot{\xi} )-\lb U_{\psi}(\xi(s),s)  \ ds.
	\end{align*}
	Hence
	\begin{align*}
		\varphi(x_0,t_0)-\varphi(\xi(t),t )\geqslant\int^{t_0}_{t} L(\xi,\dot{\xi} )-\lb U_{\psi}(\xi(s),s)  \ ds,
	\end{align*}
	It follows that
	\begin{align*}
		\partial_t \varphi (x_0,t_0)+\partial_x\varphi (x_0,t_0)\cdot\dot{\xi}(t_0)\geqslant L(x_0,\dot{\xi}(t_0) )-\lb U_\psi (x_0,t_0) ,
	\end{align*}
	which implies
	\begin{align*}
		\partial_t \varphi(x_0,t_0)+H(x_0,\partial_x\varphi(t_0,x_0))+\lb  U_\psi(x_0,t_0) \geqslant 0.
	\end{align*}
	So we finally finish the proof.
\end{proof}

\begin{prop}\label{prop-2}
 \begin{enumerate}
 \item {\sf (Expression)} \cite[Theorem 6.1]{DFIZ}  For all $\psi\in C^0(M,\R)$, the limit of $\cT_{t}^-\psi(x)$ exists and can be explicitly expressed, i.e.
 \be\label{eq:sta-solu}
\lim_{t\rightarrow+\infty}\cT_{t}^-\psi =\inf_{\substack{\gamma\in C^{ac}((-\infty,0],\T^n)\\\gamma(0)=x}}\int_{-\infty}^0 e^{\lb\tau}L(\gamma,\dot\gamma)d\tau.
\ee
Since the limit is independent of $\psi$, we can denote it by $u^-(x)$.
\item {\sf (Domination)} \cite[Proposition 6.3]{DFIZ} For any absolutely continuous curve $\gamma:[a,b]\rightarrow\T^n$ connecting  $x, y\in\T^n$, we have
\be
e^{\lb b}u^-(y)-e^{\lb a}u^-(x)\leqslant \int_a^b e^{\lb\tau}L(\gamma,\dot\gamma)d\tau.
\ee
\item {\sf (Calibration)}\cite[Proposition 6.2]{DFIZ} For any $x\in\T^n$, there exists a $C^r$ {\sf backward calibrated }curve $\gamma_{x}^-:(-\infty,0]\rightarrow\T^n$ ending with $x$, such that for all $s\leqslant t\leqslant 0$, we have
\be
e^{\lb t}u^-(\gamma_{x}^-(t))-e^{\lb s}u^-(\gamma_{x}^-(s))=\int_s^t e^{\lb\tau}L(\gamma_{x}^-,\dot\gamma_{x}^-)d\tau.
\ee
Similarly, $\cL_H^{-1}(\gamma_{x}^-(\tau),\dot\gamma_{x}^-(\tau))$ satisfies (\ref{eq:ode0}) for $\tau\in(-\infty,0)$.
\item {\sf (Pre-compactness)}\cite[Proposition 6.2]{DFIZ} There exists a constant $K>0$ depending only on $L$,  such that the minimizing curve $\gamma_{x}^-$ of $u^-(x)$ satisfies $|\dot\gamma_{x}^-(\tau)|\leqslant K$, for all $\tau\in(-\infty,0)$.
\item  \cite[Proposition 5,6]{MS} Along each calibrated curve $\gamma_{x}^-:(-\infty,0]\rightarrow \T^n$, we have
\[
\lb u^-(\gamma_x^-(s))+H(\gamma_x^-(s), d_xu^-(\gamma_x^-(s)))=0,\quad\forall s<0.
\]
\item {\sf ($C^0-$convergent speed)}
	 Let $U_{\psi}(x,t):=\cT_t^-\psi$ be the viscosity solutions of the equation  \eqref{eq:evo-hj}, there exists a constant $C_1=C_1(\psi,L,\lambda)$, such that
	$$
	\big\| U_{\psi}(x,t)-u^-(x) \big\| \leqslant  C_1e^{-\lambda t},\quad\forall t\geqslant 1.
	$$
\item {\sf (Stationary solution)} $u^-(x)$ is a viscosity solution of (\ref{eq:sta-hj}).
 \end{enumerate}
 \end{prop}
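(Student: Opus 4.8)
The plan is to verify the two inequalities of Definition \ref{defn:vis}(1) for $u^-$, i.e. that it is simultaneously a viscosity sub- and supersolution of \eqref{eq:sta-hj}. My preferred route bypasses any new variational estimate and instead recycles the evolutionary solution already produced in Proposition \ref{prop-1}(3) together with the convergence in Proposition \ref{prop-2}(6). The first step is to observe that $u^-$ is a fixed point of the semigroup, $\cT_s^-u^-=u^-$ for every $s\geqslant 0$. This follows because $\cT_s^-$ is a contraction on $(C(\T^n,\R),\|\cdot\|_\infty)$: for any two data $\psi_1,\psi_2$ the integral terms in \eqref{eq:evo-solu} cancel over a common competitor curve, leaving $\|\cT_s^-\psi_1-\cT_s^-\psi_2\|_\infty\leqslant e^{-\lb s}\|\psi_1-\psi_2\|_\infty$. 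Hence $\cT_s^-u^-=\cT_s^-\big(\lim_{t\to\infty}\cT_t^-\psi\big)=\lim_{t\to\infty}\cT_{s+t}^-\psi=u^-$, where the semigroup property $\cT_s^-\circ\cT_t^-=\cT_{s+t}^-$ is used. Consequently the time-independent function $U(x,t):=u^-(x)$ coincides with $\cT_t^-u^-(x)$ and is therefore, by Proposition \ref{prop-1}(3), a viscosity solution of the evolutionary equation \eqref{eq:evo-hj}.

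The second step turns the space-time inequalities for $U$ into the stationary ones for $u^-$ by testing against functions that do not depend on time. Given $\varphi\in C^1(\T^n,\R)$ and a point $x_0$ where $u^--\varphi$ attains a local maximum, the function $\tilde\varphi(x,t):=\varphi(x)$ is $C^1$ on $\T^n\times(0,\infty)$ and $(x_0,t_0)$ is a local maximum of $U-\tilde\varphi$ for every fixed $t_0>0$. The subsolution property of $U$ for \eqref{eq:evo-hj} then reads $\partial_t\tilde\varphi(x_0,t_0)+H(x_0,\partial_x\tilde\varphi(x_0,t_0))+\lb U(x_0,t_0)\leqslant 0$; since $\partial_t\tilde\varphi\equiv 0$, $\partial_x\tilde\varphi=\partial_x\varphi$ and $U(x_0,t_0)=u^-(x_0)$, this is exactly $H(x_0,\partial_x\varphi(x_0))+\lb u^-(x_0)\leqslant 0$. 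Repeating the argument at a local minimum and using the supersolution property yields the reverse inequality, so $u^-$ is a viscosity solution of \eqref{eq:sta-hj}.

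As an independent and more hands-on verification, useful as a fallback and also the route suggested by the way Proposition \ref{prop-2} is organized, the subsolution inequality follows directly from the Domination estimate in Proposition \ref{prop-2}(2): touching $u^-$ from above by $\varphi$ at $x_0$, applying domination along a short curve with $\gamma(0)=x_0$ and $\dot\gamma(0)=v$, dividing by the elapsed time and letting it shrink to zero gives $\lb u^-(x_0)+\partial_x\varphi(x_0)\cdot v\leqslant L(x_0,v)$; taking the supremum over $v$ and invoking the Fenchel-Legendre duality \eqref{eq:lag} between $L$ and $H$ produces $H(x_0,\partial_x\varphi(x_0))+\lb u^-(x_0)\leqslant 0$. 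Symmetrically, the supersolution inequality is obtained by touching $u^-$ from below at $x_0$, running the backward calibrated curve $\gamma_{x_0}^-$ of Proposition \ref{prop-2}(3) into $x_0$, and passing to the limit in its defining equality; Proposition \ref{prop-2}(5) is precisely the pointwise form of the resulting identity along $\gamma_{x_0}^-$.

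The only routine parts are the two difference-quotient limits, and the sole place demanding care is making these limits legitimate: one must know that the competitor curves have velocities that stay bounded and converge as the time-length tends to zero, so that the mean-value limit of the discounted action reproduces $L(x_0,v)$ and the Legendre transform reproduces $H$. This is exactly what the pre-compactness of Proposition \ref{prop-2}(4) supplies, together with the Lipschitz regularity of $u^-$. I therefore expect no genuine obstacle beyond bookkeeping; the conceptual content is entirely contained in the fixed-point identity $\cT_s^-u^-=u^-$ and in the already-established fact that the evolutionary semigroup solves \eqref{eq:evo-hj} in the viscosity sense.
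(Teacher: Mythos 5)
Your treatment of item (7) is correct and takes a genuinely different route from the paper's. The paper proves (7) by redoing for $u^-$ the direct test-function verification it used for Proposition \ref{prop-1}(3); your primary argument instead establishes the fixed-point identity $\cT_s^-u^-=u^-$ (via the $e^{-\lb s}$-contraction and the semigroup property) and then transfers the space-time viscosity inequalities of the constant-in-time solution $U(x,t):=u^-(x)=\cT_t^-u^-(x)$ to the stationary equation by testing with time-independent functions $\tilde\varphi(x,t)=\varphi(x)$. This is sound: the contraction estimate follows from \eqref{eq:evo-solu} by comparing a common competitor curve, the interchange of $\cT_s^-$ with the limit $t\to+\infty$ is justified by non-expansiveness, and a spatial local maximum of $u^--\varphi$ is indeed a space-time local maximum of $U-\tilde\varphi$. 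One correction: for the fixed-point step you should quote item (1) (the cited existence of the uniform limit), not ``the convergence in Proposition \ref{prop-2}(6)'', since (6) is itself part of the statement being proved and, as written, your argument is formally circular. Your fallback argument --- domination for the subsolution inequality, calibrated curves plus item (5) for the supersolution inequality --- is essentially the paper's own route for (7).

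The genuine gap is item (6). The exponential $C^0$-rate $\|U_\psi(\cdot,t)-u^-\|\leqslant C_1e^{-\lb t}$ carries no citation, it is one of the two items the paper actually proves, and it is indispensable later: the constant $C_1$ drives the key estimates in the proof of Theorem \ref{thm:2}. The paper proves it by a two-sided competitor argument: plugging the backward calibrated curve $\gamma_x^-$ of item (3) into \eqref{eq:evo-solu} bounds $U_\psi(x,t)-u^-(x)$ by $\widetilde C_1e^{-\lb t}$, and extending the minimizer $\gamma_{x,t}$ of $U_\psi$ by a constant curve on $(-\infty,-t]$ as a competitor in \eqref{eq:sta-solu} bounds $u^-(x)-U_\psi(x,t)$ by $\widetilde C_2e^{-\lb t}$. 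Your proposal never addresses this item at all. Fortunately the omission is repairable inside your own framework, and more cheaply than in the paper: once $\cT_t^-u^-=u^-$ is derived from item (1) and the contraction, you get
\[
\|U_\psi(\cdot,t)-u^-\|_{C^0}=\|\cT_t^-\psi-\cT_t^-u^-\|_{C^0}\leqslant e^{-\lb t}\,\|\psi-u^-\|_{C^0},
\]
which is exactly item (6) with $C_1=\|\psi-u^-\|_{C^0}$, valid even for all $t\geqslant 0$. Adding this line (and rerouting the fixed-point step through item (1) alone) closes the gap and covers the full proposition.
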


 \begin{proof}
As direct citations, we have marked the exact references for the first five items of this Proposition. For item (6), due to the expression of $u^-(x)$ in \eqref{eq:sta-solu} and item (3), there must exist an absolutely continuous curve $\gamma_x^-:(-\infty, 0]\rightarrow \T^n $ with $\gamma_x^- (0)=x$ such that 
$
u^-(x)= \int_{-\infty}^0 e^{\lb \tau} L(\gamma_x^- ,\dot\gamma_x^- ) d\tau.
$
Then  we have
\begin{align*}
	&\, U_{\psi}(x,t)-u^-(x)\\
	\leqslant &\, e^{-\lb t}\psi(\gamma_x^-(-t))+\int_{-t}^0 e^{\lb \tau } L(\gamma_x^-,\dot\gamma_x^-) d\tau  -\int_{-\infty}^0 e^{\lb \tau} L(\gamma_x^-,\dot\gamma_x^-) d\tau \\
	 = &\, e^{-\lb t}\psi(\gamma_x^-(-t)) -\int_{-\infty}^{-t} e^{\lb \tau} L(\gamma_x^-,\dot\gamma_x^-)d\tau   \\
	 \leqslant &\, e^{-\lb t}\psi(\gamma_x^-(s)) -\min_{(x,v)\in T\T^n} L(x,v)  \int_{-\infty}^{-t} e^{\lb \tau}d\tau  \\
	 \leqslant &\, e^{-\lb t}\Big[   ||\psi||_{C^0} +\frac{1}{\lb}\min_{(x,v)\in T\T^n} L(x,v)  \big] \\
	 =&\, \widetilde C_1 e^{-\lb t},
\end{align*}
where $\widetilde C_1$ is a constant depending on $||\psi||_{C^0},\lambda$ and $\min_{T\T^n} L(x,v)$.
   On the other hand, there is an absolutely continuous curve $\gamma_{x,t}:[-t,0]\rightarrow \T^n $ with $\gamma^-_x(0)=x$ such that $U_{\psi}(x,t)$ attains the infimum in the formula \eqref{eq:evo-solu}, define $\xi:(-\infty,0] \rightarrow \T^n $ by $ \xi(\tau)=\gamma_{x,t}(\tau)$ for $\tau \in [-t,0] $ and $\xi(\tau)\equiv \gamma_{x,t}(-t) $ for $ \tau \leqslant -t $, it follows that $\xi $ is an absolutely continuous curve with $\xi(0)=x$ and
   \begin{align*}   	
     &\, u^-(x)-U_{\psi}(x,t) \\
   	\leqslant &\,\int_{-\infty}^0 e^{\lb \tau} L(\xi,\dot\xi) d\tau  -e^{-\lb t} \psi(\xi(-t))  - \int_{-t}^0 e^{\lb \tau} L(\xi,\dot\xi)  d\tau \nonumber\\
   	 \leqslant &\, e^{-\lb t} |\psi(\xi(-t))| + \int_{-\infty}^{-t} e^{\lb \tau} L(\xi,\dot\xi)  d\tau \nonumber \\
   	  \leqslant &\,  e^{-\lb t} \big[ ||\psi||_{C^0}+ \frac{1}{\lb}\max_{x\in\T^{n}}|L(x,0) |   \big]\nonumber\\
   	 =&\, \widetilde C_2 e^{-\lb t} \nonumber ,
   \end{align*}
   with $\widetilde C_2$ being a constant depending on $||\psi||_{C^0},\lambda $ and $\max_{x\in\T^{n}}|L(x,0) |$. Combining previous two inequalities we prove this item.

\medskip
 
For item (7), the proof is similar with item (4) of Proposition \ref{prop-1}.
\end{proof}

\medskip

\section{Exactness of the KAM torus}\label{s4}

\vspace{10pt}

\noindent{\it Proof of Theorem \ref{thm:exact}:}
The invariance of $\Sigma$ implies for any $x\in\T^{n}$,
\[
\Phi^{t}_{ H,\lb}\Big(x,P(x)\Big)=\Big(x(t),P\big(x(t)\big)\Big),\quad\forall t\in\R.
\]
Due to \eqref{eq:ode0}, for any $x\in\T^n$,
\be
-H_{x}(x,P(x))-\lb P(x)&=&\dot{p}(0)\nonumber\\
&=&\,\frac{dP\big(x(t)\big)}{dt}\bigg|_{t=0}=d_{x}P(x)\cdot\dot{x}(0)\nonumber\\
&=&\langle d_{x}P(x), H_p(x,P(x))\rangle.
\ee
On the other side, we define
\[
G(x):=\lb u(x)+H(x,P(x))\in C^1(\T^{n},\R),
\]
which satisfies
\be
d_xG(x)&=& \lb  d_xu(x)+d_xH(x,P(x)) \nonumber\\
&=&\,\langle d_{x}P(x), H_p(x,P(x))\rangle dx+\lb d_{x}u(x)dx+H_{x}(x,P(x))dx\nonumber\\
&=&\,\lb[d_{x}u(x)-P(x)]dx=\,-\lb cdx
\ee
since $P(x)=c+d_x u(x)$. We can read through previous equality for $i=1,...,n$,
\[
\partial_{x_{i}}G(x)+\lb c_{i}=0.
\]
By integrating the above equality w.r.t. $x_{i}$ over $\T$, then $c_{i}=0$ for $i=1,...,n$.\qed \\

\noindent{\it Proof of Theorem \ref{thm:lan}:} It suffices to show that $\Omega (T\cT_\om, T\cT_\om)= 0$, which is equivalent to show $K^*\Om\big|_{T\T^n}= 0$. Recall that
\ben
(\Phi_{H,\lb}^t\circ K)^*\Om =K^*(\Phi_{H,\lb}^t)^*\Om=e^{\lb t} K^*\Om.
\een
On the other side, $K\circ\rho_\om^t=\Phi_{H,\lb}^t\circ K$, which implies
\[
(K\circ\rho_\om^t)^*\Om=(\rho_\om^t)^* K^*\Om.
\]
Combining these two equalities we get
\[
(\rho_\om^{-t})^* K^*\Om=e^{-\lb t}K^*\Om,\quad \forall \ t\in\R_+.
\]
Since $\cT_\om$ is $\Phi_{H,\lb}^t-$invariant, and $\lim_{t\rightarrow+\infty}(\rho_\om^{-t})^* K^*\Om=0$, we prove $K^*\Om=0$.\qed
\vspace{10pt}

\section{ $W^{1,\infty}-$convergence speed of the Lax-Oleinik semigroup}\label{s5}

\vspace{10pt}

\subsection{Semiconcave functions with linear modulus}\label{a5}

\begin{defn}[Hausdorff metric]\label{defn:Hausdorff metric}
	Let $(X,d)$ be a metric space and $\mathcal{K}(X)$ be the set of non-empty compact subset of $X$. The Hausdorff metric $d_H$ induced by $d$ is defined by
	$$
	d_H(K_1,K_2)=\max \big\{ \max_{x\in K_1}d(x,K_2), \max_{x\in K_2}d(K_1,x) \big\}, \quad \forall K_1,K_2\in \mathcal{K}(X)
	$$
\end{defn}

\begin{defn}[SCL]
Let $\cU \subset \R^n $ be a open set. A function $f:\cU\to \R$ is said to  be {\sf semiconcave with linear modulus (SCL for short)} if there exists a constant $C>0$ such that
$$
\lambda f(x)+(1-\lambda)f(y)-f(\lambda x+(1-\lambda )y )\leqslant \frac{C}{2} \lambda(1-\lambda )|x-y|^2 \quad  \forall x,y \in \cU  , \ \forall \lambda\in [0,1].
$$
\end{defn}
\begin{defn}\label{def:D+D-}
Assume $f\in C(\cU,\R)$, for any $x \in \cU $, the closed convex set
	$$
	D^+  f(x)=\Big\{ \eta \in  T^*\cU  : \limsup_{|h |\to 0 } \frac{f(x+h )-f(x)-\langle \eta ,h \rangle }{|h|} \leqslant 0  \Big\}
	$$
	$$
	\Big( \ \text{resp.} \ D^- f(x)=\Big\{  \eta \in  T^*\cU : \limsup_{|h |\to 0 } \frac{f(x+h )-f(x)-\langle \eta ,h \rangle }{|h|} \geqslant 0  \Big\} \Big)
	$$
	is called the {\sf super-differential (resp. {\sf sub-differential}) set} of $f$ at $x$.
\end{defn}
\begin{defn}
	Suppose $f: \cU \to \R $ is local Lipschitz. A vector $p\in  T^*\cU$ is called a {\sf reachable gradient} of $u$ at $x \in  \cU$ if a sequence $\{ x_k \}_{k\in\N} \subset \cU \backslash \{x\} $ exists such that $f$ is differentiable at $x_k$ for each $k\in \N$, and 
	$$
	 \lim_{k \to \infty}x_k=x,\quad \lim_{k \to \infty }d_xf(x_k)=p.
	$$
	The set of all reachable gradients of $f$ at $x$ is denoted by $D^* f(x)$.
\end{defn}

\begin{lem}\cite[Theorem.3.1.5(2)]{CANNARSA}
$f:\cU\subset\R^d\rightarrow \R$ is a SCL, then $D^+f(x)$ is a nonempty compact convex set  for any $x\in\cU$ .
\end{lem}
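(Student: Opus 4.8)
The plan is to reduce the SCL property to ordinary concavity and then read off the three topological assertions from classical convex analysis. First I would observe that $f$ is SCL with constant $C$ on $\cU$ if and only if the function $g(x):=f(x)-\tfrac{C}{2}|x|^2$ is concave on $\cU$. This rests on the elementary identity
\[
\lambda|x|^2+(1-\lambda)|y|^2-|\lambda x+(1-\lambda)y|^2=\lambda(1-\lambda)|x-y|^2:
\]
substituting $f=g+\tfrac{C}{2}|\cdot|^2$ into the defining SCL inequality, the quadratic contributions reproduce exactly the right-hand side $\tfrac{C}{2}\lambda(1-\lambda)|x-y|^2$, so the SCL inequality becomes the concavity inequality $\lambda g(x)+(1-\lambda)g(y)\leqslant g(\lambda x+(1-\lambda)y)$.

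Next I would record a pointwise characterization of the superdifferential that is better adapted to the topological claims than the $\limsup$ definition. Using the concavity of $g$ and smoothness of the quadratic term, $\eta\in D^+f(x)$ if and only if
\[
f(y)-f(x)-\langle\eta,y-x\rangle\leqslant\frac{C}{2}|y-x|^2\qquad\text{for all }y\in B(x,r)\subset\cU.
\]
The backward direction is immediate, since the right-hand side is $o(|y-x|)$; the forward direction uses that, for the concave function $g$, the one-sided directional bound encoding $D^+$ upgrades to a global quadratic bound because $g$ lies below each of its supporting hyperplanes. From this characterization convexity and closedness are routine: the inequality is affine in $\eta$, so any convex combination of two admissible covectors is admissible, and the inequality plainly passes to limits $\eta_k\to\eta$. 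Boundedness follows from the local Lipschitz continuity of $f$ (inherited from that of the concave $g$): testing the characterizing inequality along $h$ and $-h$ gives $\langle\eta,h\rangle\leqslant L|h|+\tfrac{C}{2}|h|^2$ for small $h$, and the choice $h=r\,\eta/|\eta|$ yields $|\eta|\leqslant L+\tfrac{C}{2}r\to L$ as $r\to 0$. Hence $D^+f(x)$ is closed and bounded, i.e.\ compact.

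The one genuinely nontrivial point, and the step I expect to be the main obstacle, is nonemptiness. Here I would invoke the supporting-hyperplane theorem for the concave function $g$: at any interior point $x\in\cU$, the finite-dimensional separation (Hahn--Banach) theorem provides a covector $\xi$ with
\[
g(y)\leqslant g(x)+\langle\xi,y-x\rangle\qquad\text{for all }y\in B(x,r).
\]
Unwinding $g=f-\tfrac{C}{2}|\cdot|^2$ via $|y|^2-|x|^2=|y-x|^2+2\langle x,y-x\rangle$ shows that $\eta:=\xi+Cx$ satisfies the quadratic characterization displayed above, so $\eta\in D^+f(x)$ and the set is nonempty. Thus, once the reduction to concavity is in place, the existence of a supporting hyperplane for $g$ is the crux of the argument, while convexity, closedness and compactness are bookkeeping consequences of the affine-in-$\eta$ characterization together with local Lipschitz continuity.
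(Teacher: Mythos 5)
The paper does not prove this lemma at all---it is quoted verbatim as a citation of Theorem 3.1.5(2) in Cannarsa--Sinestrari---so there is no internal proof to compare against; your proposal must be judged on its own, and it is correct. Your route (SCL $\Leftrightarrow$ $g=f-\tfrac{C}{2}|\cdot|^2$ concave, then the quadratic characterization $f(y)-f(x)-\langle\eta,y-x\rangle\leqslant\tfrac{C}{2}|y-x|^2$ of $D^+f(x)$, then convex analysis) is essentially the standard textbook argument for the linear-modulus case, and each step checks out: the algebraic identity behind the reduction is right; the forward direction of the characterization does follow because difference quotients of the concave $g$ along rays are monotone, so the Fr\'echet-type $\limsup$ bound upgrades to a supporting-hyperplane inequality; convexity and closedness follow since the characterization is an intersection of half-spaces in $\eta$; boundedness follows from local Lipschitz continuity of $f$ (inherited from $g$, concave functions being locally Lipschitz on open sets); and nonemptiness is exactly the existence of a supporting hyperplane for $g$ at an interior point, with the bookkeeping $\eta=\xi+Cx$ done correctly. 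Two small points worth making explicit: since $\cU$ is only assumed open, the reduction to concavity must be performed on balls $B(x,r)\subset\cU$ (you do this implicitly, but it should be stated, as concavity on a nonconvex $\cU$ is not meaningful); and an alternative, equally standard route to nonemptiness---the one tied to the paper's later use of Theorem \ref{D^+ convex}---is via Rademacher's theorem: $f$ is locally Lipschitz, hence differentiable a.e., and limits of gradients at differentiability points give elements of $D^*f(x)\subset D^+f(x)$, which generalizes to arbitrary moduli of semiconcavity where the quadratic characterization does not.
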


\begin{thm}\cite[Theorem.3.3.6]{CANNARSA}\label{D^+ convex}
	Let $f:\cU \to \R $ be  a semiconcave function. For any $z\in \cU$,
	$$
	D^+ f(z)=co\footnote{convex hull, i.e. the smallest convex set containing the given set} D^* f(z),
	$$
	i.e. any element in $D^+f(z)$ can be expressed as a convex combination of elements in $D^*f(z)$. As a corollary, ex$(D^+f(z))\subset D^*f(z)$, i.e. any extremal element of $D^+f(z)$ has to be contained in $D^*f(z)$.
\end{thm}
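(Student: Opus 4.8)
The plan is to establish the two inclusions $co\, D^*f(z)\subseteq D^+f(z)$ and $D^+f(z)\subseteq co\, D^*f(z)$ separately, and then read off the statement about extreme points as a corollary. Throughout I fix the semiconcavity constant $C$ and use the quantitative description of the super-differential that SCL supplies: a covector $\eta$ lies in $D^+f(x)$ if and only if
\[
f(x+h)-f(x)-\langle \eta,h\rangle\leqslant \frac{C}{2}|h|^2
\]
for all small $h$. First I would record two soft facts about $D^*f(z)$. It is nonempty: by Rademacher's theorem $f$ is differentiable almost everywhere, so $z$ is approached by points of differentiability, whose gradients are bounded since $f$ is locally Lipschitz, and any convergent subsequence yields a reachable gradient. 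It is also compact, being bounded by the Lipschitz constant and closed by a diagonal argument. Hence $co\, D^*f(z)$ is a compact convex set.

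For the easy inclusion I would exploit that the set-valued map $x\mapsto D^+f(x)$ has closed graph. Indeed, if $x_k\to z$ and $\eta_k\in D^+f(x_k)$ with $\eta_k\to\eta$, then letting $k\to\infty$ in the displayed inequality written at $x_k$ shows $\eta\in D^+f(z)$. Applying this with $x_k$ a differentiability point, where $d_xf(x_k)\in D^+f(x_k)$, proves that every reachable gradient lies in $D^+f(z)$, i.e. $D^*f(z)\subseteq D^+f(z)$. Since $D^+f(z)$ is convex by the cited lemma, we obtain $co\, D^*f(z)\subseteq D^+f(z)$.

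The hard inclusion carries the real content, and I would attack it through support functions. As $co\, D^*f(z)$ and $D^+f(z)$ are both compact and $D^+f(z)$ is convex, it suffices to show their support functions coincide, namely that for every direction $\theta$,
\[
\max_{\eta\in D^+f(z)}\langle\eta,\theta\rangle=\max_{\eta\in D^*f(z)}\langle\eta,\theta\rangle,
\]
the inequality $\geqslant$ being the easy inclusion; so the point is that the maximum over $D^+f(z)$ in every direction is attained at a reachable gradient. Equivalently, I would show that each \emph{exposed} point of $D^+f(z)$ belongs to $D^*f(z)$. To locate exposed points it is natural to pass to the concave function $g(y):=f(y)-\frac{C}{2}|y|^2$, for which $D^+f(y)=\partial g(y)+Cy$, so reachable gradients and super-differentials transform compatibly and the problem reduces to the concave case. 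Given an exposed point $p$ of $\partial g(z)$ with exposing direction $\theta$, I would construct differentiable points $y_k\to z$ with $d_xg(y_k)\to p$ by maximizing $y\mapsto g(y)+s\langle\theta,y\rangle-\frac{1}{2\varepsilon}|y-z|^2$ over a small closed ball, letting $\varepsilon\to 0$ and then $s\to 0^+$, and using that the push in the exposing direction forces the almost-everywhere-existing gradients of the near-maximizers to cluster at $p$. This selection of differentiable near-maximizers whose gradients converge to the exposed covector itself — rather than merely to some element of the super-differential — is the main obstacle, and it is precisely here that the concavity of $g$ and the strict maximality encoded in the exposing direction are indispensable.

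With that lemma in hand I would invoke Straszewicz's theorem, that exposed points are dense in the extreme points of a compact convex set, together with Minkowski's theorem, to write $D^+f(z)=co\big(ex\,D^+f(z)\big)\subseteq co\, D^*f(z)$, completing the hard inclusion. Finally the corollary is immediate: once $D^+f(z)=co\, D^*f(z)$ with $D^*f(z)$ compact, the extreme points of the convex hull of a compact set lie in the set itself, so $ex\,D^+f(z)\subseteq D^*f(z)$.
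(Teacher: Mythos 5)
First, a remark on the comparison: the paper does not prove this statement at all --- it is quoted directly from Cannarsa--Sinestrari \cite[Theorem 3.3.6]{CANNARSA} --- so your attempt must be judged against the standard proof of that theorem. Your overall architecture is exactly the standard one and most of it is sound: the easy inclusion via the closed-graph property of $x\mapsto D^{+}f(x)$ (valid precisely because the semiconcavity inequality is uniform in the base point) together with convexity of $D^{+}f(z)$; the hard inclusion by reducing to showing that exposed points of $D^{+}f(z)$ are reachable gradients, then invoking Straszewicz's and Minkowski's theorems; and Milman's theorem for the corollary about extreme points.

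The genuine gap sits exactly where you flagged it: the selection of differentiability points whose gradients converge to a given exposed point. The proposed mechanism --- maximize $y\mapsto g(y)+s\langle\theta,y\rangle-\frac{1}{2\varepsilon}|y-z|^{2}$, let $\varepsilon\to0$, then $s\to0^{+}$ --- provably cannot work. Test it on $n=1$, $g(y)=-|y|$, $z=0$, so $\partial g(0)=[-1,1]$, $D^{*}g(0)=\{-1,+1\}$, and take the exposed point $p=+1$ with exposing direction $\theta=+1$. For every $0<s<1$ and every $\varepsilon>0$ the tilted, penalized function $-|y|+sy-\frac{y^{2}}{2\varepsilon}$ attains its maximum exactly at the kink $y=0$: a small tilt never moves the maximizer of a concave function off a corner, since $0$ stays in the tilted superdifferential $[-1+s,1+s]$. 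So the maximizers are never differentiability points, the near-maximizers fill a two-sided interval around $0$ whose gradients cluster at \emph{both} $-1$ and $+1$, and the subgradient certified by the first-order condition at the maximizer is $-s\theta\to0\neq p$. Moreover the direction of the push is backwards: for semiconcave functions, gradients at differentiability points approached along $+\theta$ converge to the \emph{minimizer} of $\langle\cdot\,,\theta\rangle$ over $D^{+}$ (in the example, a large tilt $s>1$ does dislodge the maximizer, but to the right, where the gradient is $-1$). The correct selection, which is what Cannarsa--Sinestrari do, needs no maximization: pick differentiability points $y_{t}$ with $|y_{t}-(z-t\theta)|\leqslant t^{2}$, i.e.\ approach $z$ along $-\theta$. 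Then the semiconcavity inequality applied at $p\in D^{+}f(z)$ gives $f(z)-f(z-t\theta)\geqslant t\langle p,\theta\rangle-\frac{C}{2}t^{2}$, while the same inequality applied at $df(y_{t})\in D^{+}f(y_{t})$ gives $\langle df(y_{t}),\theta\rangle\geqslant\frac{f(z)-f(z-t\theta)}{t}-O(t)\geqslant\langle p,\theta\rangle-O(t)$; every cluster point of $df(y_{t})$ lies in $D^{+}f(z)$ by the closed-graph property you already established, and exposedness (uniqueness of the maximizer of $\langle\cdot\,,\theta\rangle$) forces that cluster point to equal $p$. With this replacement for your penalization step, the rest of your proof goes through.
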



\begin{thm}\label{D^+ semiconcave}\cite[Th. 5.3.8]{CANNARSA}
For any fixed $t>0$, the viscosity solutions $U_{\psi}(x,t):=\cT_{t}^-\psi(x)$ (resp. $u^-(x)$) of (\ref{eq:evo-hj}) (resp. (\ref{eq:sta-hj})) is $SCL_{loc}$ (resp. SCL) on $\T^n \times (0,+\infty) $ (resp. $\T^n$).
\end{thm}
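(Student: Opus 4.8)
The plan is to run the classical \emph{variational doubling} technique, whose only analytic inputs are the $C^{2}$-regularity of the Tonelli Lagrangian $L$ and the pre-compactness of action-minimizers recorded in Proposition~\ref{prop-1} and Proposition~\ref{prop-2}. I would establish the semiconcavity inequality directly on $U_\psi(\cdot,t)=\cT^-_t\psi$ for fixed $t$. Fix $t_{0}>0$, two points $x,y$, and set $z=\mu x+(1-\mu)y$ for $\mu\in[0,1]$. Let $\gamma_{z}:[-t_{0},0]\to\T^{n}$ be the $C^{2}$ minimizer realizing $U_\psi(z,t_{0})$ given by Proposition~\ref{prop-1}(1), with $\gamma_{z}(0)=z$ and left endpoint $w:=\gamma_{z}(-t_{0})$. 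Working in the universal cover I feed into the variational principle the two affinely-shifted competitors
\[
\gamma_{x}(\tau):=\gamma_{z}(\tau)+\tfrac{\tau+t_{0}}{t_{0}}(x-z),\qquad
\gamma_{y}(\tau):=\gamma_{z}(\tau)+\tfrac{\tau+t_{0}}{t_{0}}(y-z),
\]
which share the left endpoint $w$ and end at $x$ and $y$ respectively. Since all three curves start at $w$, the boundary contributions $e^{-\lb t_{0}}\psi(w)$ enter the combination $\mu U_\psi(x,t_{0})+(1-\mu)U_\psi(y,t_{0})-U_\psi(z,t_{0})$ with total weight $\mu+(1-\mu)-1=0$ and cancel identically, so only a Lagrangian term survives:
\[
\mu U_\psi(x,t_{0})+(1-\mu)U_\psi(y,t_{0})-U_\psi(z,t_{0})\leqslant\int_{-t_{0}}^{0}e^{\lb\tau}\,\Lambda_\mu(\tau)\,d\tau,
\]
where $\Lambda_\mu(\tau)=\mu L(\gamma_{x},\dot\gamma_{x})+(1-\mu)L(\gamma_{y},\dot\gamma_{y})-L(\gamma_{z},\dot\gamma_{z})$. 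By construction $(\gamma_{z},\dot\gamma_{z})=\mu(\gamma_{x},\dot\gamma_{x})+(1-\mu)(\gamma_{y},\dot\gamma_{y})$, so $\Lambda_\mu(\tau)$ is exactly the convexity defect of the $C^{2}$ function $L$ at a convex combination.

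The next step is a second-order Taylor expansion of $L$. The a priori velocity bound for Tonelli minimizers (Proposition~\ref{prop-1}(2), extended to compact $t_{0}$-intervals by the standard Tonelli estimate) confines $(\gamma_{z},\dot\gamma_{z})$ to a compact subset of $T\T^{n}$, uniformly in $z$ and locally uniformly in $t_{0}>0$; since the shifts move positions by at most $|x-y|$ and velocities by $|x-y|/t_{0}$, all arguments stay in one fixed compact set $\mathcal{K}$ for $|x-y|\leqslant1$. With $M:=\sup_{\mathcal{K}}\|D^{2}L\|$ one gets
\[
\Lambda_\mu(\tau)\leqslant\tfrac{M}{2}\,\mu(1-\mu)\big(|\gamma_{x}-\gamma_{y}|^{2}+|\dot\gamma_{x}-\dot\gamma_{y}|^{2}\big)\leqslant\tfrac{M}{2}\,\mu(1-\mu)\big(1+t_{0}^{-2}\big)|x-y|^{2},
\]
and integrating against $e^{\lb\tau}$ over $[-t_{0},0]$ yields the SCL inequality in $x$ with constant $C(t_{0})=M(1+t_{0}^{-2})(1-e^{-\lb t_{0}})/\lb$. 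The \emph{joint} $SCL_{loc}$ statement on $\T^{n}\times(0,+\infty)$ follows by the same scheme once one additionally perturbs the time endpoint $t_{0}\mapsto t_{0}\pm s$ and reparametrizes $\gamma_{z}$ affinely on the rescaled interval, which contributes only extra $O(s^{2})$ and $O(|x-y|\,s)$ terms of the same Taylor type; I would flag this bookkeeping as the fussiest, though entirely routine, part of the computation.

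For the stationary solution $u^{-}$ I would avoid re-running the estimate by exploiting that $u^{-}$ is a fixed point of the semigroup. Since $u^{-}$ solves \eqref{eq:sta-hj}, the time-independent function $(x,t)\mapsto u^{-}(x)$ solves the Cauchy problem \eqref{eq:evo-hj} with initial datum $u^{-}$; by the Comparison Principle the viscosity solution is unique, forcing $\cT^{-}_{t}u^{-}=u^{-}$ for all $t\geqslant0$. Crucially the modulus $C(t_{0})$ above is \emph{independent of the initial datum}, because the datum entered only through the boundary term that cancelled, and the velocity bound of Proposition~\ref{prop-1}(2) at $t_{0}=1$ is itself $\psi$-independent. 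Applying the already-established SCL property of $\cT^{-}_{1}$ to the continuous function $u^{-}$ therefore shows $u^{-}=\cT^{-}_{1}u^{-}$ is SCL on $\T^{n}$ with the uniform constant $C(1)$; alternatively, the infinite-horizon representation of Proposition~\ref{prop-2}(1) together with the pre-compactness of calibrated curves in Proposition~\ref{prop-2}(4) lets one run the identical doubling on a fixed window $[-1,0]$, freezing the curve on $(-\infty,-1]$.

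I expect the main obstacle to be precisely the \emph{uniformity of all the constants}. The fixed-$t$ statement needs a modulus independent of the (possibly distant) left endpoint $w$, and the passage to the stationary case needs one independent of $\psi$; both rely entirely on the pre-compactness of minimizers furnishing a single compact set $\mathcal{K}$ on which the $C^{2}$-bounds of $L$ are taken, and on the exact cancellation of the datum-dependent boundary term in the second difference. The unavoidable degeneration $C(t_{0})\sim t_{0}^{-2}$ as $t_{0}\to0^{+}$ is what confines the uniform estimate to compact sub-slabs bounded away from the initial time and accounts for the ``$loc$'' in $SCL_{loc}$.
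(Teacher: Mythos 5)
Your argument is sound, but note that the paper offers no proof of this statement at all: it is quoted directly from \cite[Theorem 5.3.8]{CANNARSA}, so what you have written is in effect the missing self-contained proof, and it is exactly the classical doubling technique that underlies the cited theorem, adapted to the discounted functional. The main points all check out: the affinely shifted competitors share the left endpoint, so the datum term enters the second difference with total weight $\mu+(1-\mu)-1=0$; the convexity defect of the $C^2$ Lagrangian on the compact set furnished by Proposition~\ref{prop-1}(2) gives the bound $\tfrac{M}{2}\mu(1-\mu)(1+t_0^{-2})|x-y|^2$; the blow-up of the constant as $t_0\to 0^+$ is precisely why the statement is only $SCL_{loc}$ in time; and for $u^-$ the fixed-point identity $\cT_1^-u^-=u^-$ (via Proposition~\ref{prop-1}(3) plus uniqueness, or directly from items (2)--(3) of Proposition~\ref{prop-2}) combined with the $\psi$-independence of the $t=1$ velocity bound yields SCL with a uniform modulus. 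The one place where your bookkeeping hides a real (though harmless) term is the joint $(x,t)$ case: once the terminal times $t_0+s_1$ and $t_0+s_2$ differ, the boundary contributions $e^{-\lb(t_0+s_i)}\psi(w)$ no longer cancel identically; their weighted combination is the convexity defect of $s\mapsto e^{-\lb(t_0+s)}$, a term of size $O\big(\lb^2\|\psi\|_{C^0}\,\mu(1-\mu)|s_1-s_2|^2\big)$. This is quadratic, hence compatible with a linear modulus, but it is an extra source beyond the reparametrization terms you list, and it depends on $\|\psi\|_{C^0}$ --- which is also why only the fixed-$t$ constant, not the joint one, is datum-independent.
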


\begin{thm}\label{D^* 1-1}
For any $ x\in \T^n$ (resp. $(x,t)\in \T^n \times(0,+\infty)$) and  $p\in D^*u^-(x)$ (resp. $ (p_t,p_x) \in  D^*U_{\psi}(x,t)$),there is a minimal curve $\gamma_{x}^-:(-\infty,0]\rightarrow \T^n$ (resp. $\gamma_{x,t}:[-t,0]\rightarrow \T^n$) satisfying 
\[
u^-(x)=\int_{-\infty}^0 e^{\lb\tau}L(\gamma_{x}^-(\tau),\dot\gamma_{x}^-(\tau))d\tau.
\]
\begin{align*}
\bigg(	resp.\quad U_{\psi}(x,t)= e^{-\lb t}\psi(\gamma_{x,t}(-t))+\int _{-t}^0 e^{\lb \tau}  L(\gamma_{x,t}(\tau ),\dot \gamma_{x,t}(\tau) )   d \tau\bigg)
\end{align*}
and
\[
\lim_{s\rightarrow 0_-}\dot\gamma_x^-(s)=\frac{\partial H}{\partial p}(x,p) \quad
\bigg(	resp.\quad\lim_{s\rightarrow 0_-}\dot\gamma_{x,t}(s)=\frac{\partial H}{\partial p}(x,p_x)\bigg).
\]
Conversely, for any calibrated curve $\gamma_x^-:(-\infty,0]\rightarrow \T^n$ (resp. $\gamma_{x,t}:[-t,0]\rightarrow \T^n$) ending at $x$, the left derivative at $s=0$ (resp. $s=0$) exists and satisfies 
\[
\lim_{s\rightarrow 0_-}L_v(\gamma_x^-(s),\dot\gamma_x^-(s))\in D^*u^-(x)
\]
\[
\bigg(	resp.\quad
     \begin{pmatrix}
      \lim_{s\rightarrow 0_-}L_v(\gamma_{x,t}(s),\dot\gamma_{x,t}(s))    \\
        -\lim_{s\rightarrow 0_-}\lb U_\psi(\gamma_{x,t}(s),s)+H(\gamma_{x,t}(s),L_v(\gamma_{x,t}(s),\dot\gamma_{x,t}(s)))
\end{pmatrix}
        \in D^*U_\psi(x,t)\bigg).
\]
\end{thm}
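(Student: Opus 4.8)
The plan is to prove the two directions separately for the stationary solution $u^-$ and then transfer the argument to the evolutionary solution $U_\psi$ with only notational changes. The backbone of everything is one auxiliary fact: the final momentum of any backward calibrated curve is a super-differential of $u^-$ at its endpoint, i.e. if $\gamma$ ends at $y$ then $L_v(y,\dot\gamma(0))\in D^+u^-(y)$. I would obtain this from the Domination property (Proposition \ref{prop-2}(2)) together with the calibration identity: comparing $u^-(y+h)$ with the action of the curve obtained by perturbing $\gamma$ on a short window $[-\epsilon,0]$ so that it ends at $y+h$ instead of $y$, subtracting the calibration equality for $\gamma$, and expanding to first order gives $u^-(y+h)-u^-(y)\leqslant\langle L_v(y,\dot\gamma(0)),h\rangle+o(|h|)$, which is exactly the super-differential inequality. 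This step uses only domination and is therefore not circular.

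Next I would handle the converse (curve $\to$ reachable gradient) direction, which is the quick one. For $s<0$ the point $\gamma_x^-(s)$ is an interior point of the calibrated curve, so by Proposition \ref{prop-2}(5) the function $u^-$ is differentiable there and $d_xu^-(\gamma_x^-(s))=L_v(\gamma_x^-(s),\dot\gamma_x^-(s))$. Letting $s\to 0_-$ and using continuity of $\gamma_x^-$, of $\dot\gamma_x^-$, and of $L_v$, one gets $d_xu^-(\gamma_x^-(s))\to L_v(x,\dot\gamma_x^-(0))$; since this is a limit of genuine gradients taken along points converging to $x$, the very definition of a reachable gradient yields $L_v(x,\dot\gamma_x^-(0))\in D^*u^-(x)$. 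The only care needed is the degenerate possibility that $\gamma_x^-$ is stationary at $x$ near time $0$; in that case I would approximate by interior points of nearby calibrated curves so that the approximating points stay distinct from $x$.

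For the forward (reachable gradient $\to$ curve) direction, fix $p\in D^*u^-(x)$ and a sequence $x_k\to x$, $x_k\neq x$, of differentiability points with $d_xu^-(x_k)\to p$. At each $x_k$ the super-differential $D^+u^-(x_k)$ is the singleton $\{d_xu^-(x_k)\}$, so by the auxiliary fact the calibrated curve $\gamma_{x_k}^-$ supplied by Proposition \ref{prop-2}(3) has final momentum $L_v(x_k,\dot\gamma_{x_k}^-(0))=d_xu^-(x_k)$, equivalently $\dot\gamma_{x_k}^-(0)=H_p(x_k,d_xu^-(x_k))$. Using the uniform velocity bound of Proposition \ref{prop-2}(4) (which also bounds accelerations via (E-L) on the compact $\T^n$) and continuous dependence of Euler--Lagrange orbits on their endpoint data, a diagonal extraction over $[-T,0]$, $T=1,2,\dots$, produces a subsequence converging in $C^1_{loc}((-\infty,0])$ to an Euler--Lagrange curve $\gamma_x^-$ with $\gamma_x^-(0)=x$ and $\dot\gamma_x^-(0)=H_p(x,p)$ by continuity of $H_p$. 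Passing the calibration identity on each $[s,0]$ to the limit shows $\gamma_x^-$ is calibrated, and letting $s\to-\infty$ (so $e^{\lambda s}u^-(\gamma_x^-(s))\to 0$) gives $u^-(x)=\int_{-\infty}^0 e^{\lambda\tau}L(\gamma_x^-,\dot\gamma_x^-)\,d\tau$, together with $\lim_{s\to0_-}\dot\gamma_x^-(s)=H_p(x,p)$, as required.

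For the evolutionary solution $U_\psi$ the spatial component of the argument is word-for-word the same, with the finite-horizon minimizer $\gamma_{x,t}$ on $[-t,0]$ (furnished by Proposition \ref{prop-1}(1),(2)) replacing $\gamma_x^-$; here the limit extraction is over a compact interval and hence easier. The only new ingredient is the time component $p_t$ of the reachable gradient, which is pinned down by the equation \eqref{eq:evo-hj}: since $p_x=L_v$ plays the role of $\partial_xU_\psi$ along the minimizer, the EdH-J relation forces $p_t=-\lambda U_\psi(x,t)-H(x,p_x)$, which is precisely the second entry of the matrix in the statement. I expect the main obstacle to be the compactness-and-limit step of the forward direction over the non-compact interval $(-\infty,0]$: one must control the whole family $\{\gamma_{x_k}^-\}$ uniformly on arbitrarily long windows, verify that the limit is genuinely calibrated (not merely minimizing on finite windows) and defined on all of $(-\infty,0]$, and confirm that its terminal velocity is exactly $H_p(x,p)$. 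The auxiliary super-differential identity is the other delicate point, since it must be proved from domination alone to avoid circularity.
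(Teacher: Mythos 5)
Your proposal is correct and takes essentially the same route as the paper: both directions are handled identically in structure, with the forward implication proved by approximating $p$ with gradients $d_xu^-(x_k)$ at differentiability points $x_k\to x$ and passing the associated unique calibrated curves to the limit, and the converse proved via differentiability of the solution at interior points of calibrated curves, sending $s\to 0_-$ to obtain a reachable gradient. The only divergences are technical: where you pin the endpoint momentum through a domination-based super-differential lemma and extract the limit curve by Arzel\`a--Ascoli with uniform velocity/acceleration bounds, the paper instead invokes the weak KAM identity $(x,\lim_{\tau\to 0_-}\dot\gamma_x^-(\tau))=\cL_H^{-1}(x,d_xu^-(x))$ at differentiability points and identifies the limit curve via uniqueness and continuous dependence for the lifted conformally symplectic ODE in phase space---interchangeable means to the same end.
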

\begin{proof}
If  $u^-$ is differentiable at $x$, by item (3) of Proposition \ref{prop-2}, there exists a unique $\lb-$calibrated curve $\gamma_{x}^-:(-\infty,0]\rightarrow \T^n$ ending with $x$, such that $u^-(\gamma_x^-(s))$ is differentiable for any $ s\in(-\infty,0) $, which implies
\begin{equation}\label{le-dual}
(x,\lim_{\tau\rightarrow 0_-}\dot{\gamma}^-_{x}(\tau))=\cL_H^{-1}(x,d_{x}u^{-}(x)).
\end{equation} 
Equivalently, $(\xi(s),p(s)):=(\gamma^-_{x}(s),d_{x}u^{-}(\gamma_x^-(s) )  ) $ solving
\be\label{ODE-discount}
	\begin{split}	
	\begin{cases}
		\dot \xi(s)=\partial_p  H(\xi (s),p(s) ) , \\
		\dot p(s) =-\partial_x H(\xi(s),p(s) )-\lb  p(s) 
	\end{cases}
	\end{split}
\ee
for $s\in(-\infty,0]$.

If $x\in  \T^n $ is a non-differentiable point of $u^-$, then for any $p_x\in D^* u^-(x)$, there exists a sequence $\{x_k \}_{k\in\N} $ of differentiable points of $u^-$ converging to $x$, such that $p=\lim_{k\to \infty} d_xu^-(x_k)$. Due to item (5) of Proposition \ref{prop-2} we have
$$
\lambda u^-(x_k) +H(x_k,d_x u^-(x_k)) =0
$$
and there exists minimizing curves $\{\gamma_{x_k}^-\}_{k\in\N}$ solving \eqref{ODE-discount}, such that by
letting $k \to \infty $, we get
\[
\lb u^-(x)+ H(x, p_x)=0 .
\]

Due to the uniqueness of the solution of  \eqref{ODE-discount}, the limit curve $\gamma_{x}^-:(-\infty,0]\rightarrow \T^n$ of the sequence of minimizing curves $\{\gamma_{x_k}^-\}_{k\in\N}$ has to be unique as well, 
with the 
terminal conditions $\gamma_x^- (0)=x, \ p(0)=p_x $.
This proves that the correspondence 
\[
\Upsilon: p_x \in D^*u^-(x)\to \gamma_{x}^-(\tau)\big|_{\tau\in(-\infty,0]}
\]
 is injective.

Now we prove the other direction.
if $\gamma\in C^{ac}((-\infty,0],\T^n)$ is $\lambda$-calibrated by $u^{-}$ with $\gamma(0)=x$, due to item (4) and (7) of Proposition \ref{prop-2} $u^{-}$ is differentiable at $\gamma(s)$ and $\gamma$ is actually $C^{2}$ for any $s\in(-\infty,0)$. Therefore, by \eqref{le-dual} , setting 
\begin{align*}
	 & p_x=\lim_{s\rightarrow 0_{-}}d_{x}u^{-}(\gamma(s))=\lim_{s\rightarrow 0_{-}}L_v(\gamma(s),\dot \gamma(s)),
\end{align*}
there holds $p_x \in D^{\ast}u(x)$. By a similar analysis the conclusion can be proved for $U_\psi(x,t)$, or see \cite[Th.6.4.9]{CANNARSA} for a direct citation.
\end{proof}

\medskip

\subsection{Proof of Theorem \ref{thm:2}}
Based on aformentioned preparations, we turn to the proof of Theorem \ref{thm:2}. Recall that the KAM torus $\cT_\om$ is the graph of $d u^-$ (due to Theorem \ref{cor:kam-sol}), where $u^-(x)$ is the unique $C^2-$classic solution of (\ref{eq:sta-hj}). On the other side, $U_{\psi}(x,t)$ is SCL$_{loc}$ w.r.t. $(x,t) \in \T^n \times (0,+\infty)$, then $D^+U_{\psi}(x,t)$ has to be a compact convex set. Now we assume $t\geqslant 1$, then for any $ (p_x,p_t) \in D^*U_{\psi}(x,t)$, due to Proposition \ref{D^* 1-1} there is a unique minimizer curve  $\gamma_{x,t} $ with $\gamma_{x,t} (0)=x$ such that 
\begin{align*}
\quad U_{\psi}(x,t)= e^{-\lb t}\psi(\gamma_{x,t}(0))+\int _{-t}^0 e^{\lb \tau}  L(\gamma_{x,t}(\tau ),\dot \gamma_{x,t}(\tau) )   d \tau
\end{align*}
with $p_x= \lim_{\tau \to 0_- } d_x  U_{\psi}(\gamma_{x,t}(\tau ),\tau )  $.  Moreover, the following properties of $u^-$ and $U_{\psi}$ can be proved easily:

\begin{lem} \label{lem: converange prop} 
\begin{itemize}
\item[(1)] For any fixed $x \in \T^n $, we have
\ben
	d_H(d_x u^-(x),\Pi_x D^+ U_{\psi}(x,t)) 
	=\, \max_{\substack{ p_x \in \Pi_x D^*U_{\psi}(x,t) } } | d_x u^-(x) -p_x  | \een	
	where $\Pi_x: T^*_{(x,t)}( \T^n\times \T) \to T_x^* \T^n $ is the standard projection.
\item[(2)] There exists a constant $C_2(\psi,L, \lambda)$ depending only on $\psi$ and $L$, such that 
\ben
	|d_x u^-(x)-p_x |
	\leqslant  C_2(\psi, L, \lambda) \lim_{\tau \to 0_- } |\dot \gamma_x^-(\tau ) -\dot \gamma_{x,t}(\tau ) | 
\een
\item[(3)] There exists a constant $A:=A(\psi,L, \lambda)$ such that 
\be\label{compact}
\left\{
\begin{aligned}
	&\, \big| \dot \gamma_{x,t}(\tau) \big|, \big| \ddot \gamma_{x,t}(\tau)\big|  \leqslant A \\
	&\, \big|\dot \gamma_x^-(\tau) \big|, \big| \ddot \gamma_x^-(\tau) \big|  \leqslant A
	\end{aligned}
	\right. \quad \quad \tau\in(-t,0),\quad x\in \T^n.
\ee
	\end{itemize}
\end{lem}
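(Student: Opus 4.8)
The plan is to treat the three items as essentially independent estimates that are later combined in the proof of Theorem \ref{thm:2}, and to establish them in the order (3) $\to$ (2), handling (1) separately. Item (3) provides the compact set of admissible velocities on which the Legendre duality of item (2) is Lipschitz, while item (1) is a purely convex-geometric identity that does not rely on the other two.

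For item (3), the velocity bounds $|\dot\gamma_{x,t}(\tau)|, |\dot\gamma_x^-(\tau)|\leqslant K$ are exactly item (2) of Proposition \ref{prop-1} (valid since $t\geqslant 1$) and item (4) of Proposition \ref{prop-2}. Since both curves solve the Hamiltonian system \eqref{ODE-discount} (equivalently satisfy the (E-L) equation), their phase-space lifts stay in the compact set $\{(x,p): x\in\T^n,\ |p|\leqslant K'\}$, where $K'$ is the image of the velocity bound under the Legendre transform. On this compact set the $C^2$ vector field $(\dot x,\dot p)=(H_p,-H_x-\lb p)$ is bounded, whence $\ddot\gamma=\frac{d}{ds}H_p=H_{px}\dot x+H_{pp}\dot p$ is bounded as well; taking $A$ to be the maximum of all these bounds over the compact set yields \eqref{compact}.

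For item (2), Theorem \ref{D^* 1-1} guarantees that the one-sided limits $v_0^-:=\lim_{s\to 0_-}\dot\gamma_x^-(s)$ and $v_0:=\lim_{s\to 0_-}\dot\gamma_{x,t}(s)$ exist and that, by calibration, $d_xu^-(x)=L_v(x,v_0^-)$ and $p_x=L_v(x,v_0)$ at the common endpoint $x=\gamma_x^-(0)=\gamma_{x,t}(0)$. Hence $|d_xu^-(x)-p_x|=|L_v(x,v_0^-)-L_v(x,v_0)|$, and since by item (3) both velocities lie in the compact ball $\{|v|\leqslant A\}$ on which $L_{vv}$ is bounded (because $L$ is $C^2$ Tonelli), the mean value inequality gives the claim with $C_2=\sup_{x\in\T^n,\,|v|\leqslant A}\|L_{vv}(x,v)\|$. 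For item (1), the KAM assumption forces $u^-$ to be $C^2$ (Theorem \ref{cor:kam-sol}), so $d_xu^-(x)$ is a single point $a$, and the Hausdorff distance from $\{a\}$ to a non-empty compact set $K$ collapses to $\max_{p\in K}|a-p|$, the term $\min_{p\in K}|a-p|$ being dominated. It then remains to replace $K=\Pi_x D^+U_\psi(x,t)$ by $\Pi_x D^*U_\psi(x,t)$: the map $p\mapsto|a-p|$ is convex, projection commutes with convex hull so that Theorem \ref{D^+ convex} gives $\Pi_x D^+U_\psi(x,t)=\mathrm{co}\,(\Pi_x D^*U_\psi(x,t))$, and the maximum of a convex function over a compact convex set is attained at an extreme point; since extreme points of the convex hull of a compact set belong to that set, the maximum over $\Pi_x D^+$ is attained in $\Pi_x D^*$, and equality follows from $\Pi_x D^*\subset\Pi_x D^+$.

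The genuinely substantive step is item (1): the reduction from $D^+$ to $D^*$ combines the structure theorem $D^+=\mathrm{co}\,D^*$ with the attainment of the maximum of a convex function at an extreme point, and requires checking that the projection $\Pi_x$ commutes with taking convex hulls and preserves the extreme-point/convex-hull relation. Once the pre-compactness of Propositions \ref{prop-1}--\ref{prop-2} and the $C^2$ Legendre duality are in hand, items (2) and (3) are routine.
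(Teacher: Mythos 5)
Your proposal is correct and follows essentially the same route as the paper: item (1) via the identity $D^+U_\psi=\mathrm{co}\,D^*U_\psi$ (Theorem \ref{D^+ convex}) together with the fact that the maximum of the convex function $p\mapsto|d_xu^-(x)-p|$ over a convex hull is attained on the original set; item (2) via the characterization $p_x=\lim_{\tau\to 0_-}L_v(\gamma_{x,t},\dot\gamma_{x,t})$, $d_xu^-(x)=\lim_{\tau\to 0_-}L_v(\gamma_x^-,\dot\gamma_x^-)$ from Theorem \ref{D^* 1-1} plus a Lipschitz bound on $L_v$ over the compact velocity set; and item (3) via the pre-compactness of Proposition \ref{prop-1}(2) and Proposition \ref{prop-2}(4) combined with the Euler--Lagrange/Hamiltonian equations to bound the accelerations. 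The differences (your ordering (3)$\to$(2)$\to$(1), and your explicit checks that $\Pi_x$ commutes with convex hulls and that $u^-$ is differentiable by Theorem \ref{cor:kam-sol}) are presentational refinements, not a different argument.
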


%
\begin{proof}
(1) Due to Lemma \ref{D^+ convex}, $ D^+  U_{\psi}(x,t)=coD^*  U_{\psi}(x,t)$ then by Definition \ref{D^* 1-1} , it obtain 
  \begin{align*}
  	&\, d_H(d_x u^-(x),\Pi_x D^+ U_{\psi}(x,t)) 
	= \max_{\substack{ p_x \in \Pi_x D^+U_{\psi}(x,t) } } | d_x u^-(x) -p_x  |\\
	 =&\, \max_{\substack{ p_x \in co \Pi_x D^*U_{\psi}(x,t) } } | d_x u^-(x) -p_x  |
	=\max_{\substack{ p_x \in \Pi_x D^*U_{\psi}(x,t) } } | d_x u^-(x) -p_x  |.
  \end{align*}

(2) Since $\gamma_x^-(0)=\gamma_{x,t}(0)=x$, due to Theorem \ref{D^* 1-1} and item (5) of Proposition \ref{prop-2},  we obtain that 
\begin{align*}
	p_x= \lim_{\tau \to 0_- } d_x  U_{\psi}(\gamma_{x,t}(\tau ),\tau ) =\lim_{\tau \to 0_- } L_v( \gamma_{x,t}(\tau ),\dot \gamma_{x,t}(\tau )) , \\
	d_x u^-(x)= \lim_{\tau \to 0_- } d_x  u^-(\gamma_{x}^-(\tau ),\tau )=\lim_{\tau \to 0_- } L_v( \gamma_{x}^-(\tau ),\dot \gamma_{x}^-(\tau )) .
\end{align*}
Due to item(2) of Proposition \ref{prop-1} and item(4) of Proposition \ref{prop-2}, we have
\begin{align*}
	 |d_x u^-(x)-p_x |\leqslant & \, \lim_{\tau \to 0_- } | L_v(\gamma_x^-(\tau) ,\dot \gamma_x^-(\tau ))-L_v( \gamma_{x,t}(\tau ),\dot \gamma_{x,t}(\tau )) |\\
	 \leqslant & \, C_2(\psi, L, \lambda) \lim_{\tau \to 0_- } |\dot \gamma_x^-(\tau ) -\dot \gamma_{x,t}(\tau ) | .
\end{align*}

(3)  Denote that $p(\tau)=L_v(\gamma_{x,t}(\tau),\dot \gamma_{x,t}(\tau))$, due to dissipative equation \eqref{eq:ode0}, we have 
\begin{align*}
	&\,\ddot{\gamma}_{x,t}(\tau)= \frac{d}{dt}H_p\big(\gamma_{x,t}(\tau),p(\tau) \big)
	=H_{px}(\gamma_{x,t}(\tau),p(\tau)) \cdot H_p(\gamma_{x,t}(\tau),p(\tau) ) \\  &\, \quad \quad\quad \quad\quad \quad\quad \quad\quad  +H_{pp}(\gamma_{x,t}(\tau),p(\tau)) (-H_x(\gamma_{x,t}(\tau),p(\tau))-\lambda p(\tau)  ) \quad \tau\in(-t,0).
\end{align*}
By item(2) of Proposition \ref{prop-1} and item(4) of Proposition \ref{prop-2}, for any $\tau\in(-t,0)$,   $(\gamma_{x,t}(\tau),\dot \gamma_{x,t}(\tau) )$ and $(\gamma_{x}^-(\tau),\dot \gamma_{x}^-(\tau) )$  fall in both a compact set of $\T^n$. Hence, there exists a constant $A:=A(\psi,L, \lambda)$ such that $\big| \dot \gamma_{x,t}(\tau) \big|, \big| \ddot \gamma_{x,t}(\tau)\big|  \leqslant A$ and $\big| \dot \gamma_{x,t}(\tau) \big|, \big| \ddot \gamma_{x,t}(\tau)\big|  \leqslant A$ for any $\tau\in (-t,0)$, even for $\tau\in[-t,0]$ if we consider the unilateral limit. This completes the proof.
 \end{proof}

\medskip

 Now we define a substitute Lagrangian 
\beq\label{eq:rec-lan}\tag{$\star$}
l(x,w ):=L(x,w )-\lambda u^- (x)-\langle w, d_xu^-(x)\rangle 
\eeq
on $(x,w)\in T\T^n $. Due to item (3) of Proposition \ref{prop-2}, there exists a $\lb-$calibrated curve $\gamma_x^-:(-\infty,0]\rightarrow \T^n$, of which 
\[
u^-(x)-e^{-\lb t}u^-(\gamma_x^-(-t))=\int_{-t}^0 e^{\lb\tau}L(\gamma_x^-,\dot\gamma_x^-)d\tau
\]
and
\begin{align*}
	&\, u^-(x)-e^{-\lb t}u^-(\gamma_x^-(-t) )=\int_{-t}^0 \frac{d}{d\tau}\big[ e^{\lambda \tau }u^-(\gamma_x^-(\tau )) \big] \ d \tau \\
	=&\,\int_{-t}^0 e^{\lambda \tau }\Big[ \dot \gamma_x^-(\tau) \cdot d_x u^- + \lambda u^- \Big] \ d \tau.
\end{align*}
Therefore, we have
\be
\int_{-t}^0 e^{\lb\tau} l(\gamma_x^-(\tau),\dot \gamma_x^-(\tau)  ) \ d \tau =0
\ee
 and 
\be
\int_{-t}^0 e^{\lb\tau} l(\eta(\tau),\dot \eta(\tau) )\ d \tau \geqslant 0
\ee
  for  any $C^1-$curve $\eta:[-t,0]\rightarrow\T^n$, due to item (2) of Proposition \ref{prop-2}. 
\begin{lem}
	Suppose $t\geqslant \sigma$ for some $\sigma>0$, then there exists a constant $\alpha_0:=\alpha_0(\psi,L,\lambda) $ depending only on $L,\psi,\lambda$ such that
	\be\label{eq:estimate l}
	\left\{
	\begin{aligned}
		  \int^{0}_{-\sigma} e^{\lambda \tau }  l ( \gamma_{x,t}(\tau),\dot \gamma_{x,t}(\tau))\ d \tau   \leqslant  &\, 2  C_1(\psi,L, \lambda ) \ e^{- \lambda t}, \\
	 \int^{0}_{-\sigma} e^{\lambda \tau }  l ( \gamma_{x,t}(\tau),\dot \gamma_{x,t}(\tau)  )\ d \tau    \geqslant &\, \alpha_0(\psi,L,\lambda) \int^{0}_{-\sigma} e^{\lambda \tau }  \big|\dot \gamma_{x,t}(\tau)- \dot \eta_{x,t} (\tau)\big|^2\ d \tau,
	\end{aligned}
	\right.
	\ee
	where 
	\be\label{eq:curve eta}
	\eta_{x,t}(r):=x-\int^0_r \frac{\partial H}{\partial p}(\gamma_{x,t}(\tau) , d_x u^-(\gamma_{x,t}(\tau) )) \ d \tau 
	\ee
	for any $r\in(- \sigma , 0) $.
\end{lem}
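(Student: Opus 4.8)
The plan is to exploit the fact that, under the standing KAM assumption, $u^-$ is a genuine $C^2$ classical solution of \eqref{eq:sta-hj}: by Theorem \ref{cor:kam-sol} one has $H(x,d_xu^-(x))+\lb u^-(x)=0$ for \emph{every} $x\in\T^n$, not merely along calibrated curves. Feeding $p=d_xu^-(x)$ into the Fenchel inequality $L(x,w)\geqslant\langle d_xu^-(x),w\rangle-H(x,d_xu^-(x))$ then gives, for all $(x,w)\in T\T^n$,
\[
l(x,w)=L(x,w)-\lb u^-(x)-\langle w,d_xu^-(x)\rangle\geqslant -H(x,d_xu^-(x))-\lb u^-(x)=0,
\]
with equality precisely when Fenchel's inequality is saturated, i.e. when $w=H_p(x,d_xu^-(x))$. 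Thus $l(x,\cdot)$ is a nonnegative convex function attaining its minimum value $0$ at the Legendre-dual velocity $w^*(x)=H_p(x,d_xu^-(x))$. This single observation drives both estimates.

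For the upper bound I would first evaluate the integral over the whole interval $[-t,0]$. The affine-in-$w$ part of $l$ is, up to the factor $e^{-\lb\tau}$, minus the total derivative $\frac{d}{d\tau}\big[e^{\lb\tau}u^-(\gamma_{x,t}(\tau))\big]$, so after integrating and using that $\gamma_{x,t}$ realizes $U_\psi(x,t)$ (item (1) of Proposition \ref{prop-1}), one obtains
\[
\int_{-t}^0 e^{\lb\tau}l(\gamma_{x,t},\dot\gamma_{x,t})\,d\tau=\big(U_\psi(x,t)-u^-(x)\big)+e^{-\lb t}\big(u^-(\gamma_{x,t}(-t))-\psi(\gamma_{x,t}(-t))\big).
\]
Item (6) of Proposition \ref{prop-2} bounds the first bracket by $C_1e^{-\lb t}$, while the second is at most $e^{-\lb t}\big(\|u^-\|_{C^0}+\|\psi\|_{C^0}\big)$; enlarging the constant $C_1$ to absorb $\|u^-\|_{C^0}+\|\psi\|_{C^0}$ yields the bound $2C_1e^{-\lb t}$ for the full integral. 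Since $l\geqslant0$ pointwise and $\sigma\leqslant t$, the integral over $[-\sigma,0]$ is dominated by the integral over $[-t,0]$, which is the first inequality.

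For the lower bound, the point is that $\dot\eta_{x,t}(\tau)=H_p(\gamma_{x,t}(\tau),d_xu^-(\gamma_{x,t}(\tau)))=w^*(\gamma_{x,t}(\tau))$ is exactly the minimizer of $l(\gamma_{x,t}(\tau),\cdot)$ identified above. A second-order Taylor expansion of $w\mapsto l(\gamma_{x,t}(\tau),w)$ at $\dot\eta_{x,t}(\tau)$ has vanishing zeroth- and first-order terms (because $l=0$ and $\partial_w l=0$ there) and Hessian $\partial^2_{ww}l=L_{vv}$, leaving
\[
l(\gamma_{x,t}(\tau),\dot\gamma_{x,t}(\tau))=\tfrac12\big(\dot\gamma_{x,t}-\dot\eta_{x,t}\big)^{\!\top}L_{vv}(\gamma_{x,t},\theta)\big(\dot\gamma_{x,t}-\dot\eta_{x,t}\big)
\]
for some velocity $\theta$ on the segment between $\dot\gamma_{x,t}(\tau)$ and $\dot\eta_{x,t}(\tau)$. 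By Lemma \ref{lem: converange prop}(3) the velocities $\dot\gamma_{x,t}$ stay in a fixed compact set, and since $d_xu^-$ is bounded on $\T^n$ so do the $\dot\eta_{x,t}$; hence $\theta$ ranges in a fixed compact set on which (H1) forces $L_{vv}\geqslant 2\alpha_0\,\mathrm{Id}$ for some $\alpha_0=\alpha_0(\psi,L,\lb)>0$. This gives the pointwise inequality $l(\gamma_{x,t},\dot\gamma_{x,t})\geqslant\alpha_0|\dot\gamma_{x,t}-\dot\eta_{x,t}|^2$; multiplying by $e^{\lb\tau}\geqslant0$ and integrating over $[-\sigma,0]$ produces the second inequality.

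The main obstacle is making the convexity constant $\alpha_0$ uniform in $x$ and $t$: the quadratic lower bound is only valuable if $L_{vv}$ is bounded below by a single positive constant along the whole family of curves, which is exactly why the a priori velocity (and acceleration) bounds of Lemma \ref{lem: converange prop}(3) are needed to trap both $\dot\gamma_{x,t}$ and $\dot\eta_{x,t}$—together with the connecting segment—inside one compact region of $T\T^n$. Once this localization is secured, the remaining work is routine constant bookkeeping.
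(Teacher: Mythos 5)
Your proposal is correct, and on the key estimate (the lower bound) it is essentially the paper's argument: the paper introduces $F(x,w):=l(x,w)+H(x,d_xu^-(x))+\lambda u^-(x)$, observes it is nonnegative by Fenchel duality and vanishes exactly at $w=H_p(x,d_xu^-(x))=\dot\eta_{x,t}$, and then invokes (H1) to assert $F\geqslant \alpha_0\,|w-\dot\eta_{x,t}|^2$; since $H(x,d_xu^-(x))+\lambda u^-(x)=0$ pointwise (classical solution under the KAM hypothesis), $F=l$ and the inequality follows. Your Taylor expansion at the Legendre-dual velocity together with the compactness localization is precisely the justification of the step the paper only asserts (``Due to (H1), there exists $\alpha_0$\dots''), and you are more explicit than the paper about why $\alpha_0$ can be chosen uniformly in $(x,t)$, using the velocity bounds of Lemma \ref{lem: converange prop}(3) and the boundedness of $d_xu^-$. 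For the upper bound the two routes differ in bookkeeping: the paper works directly on $[-\sigma,0]$, uses the semigroup identity $U_\psi(x,t)=e^{-\lambda\sigma}U_\psi(\gamma_{x,t}(-\sigma),t-\sigma)+\int_{-\sigma}^0e^{\lambda\tau}L\,d\tau$, and applies the $C^0$-convergence estimate of item (6) of Proposition \ref{prop-2} twice, at $(x,t)$ and at $(\gamma_{x,t}(-\sigma),t-\sigma)$, which yields literally $2C_1e^{-\lambda t}$; you instead integrate $l$ over the whole interval $[-t,0]$, pick up the endpoint term $e^{-\lambda t}\big(u^-(\gamma_{x,t}(-t))-\psi(\gamma_{x,t}(-t))\big)$, apply the $C^0$ estimate only once, and then use the pointwise inequality $l\geqslant 0$ (which you prove via Fenchel, and which the paper also has implicitly since $F=l$) to restrict to $[-\sigma,0]$. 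Both are valid; your constant comes out as $C_1+\|u^-\|_{C^0}+\|\psi\|_{C^0}$ rather than exactly $2C_1$, which is harmless since it still depends only on $\psi,L,\lambda$ (one may simply enlarge $C_1$ in Proposition \ref{prop-2}(6), which remains true, to recover the stated form). The paper's variant buys the exact constant $2C_1$; yours avoids the semigroup decomposition and needs the $C^0$ estimate only at time $t$, which incidentally sidesteps the (minor, also present in the paper) issue that Proposition \ref{prop-2}(6) is stated for times at least $1$ and the paper applies it at time $t-\sigma$.
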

\begin{proof}
 By the definition of the solution semigroup $\cT_{t}^- \psi(x)=U_{\psi}(x,t)$ and item (2) of Proposition \ref{prop-1}, there exists a $C^2$ curve $\gamma_{x,t}:[s,t]\mapsto \T^n$ such that
$$
 U_{\psi}(x,t )=e^{-\lambda \sigma  } U_{\psi}(\gamma_{x,t }(-\sigma),-\sigma) + \int_{-\sigma }^{0} e^{\lambda \tau } L( \gamma_{x,t}(\tau),\dot \gamma_{x,t}(\tau)  )\ d\tau,
$$
By integrating function $l$ along $\gamma_{x,t} $ with $\gamma_{x,t}(0)=x$ over the interval $[-\sigma , 0]$, we obtain

\begin{align*}
	&\, \int^0_{-\sigma} e^{\lambda \tau }  l ( \gamma_{x,t}(\tau),\dot \gamma_{x,t}(\tau)  )\ d \tau +  u^-(x)- e^{-\lambda \sigma  }u^-(\gamma_{x,t}(-\sigma )  )\\
	=&\, \int^0_{-\sigma} e^{\lambda \tau } L( \gamma_{x,t}(\tau),\dot \gamma_{x,t}(\tau)  )\ d\tau =  U_{\psi}(x )- e^{-\lambda \sigma  } U_{\psi}(\gamma_{x,t }(-\sigma),-\sigma) 
\end{align*}
which implies that
\begin{equation}\label{eq:lu}
	\begin{split}
		 &\,\int^0_{-\sigma} e^{\lambda \tau }  l ( \gamma_{x,t}(\tau),\dot \gamma_{x,t}(\tau)  )\ d \tau  \\
		\leqslant &\, e^{-\lambda \sigma} \Big| u^-(\gamma_{x,t}(-\sigma )) - U_{\psi}(\gamma_{x,t}(-\sigma),-\sigma ) \Big|  +  |u^-(x) -  U_{\psi}(x,t) |, \\
	\leqslant &\, e^{-\lambda \sigma} C_1(\psi,L , \lambda)e^{-\lambda (t-\sigma)}+   C_1(\psi,L, \lambda ) e^{- \lambda t} \\
	=&\, 2 C_1(\psi,L, \lambda ) e^{ -\lambda t}
	\end{split}
\end{equation}
where $C_1(\psi, L)$ is the same constant as in item (6) of Proposition \ref{prop-2}. On the other hand, we denote 
\beq\tag{$\star\star$}
F(x,w):=l(x,w)+H(x,d_x u^-(x))+\lambda u^-(x) ,\quad (x,w)\in T\T^n
\eeq
which verifies to be nonnegative by the Fenchel Transform. Moreover,
\begin{align*}
	\frac{\partial F}{\partial w}(x,w)=&\, \frac{\partial l}{\partial w}(x,w)= \frac{\partial L}{\partial w}(x,w)- d_x u^-(x) \\
  \frac{\partial ^2 F}{\partial  w^2 } ( x,w)=&\, \frac{\partial ^2 l}{\partial  w^2 } (x,w) = \frac{\partial ^2 L}{\partial  w^2 } (x,w)
\end{align*}
then $F(x,w)=0$ if and only if $w=\frac{\partial H}{\partial p}(x,d_xu^-(x) )$. Due to (H1), there exists $\alpha_0(\psi,L,\lambda)$ such that
\be\label{eq:F convex}
	F(\gamma_{x,t}(\tau),\dot \gamma_{x,t}(\tau) )\geqslant \alpha_0(\psi,L,\lambda)\Big|\dot \gamma_{x,t}(\tau)-\frac{\partial H}{\partial p}(\gamma_{x,t}(\tau) , d_x u^-(\gamma_{x,t}(\tau) )) \Big|^2.
\ee
Recall that $H(x,d_x u^-(x),t)+\lambda u^-(x)=0 $ and we introduce 
\[
\eta_{x,t}(r ):=x-\int^0_r \frac{\partial H}{\partial p}(\gamma_{x,t}(\tau) , d_x u^-(\gamma_{x,t}(\tau) )) \ d \tau 
\]
 for any $ - \sigma < r< 0 $. Thus
\beq\label{eq:substitute-curve}
\dot\eta_{x,t} (\tau) = \frac{\partial H}{\partial p}(\gamma_{x,t}(\tau) , d_x u^-(\gamma_{x,t}(\tau) )) .
\eeq
Now from \eqref{eq:F convex}, we get
\begin{align*}
	 l ( \gamma_{x,t}(\tau),\dot \gamma_{x,t}(\tau)  )\geqslant &\,\alpha_0(\psi,L,\lambda)\Big|\dot \gamma_{x,t}(\tau)-\frac{\partial H}{\partial p}(\gamma_{x,t}(\tau) , d_x u^-(\gamma_{x,t}(\tau) )) \Big|^2\\
	 =&\, \alpha_0(\psi,L,\lambda)\big|\dot \gamma_{x,t}(\tau)- \dot \eta_{x,t} (\tau)\big|^2
\end{align*}
which completes the proof.
\end{proof}


Recall that $\cT_\om$ is the unique attractor in its local neighborhood $\cU$ (see Appendix \ref{a2}). There exists a constant $\Delta_0=\Dt_0(\lb,L,\psi)>0$ such that  
\[
 dist(\cT_\om,\partial \cU ) \geqslant \frac{3}{2}\Delta_0  C_2(\psi, L, \lambda) 
 \]
  can be guaranteed by chosing suitable $\cU$. Consequently, the following Lemma holds:

\begin{lem}\label{lemma:speed distance}
	There exists a suitable constant $s_0=s_0(\psi,L,\lb)\geqslant 1$ such that   
$$
\lim_{\tau \to 0_- } | \dot \gamma_{x,t}(\tau )-\dot \gamma_x^-(\tau ) | \leqslant   \frac{3}{2} \Delta_0, \quad\quad  \forall \ t \in [s_0,+\infty) , \; x\in \T^n
$$ 
\end{lem}
\begin{proof}


For any $s\geqslant 1$, 
we assume that 
\beq\label{eq:assume}\tag{$\odot$}
\lim_{\tau \to 0_- } | \dot \gamma_{x,t}(\tau )-\dot \gamma_x^-(\tau ) | > \Delta_0
\eeq
 for some $t\geqslant s$ and $x\in\T^n$. Otherwise, the assertion of this Lemma holds. 
 Due to \eqref{compact}, there exists a constant 
\[
A_0(\psi,L):=  \max\bigg\{A, \max_{(x,v) \in \T^n\times B(0,A)  }\frac{\partial H}{\partial p}(x,L_v(x,v))\bigg\}
\]
 such that for any $\tau \in [-t,0]$
\begin{align*}
	| \dot \eta_{x,t}(\tau) |=&\,\Big|\frac{\partial H}{\partial p}(\gamma_{x,t}(\tau) , d_x u^-(\gamma_{x,t}(\tau) )) \Big|= \Big|\frac{\partial H}{\partial p}(\gamma_{x,t}(\tau) , L_v(\gamma_{x,t}(\tau), \dot \gamma_{x,t}(\tau)) \Big| \leqslant A_0
\end{align*}
where $B(0,A)$ is a disk centering at $0$ and of a radius $A:=A(\psi,L, \lambda)$ which has been given in item (3) of Lemma \ref{lem: converange prop}. On the other side,  there exists $\sigma \in [0,t]$ such that 
\[
 | \dot \gamma_{x,t}( \tau ) - \dot\eta_{x,t} (\tau)   | > \Delta_0 ,\quad \tau \in (-\sigma,0]
 \]
 and $ | \dot \gamma_{x,t}( -\sigma ) - \dot\eta_{x,t} (-\sigma)   |=\Dt_0$ as long as $s$ suitably large. This is because 
\begin{align*}
	2C_1(\psi,L, \lambda) e^{-\lambda t} \geqslant	 &\,\int^0_{-\sigma} e^{\lambda \tau }  l ( \gamma_{x,t}(\tau),\dot \gamma_{x,t}(\tau)  )\ d \tau \\
	\geqslant &\, \alpha_0 \int^0_{-\sigma} e^{\lambda \tau }  \big|\dot \gamma_{x,t}(\tau)- \dot\eta_{x,t} (\tau)\big|^2\ d \tau \\
	\geqslant &\, \frac{\alpha_0}{\lambda} \min_{\tau\in [-\sigma,0] }\big|\dot \gamma_{x,t}(\tau)- \dot\eta_{x,t} (\tau)\big|^2 (1-e^{-\lambda \sigma })\\
	\geqslant &\, \frac{\alpha_0}{\lambda} \Delta_0^2 (1-e^{-\lambda \sigma }).
\end{align*}
due to \eqref{eq:estimate l}. Therefore,  if we make 
\[
s_0:= \max\bigg\{-\frac{1}{\lambda}\ln \frac{\alpha_0 \Delta_0^2}{\lambda C_1(\psi,L,\lambda)} ,\;  -\frac{1}{\lambda} \ln \frac{\alpha_0 \Delta_0^2}{C_1(\psi,L, \lambda)} \big ( 1-e^{-\frac{\lambda \Delta_0}{4A_0}}  \big),\; 1\bigg\}
\]
 then 
 $
 \sigma\leqslant \frac{\Delta_0}{4A_0}
 $
 since $t\geqslant s\geqslant s_0$. Due to \eqref{eq:substitute-curve},
\begin{align*}
	\lim_{\tau \to 0_- } | \dot \gamma_{x,t}(\tau )-\dot \gamma_x^-(\tau ) | =&\, \lim_{\tau \to 0_- } | \dot \gamma_{x,t}(\tau )-\dot \eta_{x,t}^-(\tau ) |\\
 \leqslant  &\, \max_{\tau \in(-\sigma ,0)}  | \dot \gamma_{x,t}(\tau )-\dot \eta_{x,t}^-(\tau ) |  \\
  \leqslant &\,      \big| \dot \gamma_{x,t}(-\sigma )- \dot\eta_{x,t} (-\sigma )\big|+ \sigma\cdot \max_{\tau\in(-\sigma,0)}\big|\ddot \gamma_{x,t}(\tau )- \ddot\eta_{x,t} (\tau ) \big| \\
	 \leqslant  &\Dt_0+2A_0 \sigma  \leqslant\, \frac{3}{2}\Delta_0 
\end{align*}
 for any possible $t\geqslant s $ and $x\in\T^n$ such that \eqref{eq:assume} holds. So we  complete the proof.
\end{proof}


\vspace{20pt}

\noindent{\it Proof of Theorem \ref{thm:2}:}
For any $x \in \T^n $ and  $ t \geqslant s_0(\psi,L,\lambda ) $ with $s_0 $ given in Lemma \ref{lemma:speed distance}, there holds
\ben
& & \max_{x\in\T^n}d_H \bigg( \Pi_x D^+\cT_{t}^-\psi(x), 
      P(x)   \bigg)  \\
& = &\max_{x\in\T^n}d_H(\Pi_x D^*U_{\psi}(x,t) ,d_xu^- )  \\
&= &\max_{x\in\T^n}\max_{p_x\in \Pi_x D^*U_{\psi}(x,t) }  \big| p_x-d_xu^-  \big| 
\een
due to Lemma \ref{lem: converange prop}, where $d_H(\cdot,\cdot):2^{\R^n}\times 2^{\R^n}\rightarrow\R$ is the {\sf Hausdorff distance} between any two subsets of $\R^n$. Besides, due to Lemma \ref{lemma:speed distance}, for any $p_x\in \Pi_x D^*U_{\psi}(x,t)$ and  $t\geqslant s_0$, it obtains that 
$$
|p_x-d_xu^-|\leqslant  C_2(\psi,L,\lambda)  \lim_{\tau \to 0_- } | \dot \gamma_{x,t}(\tau )-\dot \gamma_x^-(\tau ) |\leqslant \frac{3}{2} C_2(\psi,L,\lambda) \Delta_0
$$
which implies that $(x,p_x)$ lies in the neighborhood $\cU$ of KAM tours $\cT_\omega $, i.e.
\be\label{eq:s0 in U}
(x,p_x)\in \cU  \quad \forall \ t\geqslant s_0.
\ee

Next, we claim that there exists  a constant $C_3(\psi,L, \lambda )>0$  depending only on $\psi, L$ and $\lambda $, such that 
\be\label{claim lambda}
|p_x-d_xu^-|\leqslant C_3(\psi,L,\lambda )  e^{-\lambda t}, \quad \forall \ t\geqslant s_0.
\ee
Due to \eqref{eq:s0 in U}, 
$$
t_0:=\sup\{ s\geqslant 0| \Phi_{H,\lambda}^{-\tau} (x,p_x) \in \cU,\forall \tau\in[0,s] \}
$$
is always finite. Since  $\cT_\omega=\{y,P(y)| y\in \T^n \}$ is a Lipschitz graph with Lipschitz constant $l:=l(\psi,L,\lambda)$ , then  
$$\cT_\omega \subset S(x,l):= \big\{(y,z) \in T^*\T^n \big|  |z-P(x)| \leqslant l|y-x| \big\}$$
 which $S(x,l)$ is a cone clustered at $(x,P(x))\in T^*\T^n$. Therefore, 
 \be\label{eq:attactor2}
\begin{split}
	|p_x-d_x u^-|= &\, |(x,p_x)-(x,P(x))|\geqslant dist(  (x,p_x) ,\cT_\omega )\\
	 \geqslant &\, dist(  (x,p_x),S(x,l))=\frac{1}{\sqrt{l^2+1}} |p_x-d_x u^- |.
\end{split}
\ee
On the other side, Proposition \ref{prop:local-stab} implies $\cT_\om$ is normally hyperbolic with the Lyapunov exponent $-\lb<0$. There exists constants $C_4(\psi,L,\lambda ),$ $C_5(\psi,L,\lambda )>0$  depending only on $\psi,L$ and $\lambda$,  such that 
\be\label{eq:acctor1}
\begin{split}
	C_4(\psi,L,\lambda ) e^{\lambda s}dist(  (x,p_x) ,\cT_\omega ) \leqslant &\,dist( \Phi_{H,\lambda}^{-s} (x,p_x) ,\cT_\omega ) \\ \quad \leqslant &\, dist(  (x,p_x) ,\cT_\omega )C_5(\psi,L,\lambda) e^{\lambda s},  \quad \forall  s \in [0,t_0] .
\end{split}
\ee
Benefiting from \eqref{eq:acctor1}, we conclude
$$ 
 e^{\lambda t_0}C_5(\psi,L,\lambda)|p_x-d_xu^-|\geqslant   dist( \Phi_{H,\lb}^{-t_0} (x,p_x) ,\cT_\omega )  \geqslant C_0:= \frac{1}{4}\text{diam} \ \cU. 
$$
Consequently,
\be\label{eq:t0}
t_0 \geqslant  \frac{C_0}{C_5(\psi,L,\lambda) }  \cdot (-\frac{1}{\lambda} \ln |p_x-d_xu^-| ) . 
\ee
For any $\tau\in(-t_0,0) $ and $\eta_{x,t}:[-t,0]\to \T^n$ defined as in \eqref{eq:curve eta}, we have the following estimate:
\be\label{eq:estimate 2 curve speed}
\begin{split}
	 \big|\dot \gamma_{x,t}(\tau)- \dot \eta_{x,t} (\tau)\big|^2 
	=  &\,   \Big|\dot \gamma_{x,t}(\tau)-\frac{\partial H}{\partial p}(\gamma_{x,t}(\tau) , d_x u^-(\gamma_{x,t}(\tau) )) \Big|^2\\
	 =&\,  \Big|\frac{\partial H}{\partial p}(\gamma_{x,t}(\tau),d_xU_\psi(\gamma_{x,t}(\tau),\tau ))-\frac{\partial H}{\partial p}(\gamma_{x,t}(\tau) , d_x u^-(\gamma_{x,t}(\tau) )) \Big|^2 \\
	 =&\, \Big|Q(\tau)\cdot\Big(d_xU_\psi(\gamma_{x,t}(\tau),\tau ) - d_x u^-(\gamma_{x,t}(\tau) )\Big)\Big|^2 \\
	 \geqslant &\, \beta^2  \Big|dist( \Phi_{H,\lambda}^{\tau} (x,p_x) ,\cT_\omega )\Big|^2 \\
	 \geqslant &\, \beta^2 C_4^2(\psi,L,\lambda) e^{-2\lambda \tau}  dist^2( (x,p_x) ,\cT_\omega ) \\
	 \geqslant &\, \frac{\beta^2   C_4^2(\psi,L,\lambda) e^{-2\lambda \tau}}{l^2+1}   |p_x-d_xu^-|^2 
\end{split}
\ee
with
\[
Q(\tau):=\int_0^1 \frac{\partial^2 H}{\partial p^2}\Big(\gamma_{x,t}(\tau) ,\sigma d_xU_\psi(\gamma_{x,t}(\tau),\tau )+(1-\sigma)d_x u^-(\gamma_{x,t}(\tau) )  \Big) d\sigma. 
\]
Due to (H1) and item (2) of Proposition \ref{prop-1}, $Q(\tau)$ is uniformly positive definite 
for $\tau\in(-t_0,0)$, namely $Q(\tau)\geqslant \beta Id_{n\times n}$ for some constant $\beta>0$. The second and third inequality of \eqref{eq:estimate 2 curve speed} is due to 
\eqref{eq:acctor1} and \eqref{eq:attactor2} respectively. Taking \eqref{eq:estimate 2 curve speed} into \eqref{eq:estimate l} we get 
\be\label{eq:act est}
\begin{split}
	 2  C_1(\psi,L ) e^{- \lambda t} \geqslant&\, \int^{0}_{- t} e^{\lambda \tau }  l ( \gamma_{x,t}(\tau),\dot \gamma_{x,t}(\tau))\ d \tau \\
	 \geqslant &\, \alpha_0 \int^{0}_{-t_0} e^{\lambda \tau }  \big|\dot \gamma_{x,t}(\tau)- \dot \eta_{x,t} (\tau)\big|^2\ d \tau  \\
	 \geqslant &\, \frac{\alpha_0  \beta^2   C_4^2(\psi,L,\lambda) }{l^2+1} |p_x-d_x u^-(x)|^2 \int_{-t_0}^0   e^{-\lambda \tau}  d\tau \\
	 =&\,\frac{\alpha_0 \beta^2   C_4^2(\psi,L,\lambda) }{\lambda l^2+\lambda}  (e^{\lambda t_0} -1) |p_x-d_x u^-(x)|^2.
\end{split}
\ee
Taking account of  \eqref{eq:t0}, previous \eqref{eq:act est} implies the existence of a constant $C_3(\psi,L,\lb)>0$ (depending only on $\psi$, $L$ and $\lb$) such that  
$$
|p_x-d_xu^-|\leqslant C_3(\psi,L,\lb)  e^{-\lambda t}, \quad \forall \ t\geqslant s_0.
$$
so our claim \eqref{claim lambda} get proved. Finally, 
\ben
\|\cT_t^-\psi(x)-u^-_\om(x)\|_{W^{1,\infty}}&=&\max_{x\in\T^n}d_H \bigg( \Pi_x D^+\cT_{t}^-\psi(x), P(x)   \bigg) +\|\cT_t^-\psi(x)-u^-_\om(x)\|_{L^{\infty}}\\
      &=&\max_{x\in\T^n}\max_{p_x\in \Pi_x D^*U_{\psi}(x,t) }  \big| p_x-d_xu^-  \big| +C_1(\psi, L)e^{-\lb t}\\
      &\leqslant& (C_3+C_1) e^{-\lambda t}.
\een
where $C_1(\psi,L,\lambda)$ is given by item (6) of Proposition \ref{prop-2}. By taking $C=C_3+C_1$ we get the main conclusion of Theorem \ref{thm:2}.\qed

\medskip

\section{Persistence of KAM torus as invariant Lagrangian graph}\label{a1}

\vspace{10pt}

This section is devoted to prove Theorem \ref{thm:3}. Firstly, the following notions and conclusions are needed:

\begin{defn}[Aubry Set]\label{defn:aubry}
$\gm\in C^{ac}(\R,\T^n)$ is called {{\sf globally calibrated}} by $u^-$, if for any $a\leqslant b\in\R$,
\[
e^{\lb b}u^-(\gm(b))-e^{\lb a} u^-(\gm(a))=\int_a^be^{\lb t}L(\gamma(t),\dot\gamma(t))dt.
\]
The {\sf Aubry set} $\wt \cA$ is an $\Phi_{L,\lb}^t-$invariant set defined by
\[
\wt\cA=\bigcup_{\gamma}\,\,\{(\gamma, \dot\gamma)|\gamma \text{ is globally calibrated by }u^-\}\subset T\T^n
\]
and the {\sf projected Aubry set} can be defined by $\cA=\pi\wt\cA\subset\T^n$, where $\pi:(x,p)\in T\T^n\rightarrow x\in\T^n$ is the standard projection.
\end{defn}
\begin{prop}\cite{MS}
 $\pi^{-1}:\cA\rightarrow TM$ is a Lipschitz graph.
\end{prop}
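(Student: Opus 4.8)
The plan is to reduce everything to a single local estimate: for any two elements $(x_0,v_0),(x_1,v_1)\in\wt\cA$ whose base points are close, I want to prove
\[
|v_0-v_1|\leqslant C\,|x_0-x_1|
\]
for a uniform constant $C$. This one inequality does double duty. Setting $x_0=x_1$ forces $v_0=v_1$, which is exactly the single-valuedness needed for $\pi^{-1}$ to be a graph; and the inequality itself is the Lipschitz bound. Thus I do not need to separately establish differentiability of $u^-$ on $\cA$ — the graph property falls out of the same estimate. The estimate will come from a Mather-type crossing comparison, playing off the calibration identity for the genuine calibrated curves against the domination inequality for cleverly chosen competitors.

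Concretely, let $\gamma_0,\gamma_1:\R\to\T^n$ be globally calibrated curves with $(\gamma_i(0),\dot\gamma_i(0))=(x_i,v_i)$, and restrict attention to a fixed short interval $[-\eps,0]$. Lifting to $\R^n$ (legitimate since $x_0,x_1$ are close) I build crossed competitors $\alpha=\gamma_0+\phi\,(\gamma_1-\gamma_0)$ and $\beta=\gamma_1+\phi\,(\gamma_0-\gamma_1)$ with an interpolation $\phi$ satisfying $\phi(-\eps)=0$ and $\phi(0)=1$, so that $\alpha$ runs from $\gamma_0(-\eps)$ to $x_1$ and $\beta$ runs from $\gamma_1(-\eps)$ to $x_0$. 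Applying the calibration identity (item (3) of Proposition~\ref{prop-2}) to $\gamma_0,\gamma_1$ and the domination inequality (item (2) of Proposition~\ref{prop-2}) to $\alpha,\beta$, and then adding the four relations, every boundary value of $u^-$ cancels, leaving the purely Lagrangian inequality
\[
\int_{-\eps}^0 e^{\lb\tau}\big[L(\gamma_0,\dot\gamma_0)+L(\gamma_1,\dot\gamma_1)\big]\,d\tau\ \leqslant\ \int_{-\eps}^0 e^{\lb\tau}\big[L(\alpha,\dot\alpha)+L(\beta,\dot\beta)\big]\,d\tau.
\]
Because the velocities involved are uniformly bounded (items (2) of Proposition~\ref{prop-1} and (4) of Proposition~\ref{prop-2}), all these curves stay in a fixed compact part of $T\T^n$, so $L$ and its derivatives are uniformly controlled there.

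The crux is the second-order expansion of the right-minus-left difference. Taylor expanding $L$ about $(\gamma_i,\dot\gamma_i)$, the first-order contributions reassemble into the derivative of the interpolation and integrate to a term bounded by $\tfrac{C}{\eps}|x_0-x_1|^2$, while the second-order contribution, via the uniform convexity $L_{vv}\geqslant\theta\,\mathrm{Id}$ on the compact velocity range (equivalent to (H1) through the Legendre transform), yields a strictly negative term of order $-c\,\eps\,|v_0-v_1|^2$. Nonnegativity of the difference then gives $c\,\eps\,|v_0-v_1|^2\leqslant \tfrac{C}{\eps}|x_0-x_1|^2$, i.e. the desired local estimate with $\eps$ fixed; the factor $e^{\lb\tau}$ is bounded on $[-\eps,0]$ and only absorbed into constants. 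I expect this organization of the expansion — ensuring the coercive $|v_0-v_1|^2$ gain survives against the cross terms produced by the interpolation — to be the main obstacle. Finally, I upgrade from local to global: with $\eps$ fixed the local Lipschitz constant is uniform, and for base points that are far apart the uniform velocity bound $|v|\leqslant K$ gives $|v_0-v_1|\leqslant 2K\leqslant (2K/\delta)\,|x_0-x_1|$. Hence $x\mapsto v(x)$ is globally Lipschitz on $\cA$, so $\wt\cA=\pi^{-1}(\cA)$ is a Lipschitz graph, as claimed.
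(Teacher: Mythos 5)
Your crossing comparison is set up on the wrong time interval, and this is fatal. You only ever use $\gamma_0,\gamma_1$ on $[-\eps,0]$, i.e.\ only their \emph{backward} calibration. But then, when $x_0=x_1=x$, your competitor $\alpha$ joins $\gamma_0(-\eps)$ to $x$ --- exactly the endpoints of $\gamma_0$ --- and $\beta$ joins $\gamma_1(-\eps)$ to $x$, exactly the endpoints of $\gamma_1$. Since calibration plus domination (items (2),(3) of Proposition 2.3) say precisely that each $\gamma_i$ minimizes the discounted action among curves with its own endpoints, your inequality
\[
\int_{-\eps}^0 e^{\lb\tau}\bigl[L(\gamma_0,\dot\gamma_0)+L(\gamma_1,\dot\gamma_1)\bigr]\,d\tau\ \leqslant\ \int_{-\eps}^0 e^{\lb\tau}\bigl[L(\alpha,\dot\alpha)+L(\beta,\dot\beta)\bigr]\,d\tau
\]
holds \emph{term by term, trivially}, for arbitrary $v_0\neq v_1$; it cannot force $v_0=v_1$, and by continuity it carries no coercive information for $x_0$ close to $x_1$ either. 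The claimed expansion is also not right: since each $\gamma_i$ is an extremal, the first variation of its action under a variation vanishing at $\tau=-\eps$ reduces to the boundary term $\pm\langle L_v(x_i,v_i),\,x_1-x_0\rangle$, which vanishes when $x_0=x_1$, and the second-order term is \emph{nonnegative} (you are perturbing around a minimizer of a convex-in-velocity action), so no gain of the form $-c\,\eps\,|v_0-v_1|^2$ ever appears. A quick sanity check that the approach must fail: you never use the defining property of the Aubry set (calibration on all of $\R$), only backward calibration, and backward calibrated curves end at \emph{every} point of $\T^n$ (item (3) of Proposition 2.3); if your argument were correct it would prove that $u^-$ has a Lipschitz gradient everywhere, contradicting the fact that the viscosity solution of \eqref{eq:sta-hj} is in general not $C^1$.

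The missing idea is that the crossing must exchange the two \emph{futures} (or pasts) across the meeting time, which is exactly where global calibration enters. Take $\gamma_0,\gamma_1$ calibrated on $[-\eps,\eps]$, keep them unchanged on $[-\eps,0]$, and on $[0,\eps]$ interpolate so that competitor $1$ ends at $\gamma_1(\eps)$ and competitor $2$ at $\gamma_0(\eps)$; calibration on both halves plus domination for the two competitors again cancels all the $u^-$ boundary terms, but now, if $x_0=x_1$ while $v_0\neq v_1$, the exchanged curves have corners at $\tau=0$, and strict convexity of $L$ in $v$ (H1) lets a smooth interpolation strictly beat them --- this is where the quadratic gain in $|v_0-v_1|$ actually comes from (Mather's crossing lemma, adapted to the discounted action). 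Note also that the paper itself offers no proof of this proposition: it is quoted directly from \cite{MS}, so there is no internal argument to compare with; the proof there follows the two-sided crossing just described (equivalently, a sandwich argument: $u^-$ is semiconcave everywhere and admits smooth lower supports along globally calibrated curves, forcing differentiability with Lipschitz differential on $\cA$).
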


\begin{prop}[Upper semicontinuity]\label{lem:u-semi}
As a set-valued function defined on $ C^{r\geqslant 2}(T\T^n,\R)$,
\[
\wt\cA:\big\{C^{r\geqslant 2}(T\T^n,\R),\|\cdot\|_{C^r}\big\}\longrightarrow \big\{T\T^n,d_H\big\}
\]
is upper semicontinuous.
\end{prop}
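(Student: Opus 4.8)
The plan is to argue by contradiction, combining the compactness of calibrated curves with the stability of the discounted viscosity solution under $C^r$ perturbations of the Lagrangian. Write $\wt\cA_L$ for the Aubry set associated with the Lagrangian $L$ (equivalently, with the Hamiltonian $H=\cL_H(L)$). Suppose upper semicontinuity fails at some $L_0$: then there exist $\eps>0$, a sequence $L_k\to L_0$ in the $C^r$ topology, and points $(x_k,v_k)\in\wt\cA_{L_k}$ with $\mathrm{dist}\big((x_k,v_k),\wt\cA_{L_0}\big)\geqslant\eps$ for all $k$. By Definition \ref{defn:aubry}, each $(x_k,v_k)$ lies on a curve $\gm_k\in C^{ac}(\R,\T^n)$ that is globally calibrated by the discounted solution $u^-_{L_k}$, with $\gm_k(0)=x_k$ and $\dot\gm_k(0)=v_k$.

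First I would establish uniform a priori estimates. Since $L_k\to L_0$ in $C^r$, the family $\{L_k\}$ is uniformly Tonelli on a $C^r$-neighborhood of $L_0$, so the precompactness estimate of item (4) in Proposition \ref{prop-2} can be made uniform in $k$: there is a constant $K$, independent of $k$, with $|\dot\gm_k(s)|\leqslant K$ for all $s\in\R$. Each $\gm_k$ solves the Euler--Lagrange equation (E-L) for $L_k$; bootstrapping this bounded second-order ODE with $C^{r-1}$ coefficients yields uniform bounds on $\ddot\gm_k$ (and further derivatives) on every compact time interval. By Arzel\`a--Ascoli and a diagonal argument over the intervals $[-T,T]$ with $T\in\N$, a subsequence (not relabelled) satisfies $\gm_k\to\gm_0$ in $C^1_{loc}(\R,\T^n)$, and the limit $\gm_0$ solves the Euler--Lagrange equation for $L_0$.

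Next I would show that the discounted solutions converge, $u^-_{L_k}\to u^-_{L_0}$ uniformly on $\T^n$. This follows from the variational representation in item (1) of Proposition \ref{prop-2}: the functional $\gm\mapsto\int_{-\infty}^0 e^{\lb\tau}L(\gm,\dot\gm)\,d\tau$ depends on $L$ in a way controlled by $\|L_k-L_0\|_{C^0}$ on the (uniformly velocity-bounded) competitor curves, while the exponential weight $e^{\lb\tau}$ renders the tails negligible; comparing the infima for $L_k$ and $L_0$ then gives $\|u^-_{L_k}-u^-_{L_0}\|_{C^0}\to0$. With this in hand, I would pass to the limit in the global calibration identity
\[
e^{\lb b}u^-_{L_k}(\gm_k(b))-e^{\lb a}u^-_{L_k}(\gm_k(a))=\int_a^b e^{\lb t}L_k(\gm_k,\dot\gm_k)\,dt,\qquad a\leqslant b,
\]
using the uniform convergences $u^-_{L_k}\to u^-_{L_0}$, $\gm_k\to\gm_0$, $\dot\gm_k\to\dot\gm_0$ and $L_k\to L_0$. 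This exhibits $\gm_0$ as globally calibrated by $u^-_{L_0}$, whence $(\gm_0(0),\dot\gm_0(0))\in\wt\cA_{L_0}$. But $(x_k,v_k)=(\gm_k(0),\dot\gm_k(0))\to(\gm_0(0),\dot\gm_0(0))$, contradicting $\mathrm{dist}\big((x_k,v_k),\wt\cA_{L_0}\big)\geqslant\eps$.

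I expect the main obstacle to be the \emph{uniformity} of the estimates and of the convergence $u^-_{L_k}\to u^-_{L_0}$: both the velocity bound $K$ and the comparison of the infinite-horizon variational integrals must hold uniformly over an entire $C^r$-neighborhood of $L_0$, not merely for one fixed Lagrangian. Once that uniformity is secured, the Arzel\`a--Ascoli extraction and the passage to the limit in the calibration identity are routine.
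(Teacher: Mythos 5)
Your proposal is correct, and it shares the overall skeleton of the paper's argument (extract a limit of Aubry curves of $L_k$ via uniform velocity/acceleration bounds and Arzel\`a--Ascoli, then show the limit curve belongs to $\wt\cA_{L_0}$), but the mechanism by which you place the limit curve in $\wt\cA_{L_0}$ is genuinely different. The paper never discusses convergence of the discounted solutions: it uses that each Aubry curve $\gm_n$ is an endpoint-fixed \emph{minimizer} of the discounted action (a consequence of calibration plus domination), constructs competitor curves $\eta_n$ approximating an arbitrary competitor $\eta$ for the limit curve $\gm_*$, and passes the minimality inequality to the limit, concluding that $\gm_*$ minimizes $h_\lb^{s-t}(\gm_*(t),\gm_*(s))$ for all $t<s$ and hence lies in $\cA(L_0)$. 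You instead prove an auxiliary stability lemma, $\|u^-_{L_k}-u^-_{L_0}\|_{C^0}\to 0$ (which is indeed easy in the discounted setting: both infima in the representation formula are attained on curves with a $k$-uniform speed bound, and the weight $e^{\lb\tau}$ kills the tails, so the difference of the value functions is controlled by $\sup_{|v|\leqslant K}|L_k-L_0|/\lb$), and then pass the calibration identity itself to the limit. Each route buys something: the paper's avoids any statement about the $u^-$'s, but its final step silently invokes the equivalence between globally minimizing curves and curves calibrated by $u^-$, which is not proved (nor stated) in the paper and does not literally match Definition~\ref{defn:aubry}; your route costs one extra (but elementary) stability lemma and in exchange lands exactly on the paper's definition of $\wt\cA$ via calibration, so it is in that respect more self-contained. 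The only point you should make fully rigorous is the one you flagged yourself: the uniformity in $k$ of the Tonelli constants and of the speed bound $K$ over a $C^r$-neighborhood of $L_0$, which is what legitimizes both the compactness extraction and the comparison of the infinite-horizon infima.
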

\begin{proof}
It suffices to prove that for any $L_n$ accumulating to $L$ w.r.t. the $\|\cdot\|_{C^r}-$norm as $n\rightarrow +\infty$, any accumulating curve of 
$\{\gamma_n\in\cA(L_n)\}$ would be contained in  $\cA(L)$.

Due to item (5) of Proposition \ref{prop-2}, for any $L_n$ converging to $L$ w.r.t. the $\|\cdot\|_{C^r}-$norm, $\wt\cA(L_n)$ is uniformly compact in the phase space. Therefore, for any sequence $\{\gamma_n\}$ globally calibrated by $L_n$, any accumulating curve $\gamma_*$ has to satisfy
\[
\int_t^se^{\lb\tau}L(\gamma_*,\dot\gamma_*)d\tau= \lim_{n\rightarrow+\infty}\int_t^se^{\lb\tau}L(\gamma_n,\dot\gamma_n)d\tau=\lim_{n\rightarrow+\infty}\int_t^se^{\lb\tau}L_n(\gamma_n,\dot\gamma_n)d\tau
\]
for any $t<s\in\R$. On the other side, for any $\eta\in C^{ac}([t,s],M)$ satisfying $\gamma_*(s)=\eta(s)$ and $\gamma_*(t)=\eta(t)$, we can find $\eta_m\in C^{ac}([t_n,s_n],M)$ ending with $\gamma_n(s_n)=\eta_n(s_n)$ and $\gamma_n(t_n)=\eta_n(t_n)$ for some sequence $\{\gamma_n\in\cA(L_n)\}_{n\in\N}$ with
$[t_n,s_n]\subset [t,s]$ for all $n\in\N$, 
\[
\lim_{n\rightarrow+\infty}t_n=t \ \text{(resp. $\lim_{n\rightarrow+\infty}s_n=s$)}
\]
 and $\eta_n\rightarrow \eta$ uniformly on $[t,s]$ as $m\rightarrow+\infty$. Since $\gamma_n\in\cA(L_n)$ for all $n\in\N$, then 
\[
\int_{t_n}^{s_n}e^{\lb\tau}L_n(\eta_n,\dot\eta_n)d\tau\geqslant \int_{t_n}^{s_n}e^{\lb\tau}L_n(\gamma_n,\dot\gamma_n)d\tau.
\]
Combining these conclusions we get 
\ben
\int_t^se^{\lb\tau}L(\gamma_*,\dot\gamma_*)d\tau&=& \lim_{n\rightarrow+\infty}\int_t^se^{\lb\tau}L(\gamma_n,\dot\gamma_n)d\tau\\
&= &\lim_{n\rightarrow+\infty}\int_{t_n}^{s_n}e^{\lb\tau}L_n(\gamma_n,\dot\gamma_n)d\tau\\
&\leqslant & \lim_{n\rightarrow+\infty}\int_{t_n}^{s_n}e^{\lb\tau}L_n(\eta_n,\dot\eta_n)d\tau\\
&=&\int_t^se^{\lb\tau}L(\eta,\dot\eta)d\tau,
\een
which indicates $\gamma_*:\R\rightarrow M$ minimizes $h_\lb^{s-t}(\gamma_*(t),\gamma_*(s))$ for any $t<s\in\R$. So  $\gamma_*\in\cA(\lb_*)$.
\end{proof}

\bigskip

\noindent{\it Proof of Theorem \ref{thm:3}:}
Suppose $\cT$ is a KAM torus of \eqref{eq:ode0} associated with $H(x,p)$ (no constraint on the frequency).
Due to the Invariant {Manifold Theorem} \cite{HPS}, for system 
\[
H_\eps(x,p)=H(x,p)+\eps H_1(x,p),\quad0<\eps\leqslant \eps_0(H)\ll1,
\]
 the perturbed invariant graph $\cT^\eps$ is still normally hyperbolic, as long as the constant $\eps_0(H)$ is sufficiently small. 
 

Recall that $\cT=\cL_H^{-1}(\wt\cA(H))$ (see Definition \ref{defn:aubry} for $\wt\cA$). Due to the upper semi-continuity of the Aubry set $\wt\cA$ (see Lemma \ref{lem:u-semi}), the Hausdorff distance between $\wt\cA(H)$ and $\wt\cA(H_\eps)$ can be sufficiently small as long as $\eps_0\ll1$, which implies $\wt\cA(H_\eps)\subset\cL_{H_\eps}(\cT^\eps)$ (but may not be equal).

Suppose $u^-_\eps(x)$ is the unique viscosity solution of (\ref{eq:sta-hj}) for system $H_\eps$, then it's differentiable a.e. $x\in\T^n$. For any differentiable point $x\in\T^n$,  $(x,d_x u_\eps^-)$ decides a unique backward orbits which tends to $\wt\cA^\eps$ as $t\rightarrow-\infty$ (shown in Proposition \ref{prop-2}), so $(x,d_x u_\eps^-)\in\cT^\eps$. Then $Graph(d_xu_\eps^-)$ coincides with $\cT^\eps$ for a.e. $x\in\T^n$. Since $\cT^\eps$ is graphic and at least $C^1-$smooth, then $u_\eps^-$ is actually a classic solution of (\ref{eq:sta-hj}) for system $H_\eps$ and then $Graph(d_xu_\eps^-)=\cT^\eps$, which implies $\cT^\eps$ is exact, therefore has to be  Lagrangian.\qed


\vspace{20pt}

\appendix

\section{Normal hyperbolicity of the KAM torus for CSTMs}\label{a2}
\vspace{10pt}

In this Appendix, we show the normal hyperbolicity of the KAM torus for conformally symplectic system (\ref{eq:ode0}). This conclusion was firstly proved in \cite{CCD1,CCD}. 
Nonetheless, we reprove it here for the consistency.

\begin{prop}[local attractor\cite{CCD1,CCD}]\label{prop:local-stab}
The KAM torus $\cT_\om$ is a normally hyperbolic $\Phi_{H,\lambda}^t$-invariant manifold, consequently, there exists a suitable neighborhood $\cU$ of it such that $\cT_\om$ is the $\om-$limit set of any point $x\in\cU$.
\end{prop}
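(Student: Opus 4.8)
The plan is to establish normal hyperbolicity by computing the variational equation along $\cT_\om$ in coordinates adapted to the graph, and then reading off the transverse contraction rate from the conformally symplectic structure. Conceptually, the linearized flow $D\Phi_{H,\lb}^t$ inherits the conformal relation of \eqref{eq:ode0}, so its multipliers occur in reciprocal pairs up to the conformal factor, i.e. as $(\mu,\,e^{-\lb t}/\mu)$; the $n$ directions tangent to $\cT_\om$ carry neutral multipliers, since the motion there is conjugate to the rotation $\rho_\om^t$, so their partners --- the $n$ transverse directions --- are forced to contract at rate $e^{-\lb t}$. The body of the proof makes this quantitative and, more importantly, uniform.

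First I would fix the graph description $\cT_\om=\{(x,P(x))\}$ with $P=d_{x}u^-$ (Theorems \ref{thm:exact}, \ref{thm:lan} and \ref{cor:kam-sol}), so that $dP=d^2_{x}u^-$ is symmetric, and introduce the transverse coordinate $q:=p-P(x)$. Writing out \eqref{eq:ode0} in $(x,q)$ and using the invariance identity $dP(x)\,H_p(x,P(x))=-H_x(x,P(x))-\lb P(x)$, which forces $\dot q\equiv 0$ on $\{q=0\}$, the variational equation along a trajectory on the torus becomes block upper-triangular,
\[
\begin{pmatrix}\delta\dot x\\ \delta\dot q\end{pmatrix}=\begin{pmatrix}A & H_{pp}\\ 0 & D\end{pmatrix}\begin{pmatrix}\delta x\\ \delta q\end{pmatrix},\qquad A=H_{px}+H_{pp}\,dP,\quad D=-\lb\,\mathrm{Id}-H_{px}^{\top}-dP\,H_{pp}.
\]
The vanishing of the lower-left block is precisely the invariance of $T\cT_\om=\{\delta q=0\}$. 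Exploiting the symmetry of $dP$ and of $H_{pp}$ I would then record the key algebraic identity $D=-\lb\,\mathrm{Id}-A^{\top}$, which is the linearized form of the conformal relation above.

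Next I would integrate the two diagonal blocks. The tangent block is nothing but $A=DV$ for the base field $V(x):=H_p(x,P(x))$, whose flow is conjugate to the rotation, $\phi_V^t=\zeta\circ\rho_\om^t\circ\zeta^{-1}$; hence its fundamental matrix $\Phi_A(t)=D\zeta(\zeta^{-1}(x)+\om t)\,D\zeta(\zeta^{-1}(x))^{-1}$ is bounded above and below uniformly in $x$ and $t$, since $\zeta$ is a $C^1$ diffeomorphism of the compact $\T^n$. From $D=-\lb\,\mathrm{Id}-A^{\top}$ one gets $\Phi_D(t)=e^{-\lb t}\big(\Phi_A(t)^{\top}\big)^{-1}$, whence $\|\Phi_D(t)\|\leqslant M e^{-\lb t}$ with $M:=\sup\|D\zeta\|\cdot\sup\|D\zeta^{-1}\|$. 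Thus the transverse (quotient) dynamics contracts uniformly at rate $e^{-\lb t}$ while the tangential dynamics stays uniformly neutral; the resulting spectral gap $-\lb<0$ makes $\cT_\om$ an $r$-normally hyperbolic (in fact normally attracting, since $E^u=0$) invariant manifold for every $r$. Invoking the Invariant Manifold Theorem \cite{HPS} then produces the invariant transverse bundle together with a neighborhood $\cU$ inside the basin of $\cT_\om$, so that every $x\in\cU$ converges to $\cT_\om$ in forward time; minimality of $\rho_\om^t$ finally forces the $\om$-limit set of each such $x$ to be all of $\cT_\om$.

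The main obstacle is not the algebra but the passage from asymptotic (Lyapunov-exponent) rates to the uniform exponential estimates that normal hyperbolicity actually requires: the eigenvalue-pairing heuristic alone controls only exponents. This is exactly where the rotational structure is essential --- it is the conjugacy $\phi_V^t=\zeta\circ\rho_\om^t\circ\zeta^{-1}$, with $D\rho_\om^t=\mathrm{Id}$, that keeps $\Phi_A(t)$ and $\Phi_A(t)^{-1}$ uniformly bounded and thereby turns $D=-\lb\,\mathrm{Id}-A^{\top}$ into a genuine uniform contraction. A secondary point requiring care is the regularity bookkeeping: $u^-\in C^2$, hence $dP\in C^0$, is just enough to form the variational equation, and one should verify that $M$ and the gap are uniform over the torus before handing the estimate to \cite{HPS}.
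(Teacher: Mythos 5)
Your strategy --- graph coordinates $q=p-P(x)$, a block-triangular variational equation along the torus, and the identity $D=-\lb\,\mathrm{Id}-A^{\top}$ --- is essentially the infinitesimal (generator-level) version of what the paper does at the level of the time-one map. The paper builds the frame $M(\theta)=\big(DK(\theta)\big|V(\theta)\big)$, uses the conformal relation $\big(D\Phi_{H,\lb}^1\big)^{\top}J\,D\Phi_{H,\lb}^1=e^{-\lb}J$ together with the Lagrangian property of $T\cT_\om$ to show that $D\Phi_{H,\lb}^1M(\theta)$ equals $M(\theta+\om)$ times an upper-triangular matrix with diagonal blocks $\mathrm{Id}$ and $e^{-\lb}\mathrm{Id}$, and then straightens the stable bundle by solving $B(\theta)-e^{-\lb}B(\theta+\om)=-S(\theta)$, which has no small-divisor problem. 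Both arguments hinge on exactly the two facts you isolate: neutrality of the tangential dynamics (uniform, via the conjugacy to the rotation) and the conformal pairing forcing the transverse rate $e^{-\lb}$.

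There is, however, a genuine gap in your derivation: under the regularity actually available, your variational equation cannot be formed. The torus is only $C^1$-graphic, i.e. $P=d_xu^-\in C^1$ and $dP\in C^0$. Writing $\delta\dot q=D\,\delta q$ with vanishing lower-left block requires computing $\tfrac{d}{dt}\big[dP(x(t))\big]$ along a torus trajectory --- equivalently, differentiating the invariance identity $dP\,H_p=-H_x-\lb P$ in $x$ --- and this needs $d^2P=d^3u^-$, which need not exist. Your closing remark that ``$dP\in C^0$ is just enough to form the variational equation'' is precisely the false step; with $P$ merely $C^1$ the vector field in $(x,q)$ coordinates is not $C^1$, so neither block of your matrix ODE is defined. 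The estimate you are after, $\Phi_D(t)=e^{-\lb t}\big(\Phi_A(t)^{\top}\big)^{-1}$, is correct, but it must be obtained without differentiating $dP$: work directly with $D\Phi_{H,\lb}^t$ (which exists and is continuous since $H\in C^r$, $r\geqslant 2$), observe that it preserves the $C^0$ bundle $E^c=T\cT_\om$ and hence acts on the vertical quotient, and then use the conformal relation on pairs consisting of a tangent and a vertical vector: since $E^c$ is Lagrangian, $\om$ restricts to a nondegenerate pairing between $E^c$ and the quotient, so $\om(D\Phi^t\xi,D\Phi^t\eta)=e^{-\lb t}\om(\xi,\eta)$ plus the uniform bounds on $\Phi_A(t)^{\pm1}$ yields the uniform $e^{-\lb t}$ contraction of the quotient. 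This map-level argument is exactly what the paper's frame computation accomplishes, and it is the reason the paper's proof survives with only $C^1$ data. (A last, smaller point: your appeal to minimality of $\rho_\om^t$ to conclude that the $\om$-limit set of every $x\in\cU$ is all of $\cT_\om$ tacitly assumes $\om$ is irrational; for resonant $\om$ the limit set may be a proper subtorus, an issue the paper's statement also glosses over.)
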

\begin{proof}
Since the KAM torus $\cT_\om$ is $\Phi_{H,\lambda}^t$-invariant, so we just need to prove its normal hyperbolicity w.r.t. $\Phi_{H,\lb}^1$. The generalization from $\Phi_{H,\lb}^1$ to $\Phi_{H,\lb}^t$ is straightforward. Due to (\ref{diam-com}), we know $\Phi_{H,\lb}^1\circ K(\theta)=K(\theta+\om)$, which implies
\[
D\Phi_{H,\lb}^1(K(\theta))\partial_{i} K(\theta)=\partial_i K(\theta+\om),\quad\forall \theta\in\T^n,\ i=1,2,\cdots,n .
\]
Therefore, $\partial_i K(\theta)\in T_{K(\theta)}T^*\T^n$ is an eigenvector of $D\Phi_{H,\lb}^1(\cdot)$ of the eigenvalue $1$. As $\{K(\theta)|\theta\in\T^n\}$ is a Lagrangian graph, i.e.
\[
\Om( \partial_i K(\theta),\partial_j K(\theta))=0,\quad\forall \theta\in\T^n,\  i,j=1,\cdots,n,
\]
so we have
\[
span_{i=1,\cdots,n}\{\partial_i K(\theta)\oplus J\partial_i K(\theta)\}=T_{K(\theta)}T^*\T^n.
\]
Formally for the matrix
\[
V(\theta)=J_{2n\times 2n}^tDK(\theta)\Big(DK^{t}(\theta)\cdot DK(\theta)\Big)^{-1}
\]
with $DK(\theta)=(\partial_1 K(\theta),\cdots,\partial_n K(\theta))$, we have
\ben
D\Phi_{H,\lb}^1(K(\theta))V(\theta)=V(\theta+\om)\cdot A(\theta)+DK(\theta+\om)\cdot S(\theta)
\een
where
\[
A(\theta)=e^{-\lb}Id
\]
and
\[
S(\theta)=DK^{-1}(\theta+\om)\big[D\Phi_{H,\lb}^1(K(\theta))V(\theta)-e^{-\lb}V(\theta+\om)\big].
\]
This is because the conformally symplectic condition implies 
\[
\Big(D\Phi_{H,\lb}^1(K(\theta))\Big)^t J D\Phi_{H,\lb}^1(K(\theta))=e^{-\lb} J.
\]
Defining $M(\theta)$ by a $2n\times 2n$ matrix which is juxtaposed with $DK(\theta)$ and $V(\theta)$, i.e.
\[
M(\theta)=\bigg(DK(\theta)\Big|V(\theta)\bigg),
\]
we will see
\[
D\Phi_{H,\lb}^1(K(\theta))M(\theta)=M(\theta+\om)\begin{pmatrix}
  Id   &   S(\theta) \\
   0   &  e^{-\lb}Id
\end{pmatrix}_{2n\times 2n}.
\]
Recall that
\[
E^c:=\bigg\{\Big(K(\theta),D K(\theta)\Big)\bigg|\theta\in\T^n\bigg\}
\]
 is a $\Phi_{H,\lb}^1-$invariant subbundle of $T_{\cT_\om}T^*\T^n$, with the eigenvalue $1$. To find the other $\Phi_{H,\lb}^1-$ invariant subbundle, we assume there exists a $n\times n$ matrix $B(\theta)$ such that
\[
E^s:=\bigg\{\Big(K(\theta),D K(\theta)B(\theta)+V(\theta)\Big)\bigg|\theta\in\T^n\bigg\}
\]
is $\Phi_{H,\lb}^1-$invariant, then
\ben
D\Phi_{H,\lb}^1(K(\theta))(E^c|E^s)&=&D\Phi_{H,\lb}^1(K(\theta))M(\theta)\begin{pmatrix}
  Id    &  B(\theta)  \\
    0  &  Id
\end{pmatrix}\\
&=&M(\theta+\om)\begin{pmatrix}
  Id   &   S(\theta) \\
   0   &  e^{-\lb}Id
\end{pmatrix}\begin{pmatrix}
  Id    &  B(\theta)  \\
    0  &  Id
\end{pmatrix}\\
&=&\Big(D K(\theta+\om)\Big|D K(\theta+\om)B(\theta+\om)+V(\theta+\om)\Big)\begin{pmatrix}
  Id    &  -B(\theta+\om)  \\
    0  &  Id
\end{pmatrix}\cdot\\
& &\begin{pmatrix}
  Id   &   S(\theta) \\
   0   &  e^{-\lb}Id
\end{pmatrix}\begin{pmatrix}
  Id    &  B(\theta)  \\
    0  &  Id
\end{pmatrix}\\
&=&\Big(D K(\theta+\om)\Big|D K(\theta+\om)B(\theta+\om)+V(\theta+\om)\Big)U(\theta+\om)
\een
where
\[
U(\theta+\om)=\begin{pmatrix}
     Id &  -e^{-\lb}B(\theta+\om)+S(\theta)+ B(\theta) \\
   0   &  e^{-\lb}Id
\end{pmatrix}
\]
has to be diagonal. That imposes
\[
-e^{-\lb}B(\theta+\om)+S(\theta)+ B(\theta) =0,\quad\forall \theta\in\T^n.
\]
We can always find a suitable $B(\theta)$ solving this equation, since
%
 there is no small divisor problem and the regularity of $B(\cdot)$ keeps the same with $S(\cdot)$. So $E^s$ is indeed a $\Phi_{H,\lb}^1-$invariant subbundle with the eigenvalue $e^{-\lb}<1$.

Now we get an invariant splitting of $T_{\cT_\om}T^*\T^n$ by $E^c\oplus E^s$, with the eigenvalue $1$ and $e^{-\lb}$ respectively. Due to the Invariant Manifold Theorem \cite{F1,F2}, we can prove the normal hyperbolicty of $\cT_\om$ (which is actually normally compressible in the forward time). So $\cT_\om$ is a local attractor.
\end{proof}

\vspace{40pt}
\bibliographystyle{plainurl}
\bibliography{attractionKAM}

\begin{thebibliography}{10}

\bibitem{Ba}
Martino Bardi and Italo Capuzzo-Dolcetta.
\newblock {\em Optimal Control and Viscosity Solutions of
  Hamilton-Jacobi-Bellman Equations}.
\newblock 1997.
\newblock \href {https://doi.org/10.1007/978-0-8176-4755-1}
  {\path{doi:10.1007/978-0-8176-4755-1}}.

\bibitem{CCD1}
Renato Calleja, Alessandra Celletti, Rafael De, and Rafael De~la Llave.
\newblock Local behavior near quasi-periodic solutions of conformally
  symplectic systems.
\newblock {\em Journal of Dynamics and Differential Equations}, 25, 09 2013.
\newblock \href {https://doi.org/10.1007/s10884-013-9319-0}
  {\path{doi:10.1007/s10884-013-9319-0}}.

\bibitem{CCD}
Renato Calleja, Alessandra Celletti, and Rafael De~la Llave.
\newblock A kam theory for conformally symplectic systems: Efficient algorithms
  and their validation.
\newblock {\em Journal of Differential Equations}, 5(255):978--1049, 2013.
\newblock \href {https://doi.org/10.1016/j.jde.2013.05.001}
  {\path{doi:10.1016/j.jde.2013.05.001}}.

\bibitem{CANNARSA}
Piermarco Cannarsa and Carlo Sinestrari.
\newblock {\em Semiconcave functions, {H}amilton-{J}acobi equations, and
  optimal control}, volume~58 of {\em Progress in Nonlinear Differential
  Equations and their Applications}.
\newblock Birkh{\"a}user Boston, Inc., Boston, MA, 2004.

\bibitem{CC}
Alessandra Celletti and Luigi Chierchia.
\newblock Quasi-periodic attractors in celestial mechanics.
\newblock {\em Archive for Rational Mechanics and Analysis}, 191:311--345,
  2009.
\newblock \href {https://doi.org/10.1007/s00205-008-0141-5}
  {\path{doi:10.1007/s00205-008-0141-5}}.

\bibitem{DFIZ}
Andrea Davini, Albert Fathi, Renato Iturriaga, and Maxime Zavidovique.
\newblock Convergence of the solutions of the discounted {H}amilton-{J}acobi
  equation: convergence of the discounted solutions.
\newblock {\em Invent. Math.}, 206(1):29--55, 2016.
\newblock URL: \url{https://doi.org/10.1007/s00222-016-0648-6}.

\bibitem{D}
G.~Duffing.
\newblock {\em Erzwungene Schwingungen bei Ver{\"a}nderlicher Eigenfrequenz und
  ihre Technische Bedeutung}, volume 134.
\newblock 1918.

\bibitem{F1}
Neil Fenichel.
\newblock Asymptotic stability with rate conditions for dynamical systems.
\newblock {\em Bulletin of The American Mathematical Society - BULL AMER MATH
  SOC}, 80, 04 1974.
\newblock \href {https://doi.org/10.1090/S0002-9904-1974-13498-1}
  {\path{doi:10.1090/S0002-9904-1974-13498-1}}.

\bibitem{F2}
Neil Fenichel.
\newblock Asymptotic stability with rate conditions ii.
\newblock {\em Indiana University Mathematics Journal}, 26, 1977.
\newblock \href {https://doi.org/10.1512/iumj.1977.26.26006}
  {\path{doi:10.1512/iumj.1977.26.26006}}.

\bibitem{GH}
John Guckenheimer, John, Emily Holmes, and Philip.
\newblock {\em Nonlinear Oscillations, Dynamical Systems, and Bifurcations of
  Vector Fields}, volume~38.
\newblock Springer Verlag, New York, 1983.

\bibitem{P}
Poincar\'e H.
\newblock Les m\'ethodes nouvelles de la m\'ecanique c\'eleste.
\newblock {\em Paris, esp. Sec.}, 2, 1893.

\bibitem{HPS}
M.~W. Hirsch, C.~C. Pugh, and M.~Shub.
\newblock Invariant manifolds.
\newblock {\em Bull. Amer. Math. Soc.}, (76):1015--1019, 1970.

\bibitem{LC}
Patrice Le~Calvez.
\newblock Propri{\'e}t{\'e}s des attracteurs de birkhoff.
\newblock {\em Ergodic Theory and Dynamical Systems}, 8, 1988.
\newblock \href {https://doi.org/10.1017/S0143385700004442}
  {\path{doi:10.1017/S0143385700004442}}.

\bibitem{L}
P~Lions.
\newblock {\em Generalized solutions of Hamilton-Jacobi equations}, volume~69.
\newblock 1982.

\bibitem{MS}
Stefano Mar{\`o} and Alfonso Sorrentino.
\newblock Aubry-mather theory for conformally symplectic systems.
\newblock {\em Communications in Mathematical Physics}, 354:775--808, 2017.
\newblock \href {https://doi.org/10.1007/s00220-017-2900-3}
  {\path{doi:10.1007/s00220-017-2900-3}}.

\bibitem{Ma}
Jessica~Elisa Massetti.
\newblock Normal forms for perturbations of systems possessing a diophantine
  invariant torus.
\newblock {\em Ergodic Theory and Dynamical Systems}, pages 1--47, 12 2017.
\newblock \href {https://doi.org/10.1017/etds.2017.116}
  {\path{doi:10.1017/etds.2017.116}}.

\bibitem{Mat}
John~N. Mather.
\newblock Action minimizing invariant measures for positive definite
  {L}agrangian systems.
\newblock {\em Math. Z.}, 207(2):169--207, 1991.
\newblock URL: \url{https://doi.org/10.1007/BF02571383}.

\bibitem{MH}
F.~Moon and P.~Holmes.
\newblock A magnetoelastic strange attractor.
\newblock {\em Journal of Sound and Vibration}, 65:275--296, 1979.
\newblock \href {https://doi.org/10.1016/0022-460X(79)90520-0}
  {\path{doi:10.1016/0022-460X(79)90520-0}}.

\bibitem{M}
Martienssen V.O.
\newblock \"{U}ber das {F}undamentaltheorem in der {T}heorie der
  gew\"{o}hnlichen {D}ifferentialgleichungenuber neue, resonanzerscheinungen in
  wechselstromkreisen.
\newblock {\em Physik Zeitschrift-Leipz}, 11:448--460, 1910.

\bibitem{WL}
Maciej Wojtkowski and Carlangelo Liverani.
\newblock Conformally symplectic dynamics and symmetry of the lyapunov
  spectrum.
\newblock {\em Communications in Mathematical Physics}, 194(1):47--70, 1997.
\newblock \href {https://doi.org/10.1007/s002200050347}
  {\path{doi:10.1007/s002200050347}}.

\end{thebibliography}
\end{document}